\renewcommand{\S}{\mathcal{S}}
\newcommand{\SL}{\operatorname{SL}}
\newcommand{\R}{\mathbb{R}}
\newcommand{\C}{\mathbb{C}}
\newcommand{\SO}{\mathrm{SO}}
\newcommand{\SU}{\mathrm{SU}}
\newcommand{\U}{\mathrm{U}}
\newcommand{\st}{\big|\;}
\newcommand{\Sp}{\mathrm{Sp}}
\renewcommand{\dim}{\mathrm{dim}}
\newcommand{\vol}{\operatorname{vol}}
\newcommand{\abs}[1]{|#1|}
\renewcommand{\phi}{\varphi}
\newtheorem{theorem}{Theorem}[section]
\newtheorem*{theorem*}{Theorem}
\newtheorem{question}{Question}
\newtheorem*{question*}{Question}
\newtheorem*{conjecture*}{Conjecture}
\newtheorem*{proposition*}{Proposition}
\newtheorem*{corollary*}{Corollary}
\newtheorem{remark}[theorem]{Remark}
\newtheorem{example}[theorem]{Example}
\newtheorem{corollary}[theorem]{Corollary}
\newtheorem{proposition}[theorem]{Proposition}
\newtheorem{lemma}[theorem]{Lemma}
\newcommand{\eval}[1]{\bigg\rvert_#1}
\begin{document}

\author{Enric Solé-Farré}
\title{The Hitchin index in cohomogeneity one nearly Kähler structures}
\address{Department of Mathematics, University College London, London WC1E 6BT, United Kingdom}
\thanks{{\tt enric.sole-farre.21@ucl.ac.uk}}
\begin{abstract}
Nearly Kähler and Einstein structures admit a variational characterisation, where the second variation is associated to a strongly elliptic operator. This allows us to associate a Morse-like index to each structure. Our study focuses on how these indices behave under the assumption that the nearly Kähler structure admits a cohomogeneity one action. Specifically, we investigate elements of the index that also exhibit cohomogeneity one symmetry, reducing the analysis to an ODE eigenvalue problem. 

We apply our discussion to the two inhomogeneous examples constructed by Foscolo and Haskins. We obtain non-trivial lower bounds on the Hitchin index and Einstein co-index for the inhomogeneous nearly Kähler structure on $S^3\times S^3$, answering a question of Karigiannis and Lotay. 
\end{abstract}
\maketitle
\section{Introduction}
Consider a complete Riemannian six-manifold $(M^6, g)$ and its associated metric cone $C(M)= (M\times \R_+, dr^2 +r^2g)$. We are interested in the case where the cone has holonomy contained in $G_2$. This condition on the cone implies the existence of a pair of stable forms $(\omega, \rho) \in \Omega^2(M)\times \Omega^3(M)$ on $M$,  with $\operatorname{Stab}(\rho)\cong \SL(3, \mathbb{C})$, that satisfy the relations
\begin{align*} g &= \omega\left( \cdot, J_\rho \cdot\right) &\frac{1}{3!} \omega^3 = \frac{1}{4} \rho \wedge  \widehat{\rho} \;,  \\
    d\omega &= 3 \rho   &d^* \rho  = 4 \omega \;,~~~
\end{align*}
where $J_\rho$ is the almost complex structure induced by the stable 3-form $\rho$. 

The first two algebraic constraints imply that $(\omega, \rho)$ define an $\SU(3)$-structure on $M$. The first of which ensures that $ \omega$ is positive and of type $(1,1)$ with respect to $J_\rho$, particularly $\omega \wedge \rho =0$. The latter two fix the torsion of the $\SU(3)$-structure (cf. \cite{GH80}) and are called the nearly Kähler equations. A manifold carrying this structure is called a nearly Kähler manifold. Since manifolds with holonomy inside $G_2$ are Ricci-flat, nearly Kähler manifolds are always Einstein with constant $\lambda=5$.

Two variational characterisations of nearly Kähler structures are known. The first is due to Hitchin \cite{Hitchin00} \cite{Hitchin01}, and the second one was introduced by the author in \cite{ESF24h}. In there, we proved that the second variation of the functional can be identified with a strongly elliptic operator using the natural indefinite perfect pairing. In particular, it has a well-defined index, which we call the Hitchin index of the nearly Kähler structure. This index is a lower bound for the Einstein co-index and plays a crucial role in the deformation theory of $G_2$-conifolds, highlighting its importance.

Explicitly, the eigenspaces contributing to the Hitchin index are identified with solutions to the eigenvalue problem
\begin{equation} \label{1}
    \mathcal{E}(\nu)=  \left\{ \beta \in \Omega^2_8 \; \Bigr| ~d^* \beta =0, \; \Delta \beta= \nu \beta \right\}
\end{equation}
for $\nu \in (0,12)$, where $\Omega^2_8 \cong \{ \beta \in \Omega^2(M) \st \omega \wedge \beta =- * \beta \}$, smooth sections of the subbundle associated to the adjoint representation of $\SU(3)$ inside $\mathfrak{so}(6)$. In the language of almost hermitian geometry, these are primitive $(1,1)$ forms.  

Only six examples of simply connected nearly Kähler manifolds are known. Four homogeneous ones: 
 \begin{multicols}{2}
\begin{itemize}
    \item $(S^6, g_{round})= G_2/ \SU(3)$, 
    \item $\C P^3 = \Sp(2)/\U(1)\times \Sp(1)$ and
    \item $S^3 \times S^3 = \SU(2)^3/\triangle \SU(2)$,
    \item $\mathbb{F}_{1,2} = \SU(3)/T^2$.
\end{itemize}
\end{multicols}
The remaining two examples were constructed in 2016 by Foscolo and Haskins \cite{FH17}. They produced new examples of nearly Kähler structures on $S^6$ and $S^3\times S^3$ using cohomogeneity one methods.

By the work of Karigiannis and Lotay \cite{LK20} (cf. \cite{MS10}), all four homogeneous nearly Kähler structures are known to have Hitchin index zero. We study the index problem in the cohomogeneity one examples produced in \cite{FH17}. To tackle this, we look for solutions to the eigenvalue problem which are also cohomogeneity one. This assumption allows us to reduce the problem to a singular ODE eigenvalue problem. We prove
\begin{theorem*}[Thm. \ref{thmindS3S3}]
The Hitchin index of the inhomogeneous nearly Kähler structure in $S^3 \times S^3$ of Foscolo and Haskins is bounded below by one. Its Einstein co-index is bounded below by four.
\end{theorem*}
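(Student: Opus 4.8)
The plan is to produce explicit cohomogeneity one elements of the eigenspace $\mathcal{E}(\nu)$ with $\nu \in (0,12)$ on the Foscolo--Haskins nearly Kähler structure on $S^3\times S^3$, and then to count them, being careful about the multiplicity coming from the residual symmetry. First I would set up the cohomogeneity one framework: the action of $\SU(2)^2$ (with principal orbit $S^3\times S^3/\triangle S^2$, or the relevant isotropy data from \cite{FH17}) foliates $M$ by principal orbits over an interval, degenerating at the two endpoints. A cohomogeneity one primitive $(1,1)$-form $\beta$ is then determined by finitely many functions of the radial variable $t$, obtained by decomposing $\Omega^2_8$ restricted to a principal orbit into isotropy-irreducible summands; the invariant ones give the relevant finite-dimensional ODE system. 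I would write down this system explicitly, impose the coclosedness constraint $d^*\beta = 0$ (which is either automatic or cuts down the number of free functions), and reduce $\Delta\beta = \nu\beta$ to a Sturm--Liouville-type singular ODE eigenvalue problem on the interval, with the singular endpoint conditions dictated by smoothness where the orbit collapses.

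The second step is the spectral analysis of this singular ODE problem. Here I would argue that the problem is self-adjoint with discrete spectrum (the manifold is closed), and then locate at least one eigenvalue in the open interval $(0,12)$. The key inputs are: (i) an \emph{upper} bound via a test function — plug a carefully chosen smooth invariant $\beta$ (e.g.\ built from the intrinsic torsion forms or from $d$ of an invariant one-form, suitably projected to $\Omega^2_8$ and made coclosed) into the Rayleigh quotient for $\Delta$ and show the quotient is strictly less than $12$; and (ii) a \emph{lower} bound showing the relevant eigenvalue is strictly positive, i.e.\ that no nonzero coclosed invariant primitive $(1,1)$-form is harmonic — this should follow from $b^2(S^3\times S^3)=0$ together with the Weitzenböck/Hodge identification, so that $\Delta\beta = 0$ forces $\beta$ closed hence exact hence (by type considerations) zero. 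Combining (i) and (ii) with the min-max principle gives at least one eigenvalue strictly between $0$ and $12$, hence $\dim\mathcal{E}(\nu)\ge 1$ for some such $\nu$, which yields the Hitchin index bound $\ge 1$.

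For the Einstein co-index bound of four, the point is that the same eigenform is not invariant under the full isometry group of the Foscolo--Haskins metric: the triality/permutation symmetry of $S^3\times S^3$ acts on the space of cohomogeneity one structures and on $\mathcal{E}(\nu)$. I would identify the finite symmetry group $\Gamma$ (of order governed by the outer symmetries, expected to give an orbit of size three or four) acting on the solution space at the fixed eigenvalue $\nu$, and show that the $\Gamma$-orbit of the constructed eigenform, together with the relation between the Hitchin index and the Einstein co-index established in \cite{ESF24h}, spans a space of dimension at least four inside the Einstein co-index. Concretely, each Hitchin-index direction contributes to the Einstein co-index, and the geometric transformations produce additional independent negative directions for the Einstein Hessian not already counted; a dimension count of the $\Gamma$-representation generated by the eigenform gives the bound four.

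The main obstacle I expect is step (i): verifying that the Rayleigh quotient of an explicit invariant test form is genuinely below the threshold $12$. Because the Foscolo--Haskins metric is known only through the solution of a nonlinear ODE system without closed form, this bound cannot be checked by a symbolic computation; it will require either a rigorous numerical/interval-arithmetic estimate along the profile curve, or a clever choice of test form for which the relevant integrals collapse to quantities controlled purely by the boundary behaviour and the nearly Kähler structure equations $d\omega = 3\rho$, $d^*\rho = 4\omega$. Ensuring the test form is simultaneously smooth across both collapsing orbits, coclosed, and of pure type $(1,1)$-primitive is the delicate part; everything else (self-adjointness, discreteness, the vanishing-of-harmonics argument, and the symmetry bookkeeping for the co-index) is comparatively routine.
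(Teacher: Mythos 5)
Your overall setup (reduce to an invariant ODE eigenvalue problem on the orbit interval with smoothness conditions at the collapsing orbits) matches the paper's framework, but there are two genuine gaps. First, the core analytic step --- producing an eigenvalue strictly inside $(0,12)$ --- is exactly the part you leave open: your plan (i) requires a test form whose Rayleigh quotient is provably below $12$, and you concede this cannot be done symbolically and would need numerics or an unspecified ``clever'' choice. The paper never needs such a bound. It instead works with the first-order system \eqref{PDE2}, solves the resulting singular linear ODE \eqref{fundamentalODE} on the Foscolo--Haskins half for \emph{every} $\Lambda>0$ (Theorem \ref{existencesolutionhalf}), and then runs a shooting argument in the spectral parameter: refined sign information on the background solution ($w_1>0$ on $(0,T_*)$ and $w_2(T_*)>0$, Proposition \ref{signw1w2}), the degeneration of the algebraic constraints \eqref{12e}--\eqref{12f} at $\Lambda=\sqrt{72}$ acting as a barrier (Proposition \ref{keylemma}), and positivity of $\chi$ at $\Lambda$ small (Lemma \ref{chi0positive}) force $\chi(T_*,\Lambda)$ to change sign on $(0,\sqrt{72})$, so the doubling condition $\chi(T_*,\Lambda_*)=0$ of Proposition \ref{doublingS3S3FH} holds for some $\Lambda_*$. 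This replaces your missing Rayleigh-quotient estimate entirely, and it also yields the sharper information $\nu=\Lambda_*^2/12\in(0,6)$, which your min--max scheme would not give.

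Second, your route to the Einstein co-index bound of four does not work as described. The paper obtains it from Schwahn's formula \eqref{einstein_index_NK}: with $b^2(S^3\times S^3)=0$, $b^3(S^3\times S^3)=2$, and one eigenvalue in $(0,6)$ contributing with weight at least $2$, one gets $\operatorname{Ind}^{EH}\geq 2+2=4$. Your proposal instead invokes a finite ``triality/permutation'' symmetry group acting on the eigenspace to produce an orbit of size three or four of independent eigenforms; this is unsubstantiated (the constructed eigenform is $\SU(2)^2$-invariant and no argument is given that discrete isometries move it to independent forms), and even if it worked it would be bounding the Hitchin index itself, not specifically explaining why the Einstein co-index exceeds it by the stated margin. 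Note also that the weight-$2$ contribution in Schwahn's formula requires knowing $\nu<6$, i.e.\ $\Lambda_*<\sqrt{72}$, which is precisely the quantitative output of the paper's barrier argument and is absent from your approach.
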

The solution to the eigenvalue problem is constructed following a similar strategy to the one in \cite{FH17}.

To construct complete nearly Kähler manifolds, Foscolo and Haskins first construct two families, $\Psi_a$ and $\Psi_b$, of desingularisations of the cone singularity over $N_{1,1}$, the homogeneous Sasaki-Einstein structure on $S^2 \times S^3$. These desingularisations, called nearly Kähler halves, each has a maximal volume orbit (cf. \cite[Prop. 5.15]{FH17}). To construct a complete nearly Kähler solution, Foscolo and Haskins identify conditions under which two nearly Kähler halves can be smoothly matched along their maximal volume orbits. They then demonstrate that these matching conditions are satisfied in at least two distinct cases beyond those associated with the known homogeneous examples, yielding the new complete nearly Kähler examples.

For our eigenvalue problem \eqref{1}, we first solve the corresponding ODE on the Foscolo--Haskins halves:
\begin{theorem*}[Thm. \ref{existencesolutionhalf}]
Let $a,b>0$ and consider the nearly Kähler halves $\Psi_a$ and $\Psi_b$ of Foscolo and Haskins \cite{FH17}. Then for every eigenvalue $\Lambda\in (0,\infty)$, there exists a unique (up to scale) solution to the ODE on the nearly Kähler half $\Psi_a$ (resp. $\Psi_b$). Moreover, the solution depends continuously on $a$ (resp. $b$) and $\Lambda$.   
\end{theorem*}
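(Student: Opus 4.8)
The plan is to impose the cohomogeneity one symmetry on the eigenvalue problem \eqref{1}, reducing it to a linear ODE on the half, and then to produce the solution by a Frobenius analysis at the singular orbit combined with standard linear-ODE continuation up to the maximal volume orbit. I treat $\Psi_a$; the argument for $\Psi_b$ is verbatim the same.

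\textbf{Reduction to an ODE.} Write $\Psi_a$ as $(0,T]\times N_{1,1}$ with the singular orbit glued in at $t=0$, where $N_{1,1}=S^2\times S^3 = G/H$ is the principal orbit, $t$ is the orbital coordinate, and $t=T$ is the maximal volume orbit. A $G$-invariant section $\beta$ of $\Omega^2_8$ over the interior splits as $\beta = \beta_0(t) + dt\wedge\beta_1(t)$ with $\beta_0(t)\in(\Lambda^2 T^*N_{1,1})^H$ and $\beta_1(t)\in(T^*N_{1,1})^H$, constrained pointwise by $\omega\wedge\beta = -*\beta$. These $H$-fixed spaces are finite-dimensional, so $\beta$ is encoded by finitely many functions of $t$, and the conditions $d^*\beta = 0$ and $\Delta\beta = \Lambda\beta$ turn into a linear system of ODEs on $(0,T]$ with coefficients depending real-analytically on $a$ and affinely on $\Lambda$; eliminating the components fixed by the coclosedness constraint brings it to a single scalar second-order linear equation $\mathcal L_{a,\Lambda} f = 0$ (equivalently, a first-order linear system whose space of solutions regular at the singular orbit is one-dimensional). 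Carrying out this reduction explicitly is a lengthy but routine representation-theoretic computation of the same type as in \cite{FH17}.

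\textbf{Endpoint analysis, continuation and continuity.} At $t=T$ the orbit is principal and the nearly Kähler structure is smooth, so $\mathcal L_{a,\Lambda}$ is non-singular there and no boundary condition is imposed (the two halves are matched at $T$ only at the gluing stage). At $t=0$ the equation has a regular singular point whose leading coefficients are dictated by the collapse of $N_{1,1}$ onto the singular orbit, hence by the weights of the isotropy representation; in particular the indicial roots are independent of $a$ and of $\Lambda$, since $\Lambda$ enters only through a zeroth-order term. A $G$-invariant section of $\Omega^2_8$ that extends smoothly across the singular orbit must satisfy the standard vanishing/parity conditions for invariant tensor fields at a singular orbit, and one verifies that exactly one indicial root (with the correct multiplicity) is compatible with these, so that the space of solutions extending smoothly over the singular orbit is one-dimensional. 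A Frobenius expansion then yields, for each $(a,\Lambda)$, a distinguished nonzero local solution $f_{a,\Lambda}$ near $t=0$, unique up to scale; existence and uniqueness for linear ODEs with continuous coefficients continues it, without blow-up, to all of $(0,T]$, giving the desired solution on $\Psi_a$, unique up to scale. Continuous dependence follows because the Frobenius coefficients of $f_{a,\Lambda}$ depend analytically on $(a,\Lambda)$ — the indicial roots being fixed, no resonance is created or destroyed as the parameters move — and the flow of a linear ODE depends continuously on its coefficients; normalising $f_{a,\Lambda}$ by its leading coefficient makes the whole solution depend continuously on $(a,\Lambda)$.

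\textbf{Main obstacle.} The delicate part is the local analysis at the singular orbit: describing the bundle $\Omega^2_8$ near the collapsing orbit in terms of the isotropy representation, matching the Frobenius indicial roots against the smoothness conditions for invariant tensors, and thereby pinning down that the admissible solution space is \emph{exactly} one-dimensional, as well as handling (or harmlessly absorbing) logarithmic terms should two indicial roots differ by an integer. Checking that the reduced system genuinely has the claimed rank — i.e. that the coclosedness constraint removes precisely the expected number of components — is the accompanying bookkeeping; the continuation to $t=T$ and the nonvanishing of $f_{a,\Lambda}$ (automatic from its leading term) are routine.
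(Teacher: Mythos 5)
Your outline follows the same route as the paper: the problem is reduced (in the preceding section) to the linear singular ODE system \eqref{fundamentalODE}, and the proof consists of an analysis at the regular singular point $t=0$ showing that the initial conditions compatible with smooth extension over the singular orbit form exactly a one-dimensional space, followed by routine continuation and continuous dependence. However, the decisive step is exactly what you defer with ``one verifies'' and label the ``main obstacle''. The paper carries it out concretely: after rescaling $\overline{H}=(t^{-1}\xi,t^{-3}\chi,h_1,t^{-2}f_2)$ (resp. $(t^{-2}\xi,t^{-1}\chi,t^{-2}h_1,f_2)$), suggested by the Eschenburg--Wang smoothness conditions of Lemma \ref{smoothextconditions} and the Taylor expansions of $\Psi_a,\Psi_b$ (Prop. \ref{smoothextpab}), the system takes the form $\partial_t\overline{H}=\bigl(\tfrac{1}{t}A_{-1}+A(t)\bigr)\overline{H}$, and the explicit indicial matrix $A_{-1}$ is computed to have eigenvalues $\{-6,-4,-3,0\}$ (resp. $\{-5,-3,-2,0\}$) with one-dimensional kernel; Prop. \ref{singODE} then gives existence, uniqueness up to scale, and continuity. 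Without this computation (or an equivalent one), nothing rules out the admissible space being zero- or higher-dimensional, so neither existence nor uniqueness up to scale is actually established by your argument.

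A secondary but genuine flaw is your a priori claim that the indicial roots are independent of $a$ and $\Lambda$ ``since $\Lambda$ enters only through a zeroth-order term''. In the reduced system the $\Lambda$-terms are multiplied by $1/\lambda$ or $1/(\lambda\mu)$, and these functions degenerate at the singular orbit; indeed, in the paper's computation $\Lambda$ (and $a$, $b$) appear in the entries of $A_{-1}$, and it is only after computing the characteristic polynomial that the eigenvalues turn out to be parameter-independent. So the ``no resonance is created or destroyed'' assertion underpinning your continuity argument also rests on the explicit computation you have omitted. The reduction itself (finitely many invariant components, coclosedness constraints absorbed as the conserved quantities \eqref{12e}--\eqref{12f}) matches the paper and is fine, as are the continuation to the maximal volume orbit and the continuous-dependence boilerplate.
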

After constructing these solutions, we analyse conditions on the maximal volume orbit to enable matching between halves. A key challenge here is the non-explicit nature of the Foscolo--Haskins nearly Kähler structures, which results in a non-explicit ODE. This makes obtaining the qualitative estimates needed to match the solutions quite challenging. To achieve this, we had to derive refined qualitative estimates of the Foscolo--Haskins solutions (cf. Prop. \ref{signw1w2}) and a careful analysis of the conserved quantities in the ODE.

To the best of our knowledge, there are only two similar results in the literature. The first, by Gibbons, Hartnoll, and Pope \cite{GHP03}, involves numerical approximations of the cohomogeneity one metrics constructed by Böhm \cite{Bohm98}. We believe that the methods introduced in this paper could be adapted to study the stability of cohomogeneity one Einstein metrics, thereby extending and formalising the work of \cite{GHP03}. The second result, by Branding and Siffert \cite{BS23}, examines the stability of cohomogeneity one harmonic self-maps. In their case, the underlying cohomogeneity one metrics are explicit, and the stability problem reduces to a Sturm--Liouville-type problem, neither of which occurs in our work.
\section*{Acknowledgments}
The author is indebted to his PhD supervisors, Simon Donaldson and Lorenzo Foscolo, for their continuous support, guidance and
encouragement throughout the completion of this project. We also thank Christoph Böhm for comments and suggestions on an early version of the manuscript.

This work was supported by the Engineering and Physical Sciences Research Council [EP/S021590/1]. The EPSRC Centre for Doctoral Training in Geometry and Number Theory (The London School of Geometry and Number Theory), University College London.

\section{Cohomogeneity one $\SU(3)$-structures}\label{sectionFH}
Consider $(M^6, \omega, \rho)$ a manifold carrying an $\SU(3)$-structure. By the work of Conti--Salamon \cite{CS07}, the frame bundle of any orientable hypersurface $\Sigma^5\hookrightarrow M^6$ admits a reduction to a principal $\SU(2)$-bundle. This is equivalent to the existence of a nowhere-vanishing 1-form $\eta$ and a triple of 2-forms $(\omega_1, \omega_2, \omega_3)$ satisfying 
\begin{enumerate}
    \item $\eta\wedge \omega_i \wedge \omega_j = 2 \delta_{ij} \vol_\Sigma$,
    \item $X\lrcorner \omega_1 = Y\lrcorner \omega_2 \implies \omega_3(X,Y) \geq 0$.
\end{enumerate}
The forms $\omega_i$ pointwise span a subbundle of $\Lambda^2$, which we denote as $\Lambda^2_+$. Its orthogonal complement within $\ker(\eta)$ will be denoted by $\Lambda^2_-$. This notation is justified by observing that the Hodge star restricts to $\ker(\eta)$ as $\pm \operatorname{Id}$ on $\Lambda^2_\pm$. Let $\nu$ denote the positive unit normal vector field to $\Sigma$. The induced $\SU(2)$-structure on $\Sigma$ is given explicitly by
$$ \eta= \nu \lrcorner \omega ~~~~~~~ \omega_1=\omega \mid_\Sigma ~~~~~~~ \omega_2 + i \omega_3 = \nu \lrcorner  \left( * \rho - i \rho \right) \;.$$
Conversely, given  one parameter family of $\SU(2)$-structures, we can define an $\SU(3)$-structure on $\Sigma \times (a,b)$ by taking 
\begin{equation} \label{2to3}
    \omega = \eta \wedge dt + \omega_1 ~~~~~~~~~~ \rho + i * \rho = (\omega_2 +i \omega_3) \wedge(\eta+ i dt)\;,
\end{equation}
where $t$ is a coordinate on $(a,b)$.
In our case (cf. \cite{PS10}), the principal orbit will always be diffeomorphic to $S^2\times S^3 \cong N_{1,1}= \SU(2)\times \SU(2)/ \triangle\U(1)$. 
We are interested in parameterising the set of invariant $\SU(2)$ structures on it. On $N_{1,1}$, we have a distinguished invariant $\SU(2)$ structure: the Sasaki-Einstein structure coming from the  Calabi-Yau conifold $z_1^2+z_2^2+z_3^2 + z_4^2=0$. We will denote the associated  basis of invariant forms by $\eta^{se} \in \Omega^1$, $\omega^{se}_0 \in \Omega^2_-$ and $\omega^{se}_1,\omega^{se}_2,\omega^{se}_3 \in \Omega^2_+$, satisfying 
$$d\eta^{se} = -2 \omega^{se}_1 ~~~ d\omega^{se}_2 = 3\eta^{se} \wedge \omega_3^{se} ~~~ d\omega^{se}_3 = -3\eta^{se} \wedge \omega_2^{se} ~~~ d\omega^{se}_0 = 0\;.$$
Concerning the Sasaki-Einstein structure, the space of invariant $\SU(2)$-structures on $S^2\times S^3$ is parametrised by $\R^+ \times \R^+ \times \SO_0(1,3)$. Given $(\lambda, \mu, A) \in \R^+ \times \R^+ \times \SO_0(1,3)$, the associated $\SU(2)$-structure is given by 
$$\eta =  \lambda \eta^{se} ~~~~~~~~~ \omega_i=  \mu A \omega^{se}_i\;.$$
\begin{remark}\label{remarkreeb}
    The (left-invariant) Reeb field generates the subgroup of inner automorphisms of $\SU(2)\times \SU(2)$ that fixes $\Delta \U(1)$. In terms of the invariant $\SU(2)$-structure, the Reeb field induces a rotation in the $(\omega^{se}_2,\omega^{se}_3)$-plane.
\end{remark}
We get the following formula from the structure equations for the Sasaki-Einstein metric. Let $(\lambda, \mu, A)$ denote an invariant $\SU(2)$-structure. Then,
    \begin{equation}
        d\eta= -2 \lambda \omega_1^{se}~~~~~d \omega_i = \frac{\mu}{\lambda} \eta \wedge TA \omega_i^{se} ~~~~~d (\eta \wedge \omega_i)= d\eta \wedge \omega_i = -2 \lambda \mu \langle A \omega_i, \omega_1^{se}\rangle \vol^{se}\;,
    \end{equation}
    where $T\in \operatorname{End}(\R^{1,3})$ is given by
    $T(\omega^{se}_0)=T(\omega^{se}_1)=0$, $T(\omega^{se}_2)= 3\omega^{se}_3$ and $T(\omega^{se}_3)= -3 \omega^{se}_2$.
\begin{remark}
Formula (2.17) in \cite{FH17} contains two typos, which are corrected above.
\end{remark}
\subsection{Local nearly Kähler conditions}
We can now ask under what conditions a family of $\SU(2)$ structures on $\Sigma$ gives rise to a nearly Kähler structure on $\Sigma\times (a,b)$. By the definition of a nearly Kähler structure and \eqref{2to3}, the $\SU(3)$-structure will be a nearly Kähler structure if and only if the $\SU(2)$-structure satisfies the equations
\begin{equation}\label{NHeq}
    d\omega_1= 3\eta\wedge \omega_2 ~~~~~~~~~~~ d(\eta \wedge \omega_3) =-2 \omega^2_1\;,
\end{equation}
as well as the evolution equations
\begin{equation}\label{NHevo}
    \partial_t \omega_1 = -3 \omega_3 - d\eta ~~~~~~~ \partial_t(\eta \wedge \omega_2)=- d\omega_3
    ~~~~~~~ \partial_t(\eta \wedge \omega_3) = d\omega_2 +4 \eta \wedge \omega_1\;.
\end{equation}
An $\SU(2)$-structure $(\eta, \omega_1, \omega_2, \omega_3)$ satisfying Equations \eqref{NHeq} is called a nearly hypo $\SU(2)$-structure. Equations \eqref{NHevo} are called nearly hypo evolution equations. When restricting to the case of cohomogeneity one, Foscolo and Haskins introduce a change of variables to obtain an ODE system rather than a mixed differential and algebraic system. Their results are summarised in the following proposition:

\begin{proposition}[{\cite[Prop. 3.9]{FH17}}] Let $\Psi(t)=(\lambda, \underline{u}, \underline{v})$ be a solution of the ODE system \label{241}
\begin{subequations} \label{NKsystem}
    \begin{align} 
        \lambda \partial_t u_0 + 3v_0 &=0\;, \label{5a}\\
        \lambda \partial_t u_1 + 3v_1 &= 2 \lambda^2 \;, \label{5b}\\
        \lambda \partial_t u_2 + 3v_2 &=0\;, \label{5c}\\
        \partial_t v_0 -4 \lambda u_0 &=0\;, \label{5d}\\
        \partial_t v_1 -4 \lambda u_1 &=0\;, \label{5e}\\
        \partial_t v_2 -4 \lambda u_2 &=- 3 \frac{u_2}{\lambda}\;, \label{5f}\\
        \lambda \abs{u}^2\partial_t \lambda^2- \partial_t u_2^2  &= -  \lambda^4 u_1 \label{5g}
    \end{align}
\end{subequations}
defined on an interval $(a,b)\subseteq \R$ $u_2<0$, $\lambda, \mu^2 >0$ and $u_1v_2-u_2v_1>0$. Moreover, assume that there exists some $t_0\in (a,b)$ for which the quantities
\begin{equation} \label{conservedquantities}
I_1(t)= \langle \underline{u},\underline{v} \rangle ~~~~~~~~
I_2(t)= \lambda^2 \abs{\underline{u}}^2 - u_2^2 ~~~~~~~~
I_3(t)=\lambda^2 \abs{\underline{u}}^2 - \abs{\underline{v}}^2 ~~~~~~~~
I_4(t)=v_1- \abs{\underline{u}}^2 
\end{equation}
all vanish. Then $\psi_{\lambda, \mu, A}$ with $\mu=\abs{\underline{u}}$ and 
$$ A= \frac{1}{\lambda \mu^2 }
\begin{pmatrix} 
    u_1v_2 - v_1u_2 & \lambda \mu u_0   & 0 & \mu v_0\\
    u_0v_2 - u_2v_0 & \lambda \mu u_1   & 0 & \mu v_1\\
    u_1v_0 - v_1u_0 & \lambda \mu u_2   & 0 & \mu v_2\\
    0 & 0 & -\lambda \mu^2 & 0 \\
\end{pmatrix} =
\begin{pmatrix} 
    w_0 & x_0   & 0 &  y_0\\
    w_1 & x_1   & 0 &  \frac{\mu}{\lambda}\\
    w_2 & -\lambda   & 0 &  y_2\\
    0 & 0 & -1 & 0 \\
\end{pmatrix}
$$
defines an $\SU(2) \times \SU(2)$-invariant nearly Kähler structure on $(a,b)\times N_{1,1}$.
\end{proposition}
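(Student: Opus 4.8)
The plan is to unwind the change of variables and reduce the statement to the local nearly Kähler conditions \eqref{NHeq}--\eqref{NHevo}, following the approach of \cite{FH17}. The first task is to make the dictionary $(\lambda,\underline u,\underline v)\leftrightarrow(\eta,\omega_1,\omega_2,\omega_3)$ explicit: unpacking $\eta=\lambda\eta^{se}$ and $\omega_i=\mu A\omega_i^{se}$ with $\mu=|\underline u|$ and $A$ the matrix in the statement, one reads off that $u_0,u_1,u_2$ are the components of $\omega_1$ in the frame $\{\omega_0^{se},\omega_1^{se},\omega_2^{se}\}$, that $\omega_2=-|\underline u|\,\omega_3^{se}$, and that $\lambda\omega_3$ has components $v_0,v_1,v_2$ in the same frame; the absence of an $\omega_3^{se}$-component in $\omega_1$ and $\omega_3$ is the gauge normalisation afforded by the Reeb rotation of Remark \ref{remarkreeb}.

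With the dictionary in hand, I would first check that $I_1,\dots,I_4$ are first integrals of \eqref{NKsystem}: differentiating each $I_j$ and substituting \eqref{5a}--\eqref{5g} gives $\dot I_j=0$, with \eqref{5g} --- which plays the role of the otherwise-missing $\partial_t\lambda$ equation --- being exactly what makes $\dot I_2=0$. Hence the hypothesis that the $I_j$ vanish at a single $t_0$ upgrades to $I_1\equiv I_2\equiv I_3\equiv I_4\equiv 0$ on $(a,b)$, and I would then read off its geometric content. Expanding the normalisation axiom~(1) in the $\omega^{se}$-frame, and using that the wedge pairing is indefinite with $\omega_0^{se}\in\Lambda^2_-$ the timelike direction, one finds that $\eta\wedge\omega_i\wedge\omega_j=2\delta_{ij}\vol$ holds precisely when $I_1=0$ (orthogonality $\omega_1\perp\omega_3$) and $I_3=0$ (equality of the induced volumes), the other components being automatic; together with the open conditions $u_2<0$, $\lambda,\mu^2>0$, $u_1v_2-u_2v_1>0$, which pin down the positivity axiom~(2) and the orientation, this exhibits $t\mapsto(\eta(t),\omega_i(t))$ as a smooth one-parameter family of genuine invariant $\SU(2)$-structures on $N_{1,1}$. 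Expanding the nearly hypo equations \eqref{NHeq} in the same frame via the Sasaki--Einstein structure equations (in particular $d\omega_1^{se}=0$), the equation $d\omega_1=3\eta\wedge\omega_2$ reduces to $u_2=-\lambda\mu$, i.e. to $I_2=0$, and $d(\eta\wedge\omega_3)=-2\omega_1^2$ reduces to $v_1=|\underline u|^2$, i.e. to $I_4=0$; so both nearly hypo equations hold throughout $(a,b)$.

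It then remains to verify the nearly hypo evolution equations \eqref{NHevo}. Computing $\partial_t$ of $\omega_1$, $\eta\wedge\omega_2$, $\eta\wedge\omega_3$ in the $\omega^{se}$-frame and their exterior derivatives via the Sasaki--Einstein structure equations and the formula $d\omega_i=\tfrac{\mu}{\lambda}\eta\wedge TA\omega_i^{se}$, each of the three evolution equations becomes a vector identity in the $\omega^{se}$-frame. A direct computation shows that $\partial_t\omega_1=-3\omega_3-d\eta$ is equivalent to \eqref{5a}--\eqref{5c}, that $\partial_t(\eta\wedge\omega_3)=d\omega_2+4\eta\wedge\omega_1$ is equivalent to \eqref{5d}--\eqref{5f} (with $I_2=0$ entering its $\omega_2^{se}$-component), and that the remaining scalar equation $\partial_t(\eta\wedge\omega_2)=-d\omega_3$, after substituting \eqref{5a}--\eqref{5f} and the relations $I_j=0$, reduces to \eqref{5g}. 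Feeding the resulting nearly hypo structure into \eqref{2to3} then produces a nearly Kähler $\SU(3)$-structure on $(a,b)\times N_{1,1}$, which is $\SU(2)\times\SU(2)$-invariant because every ingredient is.

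I expect the main obstacle to be the sign- and index-bookkeeping that runs through these steps. Because $\omega_0^{se}\in\Lambda^2_-$ pairs with the opposite sign from $\omega_1^{se},\omega_2^{se},\omega_3^{se}$ under the wedge pairing, one must be careful expanding $d\omega_1$, $d(\eta\wedge\omega_3)$, $\omega_1^2$ and the $I_j$ themselves; and one must verify that the four vanishing conditions, together with the three open inequalities, genuinely recover the full $\SU(2)$-structure data and both nearly hypo equations, so that no orthonormality relation or sign is overlooked. Everything else is a lengthy but essentially mechanical computation in linear algebra and calculus, which is why the result is simply quoted from \cite{FH17}.
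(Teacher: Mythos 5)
This proposition is quoted from \cite[Prop.~3.9]{FH17}; the paper you are working from gives no proof of it (only the short remark after the statement interpreting the $I_j$), so the comparison is with the argument of Foscolo--Haskins. Your outline reconstructs essentially that argument: unwind the dictionary between $(\lambda,\underline{u},\underline{v})$ and the moving frame $(\eta,\omega_i)$ (your reading of the columns of $A$, with $u$ the components of $\omega_1$, $\omega_2=-\mu\,\omega_3^{se}$ and $\lambda\omega_3$ having components $v$, is correct), propagate the constraints from $t_0$, identify them with the algebraic $\SU(2)$-structure conditions and the nearly hypo equations \eqref{NHeq}, match \eqref{5a}--\eqref{5g} with the evolution equations \eqref{NHevo}, and conclude via \eqref{2to3}. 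Your identifications $I_2=0\Leftrightarrow u_2=-\lambda\mu\Leftrightarrow d\omega_1=3\eta\wedge\omega_2$ and $I_4=0\Leftrightarrow d(\eta\wedge\omega_3)=-2\omega_1^2$ are consistent with the remark in the paper (the bookkeeping of which $I_j$ carries which condition differs slightly, but the two accountings are equivalent), and your observation that $I_2=0$ is what rescues the $\omega_2^{se}$-component of the third evolution equation is exactly right.

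One step is stated too strongly. It is not true that substituting \eqref{5a}--\eqref{5g} into $\partial_t I_j$ gives $\partial_t I_j=0$ for each $j$ separately: for instance \eqref{5a}--\eqref{5c} and \eqref{5e} give $\partial_t I_4=\tfrac{6}{\lambda}I_1$, and $\partial_t I_1$ is likewise a combination of $I_2,I_3,I_4$, with \eqref{5g} entering only to make $\partial_t I_2$ of the same form. So the $I_j$ are not individually first integrals; what is true (and what the hypothesis ``all $I_j$ vanish at a single $t_0$'' is designed for) is that $I=(I_1,\dots,I_4)$ satisfies a linear system $\partial_t I=M(t)I$ with coefficients depending on the solution, whence $I(t_0)=0$ forces $I\equiv 0$ by uniqueness for linear ODEs. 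This is a one-line repair, but as written the propagation step of your proof relies on a false premise (constancy of each $I_j$), so it should be rephrased in terms of the preserved zero locus. With that correction, your plan coincides with the proof in the cited source.
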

\begin{remark}
    The vanishing of $I_1$, $I_2$, $I_3$ and $I_4$ correspond to $\omega_1\wedge \omega_3= 0$, $\omega_1^2= \omega^2_2$, and $\omega_1^2= \omega^2_3$ and the second equation of \eqref{NHeq} respectively. The ODE \eqref{5g} implies $I=(I_1,I_2,I_3,I_4)$ is a conserved quantity for the ODE.
\end{remark}
 \begin{corollary}[{\cite[Cor. 2.46]{FH17}}] \label{246}
     Let $\Psi_{\lambda, \mu, A}$ be an invariant nearly hypo structure on $N_{1,1}$ such that $w_1=0=w_2$. Then, it is an invariant hypersurface of the sine-cone over the invariant Sasaki-Einstein.
 \end{corollary}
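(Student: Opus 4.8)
The plan is to convert the algebraic hypothesis $w_1 = w_2 = 0$ into an explicit normal form for the $\SU(2)$-structure and then to recognise that form as a hypersurface of the sine-cone. Reading off the matrix for $A$ in Proposition \ref{241}, the image $A\,\omega^{se}_0$ has components $(w_0, w_1, w_2, 0)$ in the basis $(\omega^{se}_0, \omega^{se}_1, \omega^{se}_2, \omega^{se}_3)$, so the hypothesis says precisely that $A$ fixes the line $\R\,\omega^{se}_0$. Since $A \in \SO_0(1,3)$ preserves the ambient Lorentzian form --- with $\omega^{se}_0$ spanning the timelike direction --- and since $w_0 > 0$ (the time-time entry of a Lorentz transformation in the identity component), this forces $w_0 = 1$, hence $A$ fixes $\omega^{se}_0$ and restricts to a rotation of the spacelike $3$-plane $\langle \omega^{se}_1, \omega^{se}_2, \omega^{se}_3 \rangle$. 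Combining this with the fact, also visible from the matrix, that $A\,\omega^{se}_2 = -\,\omega^{se}_3$ always, $A$ must be the rotation $\omega^{se}_2 \mapsto -\,\omega^{se}_3$, $\omega^{se}_1 \mapsto \cos\theta\,\omega^{se}_1 + \sin\theta\,\omega^{se}_2$ for some angle $\theta$, so that $\eta = \lambda\,\eta^{se}$, $\omega_1 = \mu(\cos\theta\,\omega^{se}_1 + \sin\theta\,\omega^{se}_2)$, $\omega_2 = -\mu\,\omega^{se}_3$ and $\omega_3 = \mu(-\sin\theta\,\omega^{se}_1 + \cos\theta\,\omega^{se}_2)$, with $\lambda, \mu > 0$ and $\theta$ constants.

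Next I would impose the nearly hypo equations \eqref{NHeq} on this family, using the Sasaki--Einstein structure equations, $d\omega^{se}_1 = 0$ (from $d\eta^{se} = -2\,\omega^{se}_1$), and the fact that invariant $2$-forms on $N_{1,1}$ are horizontal --- an $\eta^{se}$-component would be an invariant $1$-form, hence a multiple of $\eta^{se}$ --- so that $(\omega^{se}_i)^2 = (\omega^{se}_1)^2$ and $\omega^{se}_i \wedge \omega^{se}_j = 0$ for $i \ne j$ in $\{1,2,3\}$. A direct computation then shows that $d\omega_1 = 3\,\eta \wedge \omega_2$ is equivalent to $\sin\theta = -\lambda$, and $d(\eta \wedge \omega_3) = -2\,\omega_1^2$ to $\mu = \lambda^2$. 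Thus an invariant nearly hypo structure with $w_1 = w_2 = 0$ is governed by the single parameter $\lambda \in (0,1)$, with $\mu$ and $\theta$ determined by $\lambda$.

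To conclude I would feed this one-parameter family into the nearly hypo evolution equations \eqref{NHevo}. Using $\sin\theta = -\lambda$ and $\mu = \lambda^2$, all three evolution equations collapse to $\partial_t \lambda = \cos\theta$; differentiating $\sin\theta = -\lambda$ gives $\partial_t \theta = -1$, so after a translation of the parameter $\lambda = \sin t$, $\mu = \sin^2 t$ on $t \in (0, \pi)$. Since $A$ lies in $\SO(3)$ and so preserves the transverse Sasaki--Einstein metric, and $\mu = \lambda^2$, the metric of the $\SU(2)$-structure $(\lambda\,\eta^{se}, \mu\,A\,\omega^{se}_i)$ on the slice is the homothety $\lambda^2\,g^{se} = \sin^2 t\,g^{se}$ of the invariant Sasaki--Einstein metric $g^{se}$. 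By \eqref{2to3} the ambient $6$-metric on $N_{1,1} \times (0,\pi)$ is then $dt^2 + \sin^2 t\,g^{se}$, which is exactly the sine-cone over the invariant Sasaki--Einstein structure; the nearly Kähler $\SU(3)$-structure produced by the evolution is the nearly Kähler structure of this sine-cone, and the original structure appears as its slice at the value of $t$ with $\sin t = \lambda$. The angle $\theta$ is harmless: it only records the Reeb rotation of Remark \ref{remarkreeb}, an automorphism of $N_{1,1}$ fixing the Sasaki--Einstein structure up to a rotation of the pair $(\omega^{se}_2, \omega^{se}_3)$.

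I expect the main obstacle to be this last identification: one must check that the nearly hypo evolution of the family above not only reproduces the sine-cone \emph{metric} but carries the sine-cone's nearly Kähler $\SU(3)$-structure --- a uniqueness point that for $N_{1,1} \times (0,\pi)$ can be settled directly --- and one must keep the sign conventions in the Sasaki--Einstein structure equations and the choice of branch $\cos\theta = \pm\sqrt{1-\lambda^2}$ consistent throughout the computations above.
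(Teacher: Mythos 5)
Your argument is correct, but note that the paper does not actually prove this statement: it is quoted from \cite[Cor.~2.46]{FH17}, so there is no internal proof to match against. Your route — use $w_1=w_2=0$ plus $A\in\SO_0(1,3)$ to force $w_0=1$, reduce $A$ to a one-parameter rotation with $A\omega_2^{se}=-\omega_3^{se}$, and then pin down $\sin\theta=-\lambda$, $\mu=\lambda^2$ — is essentially the normal-form identification that Foscolo--Haskins' parametrisation of invariant nearly hypo structures gives. Two remarks. First, the relations $\sin\theta=-\lambda$ and $\mu=\lambda^2$ can be read off directly from the second and fourth columns of the matrix in Proposition \ref{241} (the entries $-\lambda$ and $\mu/\lambda$), so re-imposing \eqref{NHeq} is a consistency check rather than a needed step; conversely, if you prefer to rely only on the first and third columns of that matrix, your derivation from \eqref{NHeq} is fine. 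Second, and more importantly, your third paragraph (running the nearly hypo evolution and worrying about whether it reproduces the sine-cone's $\SU(3)$-structure) is unnecessary, and it is exactly where your flagged ``main obstacle'' lives: once you have $\eta=\lambda\eta^{se}$, $\omega_1=\lambda^2(\cos\theta\,\omega_1^{se}+\sin\theta\,\omega_2^{se})$, $\omega_2=-\lambda^2\omega_3^{se}$, $\omega_3=\lambda^2(-\sin\theta\,\omega_1^{se}+\cos\theta\,\omega_2^{se})$ with $\sin\theta=-\lambda$, this is literally the induced structure on the slice $\{t=t_0\}$ of the sine cone in Example \ref{sinecone-param}, with $\sin t_0=\lambda$ and the branch of $\cos\theta$ selecting which side of the maximal volume orbit $t_0$ lies on (and the residual rotation being the Reeb symmetry of Remark \ref{remarkreeb}). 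Comparing with that explicit parametrisation finishes the corollary directly and dissolves the uniqueness issue you raise at the end.
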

The ODE system \eqref{NKsystem} is invariant under various symmetries. Three of them will be key in the discussion ahead: time translation $t \mapsto t+t_0$, for $t_0 \in \R$ and the following two involutions:
\begin{subequations} \label{involutions}
    \begin{align}
        \tau_1: &( \lambda,u_0, u_1, u_2, v_0, v_1, v_2, t) \mapsto (\lambda, - u_0,  - u_1, u_2, v_0, v_1, - v_2, -t)\;, \\
        \tau_2: &( \lambda,u_0, u_1, u_2, v_0, v_1, v_2, t) \mapsto (\lambda, u_0,  - u_1, u_2, -v_0, v_1, - v_2, -t)\;.
    \end{align}
\end{subequations}
A complete list of the symmetries of the ODE \eqref{NKsystem} and their geometric interpretation can be found in \cite[Prop. 3.11]{FH17}. 

There are four explicit examples of solutions to the ODE system \eqref{NKsystem}: the three homogeneous examples $S^6$, $\C P^3$ and $S^3 \times S^3$; and the sine-cone over the homogeneous $N_{1,1}$ with its homogeneous Sasaki-Einstein.
\begin{example}[The sine cone] \label{sinecone-param}
\begin{equation}
\lambda= \sin(t)~~~~~~~~
    \mu= \sin^2(t)~~~~~~~~ A= \begin{pmatrix}
1 & 0   & 0 \\
0 & \cos(t) & \sin(t) \\
0 & -\sin(t) & \cos(t)
\end{pmatrix}
\end{equation}
\end{example}
\begin{example}[Homogeneous nearly Kähler on $S^3\times S^3$]
    \begin{equation} \label{homogeneousS3S3}
    \begin{split} \lambda= 1~~~~~~~~~~~~~~~~~~~~~~~~~~~~~~~~~~~~~~~~~~~~~~~~~~
    \mu= \frac{2\sqrt{3}}{3}\sin(\sqrt{3}t) \\
    \mu A= \begin{pmatrix}
\frac{2}{3} \left(\sin^2(\sqrt{3}t) + 1\right) & \frac{\sqrt{3}}{3} \sin(2\sqrt{3}t) & \frac{2}{3} \left( 2\sin^2(\sqrt{3}t) - 1\right) \\
\frac{2}{3} \sin^2(\sqrt{3}t) & \frac{\sqrt{3}}{3}\sin(2\sqrt{3}t) & \frac{4}{3} \sin^2(\sqrt{3}t) \\
-\frac{2}{3}\cos(\sqrt{3}t) & -\frac{2\sqrt{3}}{3} \sin(\sqrt{3}t) & \frac{2}{3} \cos(\sqrt{3}t)
\end{pmatrix}
\end{split}
\end{equation} 
\end{example}
\begin{comment}
\begin{example}[The round $S^6$]
\begin{equation*}
\lambda= \frac{3}{2}\cos(t)~~~~~~~~
    \mu= 3\sin(t)\cos(t)~~~~~~~~\mu A= \begin{pmatrix}
\frac{3}{2}\left(\sin^2(t) + 1 \right) & \frac{3}{2} \left(5  \cos^2(t) - 2\right) \sin(t) & \frac{3}{2} \left(4- 5  \cos^2(t)\right) \cos(t) \\
3 \sin^2(t) & 3 \, {\left(2 \cos^2(t)- 1\right)} \sin(t) & 6 \cos(t) \sin^2(t) \\
-\frac{3}{2} \cos^2(t) & -\frac{9}{2} \cos^2(t) \sin(t) & \frac{3}{2} \, {\left(3 \, \cos^2(t)- 2\right)} \cos(t)
\end{pmatrix}
\end{equation*}
\end{example}
\begin{example}[Homogeneous nearly Kähler on $\C P^3$]
\begin{equation*}\lambda= \frac{3\sqrt{2}}{4}\sin(\sqrt{2}t)~~~~~~~~
    \mu= \frac{3\sqrt{2}}{4}\sin(\sqrt{2}t)~~~~~~~~\mu A= \begin{pmatrix}
\frac{3}{8} \left(\sin^2(\sqrt{2}t) + 2\right) & \frac{3}{8} \left( 3\cos^2(\sqrt{2}t) -1 \right) & \frac{3\sqrt{2}}{8}  \sin(2 \sqrt{2}t) \\
\frac{3}{4} \, \cos(\sqrt{2}t) & \frac{3}{4} \, \cos(\sqrt{2}t) & \frac{3\sqrt{2}}{4} \sin(\sqrt{2}t) \\
\frac{3}{8} \, \sin^2(\sqrt{2}t) & -\frac{9}{8} \, \sin^2(\sqrt{2}t) & \frac{3\sqrt{2}}{8} \sqrt{2}\sin(2\sqrt{2}t)
\end{pmatrix}\;.   
\end{equation*} 
\end{example}
\end{comment}
We collect some formulae and relations for general cohomogeneity one nearly Kähler manifolds:
\begin{lemma} \label{cohomdiff}
Let $(M^6, \omega, \rho)$ be a cohomogeneity one nearly Kähler manifold, and let $(\eta,\omega_i)= \psi_{\lambda, \mu, A}\left( \eta^{se}, \omega^{se}_i\right)$ the associated $\SU(2)$-moving frame. Then, we have the following relations
\begin{multicols}{2}
    \begin{enumerate} [label=(\roman*)]
    \item $d\eta = 2\frac{\lambda w_1}{\mu} \omega_0 - 2\frac{\lambda x_1}{\mu} \omega_1 -2 \omega_3$,
    \item $d \omega_0= - 3 \frac{w_2}{\lambda} \eta \wedge \omega_2$,
    \item $d(\eta\wedge \omega_0) = - 2 \frac{\lambda w_1}{\mu} \omega_1^2$,
    \item $d \omega_2 = -\frac{3 w_2}{\lambda} \eta \wedge \omega_0 - 3 \eta \wedge \omega_1  + \frac{3 y_2}{\lambda} \eta \wedge \omega_3$,
    \item $d \omega_3 = -3 \frac{y_2}{\lambda} \eta \wedge \omega_2$,
    \item $\partial_t \eta= \partial_t\log(\lambda) \eta $,
    \item $\partial_t \omega_0 = \partial_t\log(\mu) \omega_0 -2 \frac{\lambda w_1}{\mu} \omega_1 - 3 \frac{w_2}{\lambda} \omega_3$,
    \item $\partial_t \lambda = 3 y_2 -2 \frac{\lambda^2}{\mu}x_1$,
    \item $\partial_t \mu = 2 \lambda x_1$.
    \item[\vspace{\fill}]
    \end{enumerate}
\end{multicols}
\end{lemma}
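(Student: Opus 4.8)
The plan is to treat the lemma as a bookkeeping computation carried out in the fixed Sasaki--Einstein coframe. The key device is that every member of the moving frame is $\lambda$, $\mu$ or $A$ applied to a \emph{$t$-independent} form on $N_{1,1}$: $\eta=\lambda\eta^{se}$ and $\omega_\alpha=\mu A\,\omega^{se}_\alpha$ for $\alpha=0,1,2,3$, where $A\in\SO_0(1,3)$ acts on the four-dimensional space of invariant $2$-forms preserving the pairing $\eta^{se}\wedge\omega^{se}_\alpha\wedge\omega^{se}_\beta=2g_{\alpha\beta}\,\vol^{se}$ with $g=\mathrm{diag}(-1,1,1,1)$ (so $\omega^{se}_0$ spans the timelike line), and $\omega_0=\mu A\,\omega^{se}_0$ is the induced invariant generator of $\Lambda^2_-$. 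To pass back to the moving frame I would use $A^{-1}=gA^{t}g$, whose entries are read off the explicit matrix in Proposition \ref{241}; its special shape there (third column $(0,0,0,-1)^{t}$, encoding the gauge-fixing of the Reeb rotation of Remark \ref{remarkreeb}) makes most terms vanish.

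For the exterior derivatives (i)--(v): since $\lambda,\mu,A$ depend only on $t$ they are $d$-closed on $N_{1,1}$, so $d\eta=-2\lambda\,\omega^{se}_1$ at once, and $d\omega_\alpha=\mu\sum_j A_{j\alpha}\,d\omega^{se}_j$; substituting $d\omega^{se}_0=0=d\omega^{se}_1$ (the latter since $2\omega^{se}_1=-d\eta^{se}$), $d\omega^{se}_2=3\eta^{se}\wedge\omega^{se}_3$ and $d\omega^{se}_3=-3\eta^{se}\wedge\omega^{se}_2$ (equivalently the packaged identity $d\omega_\alpha=\tfrac{\mu}{\lambda}\,\eta\wedge TA\,\omega^{se}_\alpha$ of Section \ref{sectionFH}, derived the same way and equally valid for $\alpha=0$) writes each right-hand side in the Sasaki--Einstein coframe. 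Re-expanding $\omega^{se}_j=\mu^{-1}\sum_\beta(A^{-1})_{j\beta}\,\omega_\beta$ and $\eta^{se}=\lambda^{-1}\eta$ and collecting terms reduces (i), (ii), (iv), (v) to polynomial identities in $w_i,x_i,y_i,\lambda,\mu$ that hold by the $g$-orthonormality of the columns of $A$. Finally (iii) follows from (i), (ii) and the Leibniz rule, using $\eta\wedge\eta=0$ and the $\Lambda^2_\pm$-orthogonality relations $\omega_0\wedge\omega_1=\omega_0\wedge\omega_3=0$ and $\omega_0\wedge\omega_0=-\omega_1\wedge\omega_1$ (equivalently $\eta\wedge\omega_\alpha\wedge\omega_\beta=2\lambda\mu^{2}g_{\alpha\beta}\,\vol^{se}$).

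For the $t$-derivatives, (vi) is immediate as $\eta^{se}$ is $t$-independent. For (ix) I would differentiate $\mu^{2}=|\underline{u}|^{2}$ and substitute $\partial_t u_0,\partial_t u_1,\partial_t u_2$ from \eqref{5a}--\eqref{5c}; the $\underline v$-terms collapse into $-\tfrac{3}{\lambda}\langle\underline u,\underline v\rangle=-\tfrac{3}{\lambda}I_1=0$, leaving $\partial_t\mu=2\lambda u_1/\mu=2\lambda x_1$. For (viii), the vanishing of $I_2$ with $\mu=|\underline u|$ and $u_2<0<\lambda,\mu$ forces $u_2=-\lambda\mu$; differentiating this and inserting $\partial_t u_2=-3v_2/\lambda$ from \eqref{5c} together with (ix) gives $\partial_t\lambda=3y_2-\tfrac{2\lambda^{2}}{\mu}x_1$. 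For (vii) I would write $\partial_t\omega_\alpha=\partial_t(\log\mu)\,\omega_\alpha+B\omega_\alpha$ with $B:=(\partial_tA)A^{-1}\in\mathfrak{so}(1,3)$ acting on $\mathrm{span}(\omega_0,\dots,\omega_3)$; feeding the already-established (i), (iv)--(vi), (viii)--(ix) into the nearly hypo evolution equations \eqref{NHevo} pins down $B\omega_1=-\tfrac{2\lambda w_1}{\mu}\omega_0-\omega_3$, $B\omega_2=0$ and $B\omega_3=-\tfrac{3w_2}{\lambda}\omega_0+\omega_1$, whereupon the $g$-antisymmetry of $B$ (with $\omega_0$ $g$-timelike and $g(\omega_0,\omega_0)=-g(\omega_1,\omega_1)$) determines $B\omega_0=-\tfrac{2\lambda w_1}{\mu}\omega_1-\tfrac{3w_2}{\lambda}\omega_3$, which is (vii).

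The only genuine difficulty I expect is (vii), since $\omega_0$ is the one member of the moving frame not handled directly by \eqref{NHevo}: one must either grind through the differentiation of $w_0,w_1,w_2$ using the full system \eqref{NKsystem} and all four conservation laws \eqref{conservedquantities}, or set up the Lorentz-antisymmetry argument above. Everywhere else the only hazard is bookkeeping, above all keeping signs straight when passing between $A$ and $A^{-1}=gA^{t}g$ in (i)--(v).
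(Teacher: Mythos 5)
Your proposal is correct, and for items (i)--(vi), (viii), (ix) it is essentially the computation the paper intends: everything is pushed into the fixed Sasaki--Einstein coframe using the structure equations and $A^{-1}=JA^tJ$, and (viii)--(ix) follow from \eqref{5a}--\eqref{5c} together with $I_1=0$ and $u_2=-\lambda\mu$ (the paper itself invokes exactly these facts). The one place where you genuinely diverge is (vii), which is also the only item the paper proves in detail. The paper writes $\partial_t\omega_0=\log(\mu)'\,\omega_0+(A^tJA')\,\omega_0$ and computes the three inner products $\langle w',w\rangle$, $\langle w',x\rangle$, $\langle w',y\rangle$ directly, by moving the derivative ($\langle w',x\rangle=-\langle w,x'\rangle$, etc.) and feeding in the ODE system \eqref{NKsystem} for $u=\mu x$, $v=\lambda\mu y$ together with $\langle w,u\rangle=\langle w,v\rangle=0$ and $u_2=-\lambda\mu$; this is a self-contained calculation in the ODE variables. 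You instead work with $B=(\partial_tA)A^{-1}\in\mathfrak{so}(1,3)$, determine $B\omega_1$, $B\omega_2=0$ and $B\omega_3$ from the nearly hypo evolution equations \eqref{NHevo} combined with the already-proved items (i), (iv)--(vi), (viii)--(ix), and then recover $B\omega_0$ from Lorentz antisymmetry. I checked the resulting values ($B\omega_1=-\tfrac{2\lambda w_1}{\mu}\omega_0-\omega_3$, $B\omega_2=0$, $B\omega_3=-\tfrac{3w_2}{\lambda}\omega_0+\omega_1$, hence $B\omega_0=-\tfrac{2\lambda w_1}{\mu}\omega_1-\tfrac{3w_2}{\lambda}\omega_3$) and they reproduce (vii) exactly, with no circularity since none of the inputs you use depends on (vii). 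What your route buys is that you never differentiate the column $w$ or touch \eqref{NKsystem} again once (i)--(vi), (viii)--(ix) are in place, the skew-symmetry doing the work; what the paper's route buys is independence from the evolution equations \eqref{NHevo} and a formula for $w'$ that it reuses immediately afterwards in Lemma \ref{cohomdiff2}, which your argument does not produce as a by-product. One small point of care in your step ``$\eta\wedge B\omega_2=0\Rightarrow B\omega_2=0$'': this needs the (true, but worth stating) fact that $B\omega_2$ lies in the span of $\omega_0,\dots,\omega_3$, i.e.\ in $\Lambda^2(\ker\eta)$, on which wedging with $\eta$ is injective.
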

\begin{proof}
All the identities follow from the above formulae and $A^{-1}=J A^t J$, since $A\in \SO(1,3)$. We only include the details of $(vii)$. We have
$$
        \partial_t \omega_0 = (\mu A )' \omega_0^{se} = \log(\mu)' \omega_0 + (A^t J A') \omega_0 = \log(\mu)' \omega_0 + \langle w', w \rangle  \omega_0+ \langle w', x \rangle  \omega_1 + \langle w', y \rangle  \omega_3 \;,
    $$
    where $\langle \cdot, \cdot \rangle$ is the standard inner product in $\R^{1,2}$.  We can now compute these inner products by knowing $\langle w, w \rangle=-1$, $\langle w, x \rangle = \langle w, y\rangle =0$ and the nearly Kähler conditions. First,
    $\langle w, w \rangle =-1$ implies $\langle w', w \rangle=0$. Now, for the $\omega_1$ component, we have
    $$
        \langle w', x \rangle = - \langle w, x' \rangle= - \frac{1}{\mu}\langle w, (\mu x)' \rangle
        =  - \frac{1}{\mu} \langle w, u ' \rangle
        = -\frac{1}{\mu} \left( -3 \langle w, v \rangle +2 \lambda w_1\right) = -2 \frac{\lambda}{\mu} w_1\;,
    $$
    where we used the structure ODE \eqref{NKsystem} and the fact that $\langle w, v \rangle=0=\langle w, u \rangle $. Similarly, 
    $$
        \langle w', y \rangle = - \langle w, y' \rangle= - \frac{1}{\lambda \mu}\langle w, (\lambda \mu y)' \rangle
        = - \frac{1}{\lambda \mu}\langle w, v' \rangle
        = -\frac{1}{\lambda \mu} \left( 4 \lambda \langle w, v \rangle - 3 \frac{u_2}{\lambda} w_2\right) = - 3 \frac{w_2}{\lambda} \;.
    $$
    where we used \eqref{NKsystem}, the fact that $\langle w, u \rangle=0$ and $u_2=-\mu\lambda$ on the last step.
\end{proof}
\noindent Similar relations could be obtained for the remaining forms, but we omit them since we will not need them in our discussion.
\begin{lemma} \label{cohomdiff2}
Let $(M^6, \omega, \rho)$ be a cohomogeneity one nearly Kähler structure, and let $(\eta,\omega_i)$ the associated $\SU(2)$-moving frame, with $\omega_i=\mu A \omega_i^{se}$ and $A\in \SO_0(1,3)$ as above. Then, the first column of $A$ satisfies the evolution equations
\begin{subequations}
    \begin{align}
        \partial_t w_0 &= -2\frac{\lambda x_0}{\mu} w_1 - 3 \frac{y_0}{\lambda} w_2\\
        \partial_t w_1 &= -2\frac{\lambda x_1}{\mu} w_1 - 3 \frac{\mu}{\lambda^2} w_2 \label{10b}\\
        \partial_t w_2 &= 2\frac{\lambda^2}{\mu} w_1 - 3 \frac{y_2}{\lambda} w_2 \label{10c}
    \end{align}
\end{subequations}
\end{lemma}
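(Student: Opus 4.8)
The plan is to differentiate the columns of $A$ directly, exactly as in the proof of Lemma \ref{cohomdiff}(vii), exploiting the fact that $A\in \SO_0(1,3)$ so that $A^{-1}= J A^t J$ and hence $A^t J A' $ is skew with respect to the Lorentzian form $J$. Writing the first column of $A$ as $w=(w_0,w_1,w_2,0)^t$ (note its last entry vanishes by the explicit shape of $A$), we have $\partial_t w = (\mu A)'\omega_0^{se}/\mu - \log(\mu)' w$; more precisely, since $w$ is the image of the fixed vector $\omega_0^{se}$ under $\mu A$, one gets $\partial_t w = \log(\mu)' w + (A^tJA')w$, and the $w$-component of $(A^tJA')w$ vanishes because $\langle w,w\rangle = -1$ forces $\langle w', w\rangle =0$. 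What remains is to compute the components of $\partial_t w$ along $x$ (the second column of $A$), along the third column $(0,0,0,-1)^t$, and along $y$ (the fourth column). The third-column component is $-\langle w', (0,0,0,-1)^t\rangle$; but the third column is constant in $t$, so this equals $\langle w, \partial_t(0,0,0,-1)^t\rangle = 0$, which is why only the $x$- and $y$-terms appear in each evolution equation.

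The substantive computation is therefore to evaluate $\langle w', x\rangle$ and $\langle w', y\rangle$ and, for the full system, the analogous inner products against the other basis vectors. I would proceed as in Lemma \ref{cohomdiff}: use $\langle w', x\rangle = -\langle w, x'\rangle$, then rescale to the ODE variables via $\mu x = u$, $\lambda\mu y = v$, so that $\langle w,x'\rangle = \mu^{-1}\langle w,(\mu x)'\rangle - (\log\mu)'\langle w,x\rangle = \mu^{-1}\langle w, u'\rangle$ using $\langle w,x\rangle = 0$. Substituting the defining ODEs \eqref{5a}--\eqref{5f} for $\partial_t u_i$, and using the orthogonality relations $\langle w,u\rangle = \langle w,v\rangle = 0$ (which hold because $w$ is $J$-orthogonal to the second and fourth columns of $A$, whose scalings are $u$ and $v$), together with the normalisation $u_2 = -\lambda\mu$ coming from the $(3,2)$-entry $-\lambda$ of the matrix, the inner products collapse to the stated coefficients. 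For \eqref{10b}: $\langle w',x\rangle = -\mu^{-1}\langle w,u'\rangle$ with $\lambda\partial_t u = -3v + 2\lambda^2 e_1$ gives a term $-2\lambda\mu^{-1}\langle w, e_1\rangle$ times $x_1$'s row contribution; tracking which column pairs with which one finds the $-2\lambda x_1\mu^{-1} w_1$ and $-3\mu\lambda^{-2} w_2$ terms. For \eqref{10c}: since $w_2$ sits in the third slot and $u_2 = -\lambda\mu$, one differentiates $w_2$ against $u'$ and $v'$ using \eqref{5f}, \eqref{5e}, producing $2\lambda^2\mu^{-1} w_1$ and $-3y_2\lambda^{-1} w_2$.

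The only genuine obstacle I anticipate is bookkeeping: keeping straight the three different scalings ($x = u/\mu$, the constant third column, $y = v/(\lambda\mu)$), the sign conventions of the indefinite form $J = \mathrm{diag}(1,-1,-1,-1)$, and which entry of which column the coefficients $w_1, x_1, y_2$ refer to. There is no conceptual difficulty: once the identities $\langle w, w\rangle = -1$, $\langle w, u\rangle = \langle w, v\rangle = 0$, $u_2 = -\lambda\mu$, and the structure ODEs \eqref{NKsystem} are in hand, every coefficient is forced. A useful consistency check is that setting $w_1 = w_2 = 0$ must make the right-hand sides vanish, recovering Corollary \ref{246}: the sine-cone locus $\{w_1 = w_2 = 0\}$ is preserved by the flow, which is visibly the case for the system above. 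I would also cross-check the $w_1$ equation against Lemma \ref{cohomdiff}(i) and (iii), since $d\eta$ and $d(\eta\wedge\omega_0)$ both involve $\lambda w_1/\mu$ and must be consistent with $\partial_t$ of the relevant forms via the nearly hypo evolution equations \eqref{NHevo}.
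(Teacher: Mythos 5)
Your proposal is correct and follows essentially the same route as the paper: compute $\langle w',w\rangle$, $\langle w',x\rangle$, $\langle w',y\rangle$ from the structure ODEs, the $J$-orthogonality of the columns of $A$ and $u_2=-\lambda\mu$ (exactly the quantities already obtained in the proof of Lemma \ref{cohomdiff}(vii)), then expand $w'$ in the column basis of $A$ and read off the three rows, which is the paper's ``matrix multiplication'' step. The only blemish is the intermediate line $\partial_t w = \log(\mu)' w + (A^tJA')w$, which conflates the column $w$ of $A$ with $\omega_0=\mu A\,\omega_0^{se}$ (no $\log(\mu)'$ term should appear when differentiating a column of $A$ itself), but your subsequent concrete computation does not rely on it and the stated coefficients are the correct ones.
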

\begin{proof}
From the computations of Lemma \ref{cohomdiff}, we know that 
$$\langle w' ,w \rangle =0 ~~~~~~~~~~ \langle w' ,x \rangle =-2 \frac{\lambda}{\mu}w_1 ~~~~~~~~~~ \langle w' ,y \rangle =-3 \frac{w_2}{\lambda} \;.$$
More compactly, we write this as $A^{-1} w' = \begin{pmatrix} 0 \\ -2 \frac{\lambda}{\mu}w_1 \\ -3 \frac{w_2}{\lambda}       \end{pmatrix}\;.$
The result follows by matrix multiplication.
\end{proof}
\subsection{Smooth extensions over the singular orbit}\label{Smoothext-doub-matching}
To construct complete nearly Kähler manifolds, Foscolo and Haskins first construct two families of desingularisations of the cone singularity over $N_{1,1}$. We will refer to these as nearly Kähler halves and denote them by $\Psi_a(t)$ and $\Psi_b(t)$. In both cases, the parameter measures the size of the singular orbit. 

Let us revise how the desingularising families are constructed. Due to the work of Eschenburg and Wang \cite{EW00}, we have a good understanding of the necessary and sufficient conditions for a cohomogeneity one tensor to extend smoothly over a singular orbit. In our case, this reduces to the following lemma: 
\begin{lemma}[{\cite[Lemma 4.1]{FH17} and \cite[Prop. 6.1]{PS10}}] \label{smoothextconditions}
Let $\omega= F(t) \eta^{se} \wedge dt+ G_0(t) \omega^{se}_0+ G_1(t) \omega^{se}_1+ G_2(t)\omega^{se}_2+ G_3(t)\omega^{se}_3$ an invariant two form on $(0,T) \times N_{1,1}$. Then
\begin{enumerate}[label=(\roman*)]
    \item $\omega$ extends over a singular orbit $\SU(2)^2/ \SU(2)\times \U(1)$ at $t=0$ if and only if
        \begin{enumerate}
            \item $G_0, G_1, G_2, G_3$ are even and $F$ is odd;
            \item $G_2(0)=G_3(0)=0$ and $G_0(t)- G_1(t) = -\partial_t F(0)t^2 + O(t^4)$.
        \end{enumerate}
    \item $\omega$ extends over a singular orbit $\SU(2)^2/ \triangle \SU(2)$ at $t=0$ if and only if
    \begin{enumerate}
            \item $G_0, G_1, G_2$ are odd and $G_3, F$ are even;
            \item $G_0(t)+ G_2(t) =O(t^3)$, $G_3(t)= O(t^2)$ and $G_1(t)= 2F(0)t + O(t^3)$.
        \end{enumerate}
\end{enumerate}
\end{lemma}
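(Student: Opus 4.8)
The plan is to establish Lemma~\ref{smoothextconditions} by applying the Eschenburg--Wang criterion \cite{EW00} for the smooth extension of an invariant tensor across a singular orbit (as in \cite{FH17,PS10}) to the two possible collapses of the principal orbit $N_{1,1}=\SU(2)^2/\triangle\U(1)$. In each case fix the singular orbit $Q=G/K$, $G=\SU(2)^2$, with principal isotropy $H=\triangle\U(1)\subset K$, and let $V$ be the slice representation, so that by the slice theorem a tubular neighbourhood of $Q$ is $G$-equivariantly $G\times_K D(V)$, with $D(V)$ the unit normal disk, $S(V)=K/H$, and radial coordinate the cohomogeneity parameter $t$. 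For case (i), $K=\SU(2)\times\U(1)$; one computes $K/H\cong S^3$ and $V\cong\C^2$ as a $K$-module. For case (ii), $K=\triangle\SU(2)$, $K/H\cong S^2$ and $V\cong\R^3=\mathfrak{su}(2)$ with the adjoint action. An invariant $2$-form on the punctured tube is the same datum as a $K$-equivariant map from $D(V)\setminus\{0\}$ into the fibre of $\Lambda^2T^*(G\times_K D(V))$ over $o\in Q$, and by Eschenburg--Wang it extends smoothly over $Q$ if and only if, written in a fixed smooth frame, this map extends smoothly over $0\in V$. Restricting along a normal geodesic recovers $F$ and the $G_i$ once one divides out the (explicitly known) degeneration of the Sasaki--Einstein coframe $(\eta^{se},\omega^{se}_i)$ as $t\to 0$; the two families of conditions in the statement are exactly the resulting constraints, and the proof splits accordingly.

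First, the parity conditions (a). Pick $k_0\in K$ reversing a fixed normal geodesic, i.e.\ acting as $-\mathrm{Id}$ on the line $\R\dot\gamma(0)\subset V$: in case (i) take $k_0=(-\mathrm{Id},1)\in\SU(2)\times\U(1)$, acting as $-\mathrm{Id}$ on $\C^2$; in case (ii) take the $\pi$-rotation in $\SO(3)=\Ad(\SU(2))$ about an axis orthogonal to $\dot\gamma(0)$. Since $k_0^2=1$ and $\omega$ is $G$-invariant, the values $\omega_{\gamma(t)}$, transported into a fixed copy of $\Lambda^2(\R^6)^*$, satisfy $\omega_{\gamma(-t)}=(\bar k_0^{-1})^*\,\omega_{\gamma(t)}$, so each component splits into an even function of $t$ (in the $+1$ eigenspace of $\bar k_0$) and an odd one (in the $-1$ eigenspace). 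Computing the action of $k_0$ on $\eta^{se}\wedge dt$ and on each $\omega^{se}_i$ --- using Remark~\ref{remarkreeb} for the Reeb direction (fixed by $k_0$ in case (i), reflected in case (ii)) together with $k_0^*dt=-dt$ --- and absorbing the known $t$-scalings of these forms near $Q$, one reads off precisely the stated even/odd assignments of $F,G_0,G_1,G_2,G_3$.

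Next, the vanishing-order conditions (b). Here one examines, for $t\to 0$, how each invariant form restricts to the distance-$t$ sphere $S(V)$ in the slice: a form whose restriction is a top-degree form on the collapsing sphere (or whose norm relative to a smooth frame blows up) forces its coefficient to vanish, giving $G_2(0)=G_3(0)=0$ in case (i) and $G_3(t)=O(t^2)$ in case (ii). The remaining refined conditions couple low-order coefficients of the $G_i$ to the derivative of $F$ at $0$: writing the Taylor expansion at $o\in Q$ of the smooth extension as (value at $o$) $+\tfrac12(\text{second order})\,t^2+O(t^4)$, the leading term is a $K$-invariant element of $\Lambda^2T^*_oM$ and hence lies in a $1$- or $2$-dimensional subspace, which imposes linear relations among $\{G_i(0)\}$ and $F(0)$; the coefficients $-\partial_t F(0)$ (case (i)) and $2F(0)$ (case (ii)) are then pinned down by the structure equations $d\eta^{se}=-2\omega^{se}_1$, $d\omega^{se}_2=3\eta^{se}\wedge\omega^{se}_3$, $d\omega^{se}_3=-3\eta^{se}\wedge\omega^{se}_2$, which relate the $F$-carrying term $F\,\eta^{se}\wedge dt$ to the $\omega^{se}_i$ at the next order in $t$. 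For the converse (the ``if'' directions) one reassembles $\omega$, given (a)--(b), from manifestly smooth building blocks on $D(V)$ --- smooth functions of $t^2$ (the squared normal distance), the smooth symplectic-type $2$-form produced by $F\,\eta^{se}\wedge dt$, and $K$-invariant forms pulled up from $V$ --- and verifies smoothness directly.

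The main obstacle is this last computation: identifying exactly which $G$-invariant forms on the punctured tube arise as boundary values of smooth forms on $D(V)$, and pinning down the numerical coefficients $-\partial_t F(0)$ and $2F(0)$ in (b). This requires an explicit adapted coordinate system on the slice $D(V)$ in which the Sasaki--Einstein coframe and the collapsing directions are simultaneously controlled, together with careful bookkeeping of the $t$-scaling of each form and of orientation and normalisation conventions consistent with \eqref{2to3} and with the structure equations for $(\eta^{se},\omega^{se}_i)$; by comparison the reduction to the slice and the parity step are essentially formal.
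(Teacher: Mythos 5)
Note first that the paper does not actually prove Lemma \ref{smoothextconditions}: it is quoted from \cite{FH17} (Lemma 4.1) and \cite{PS10} (Prop. 6.1), so there is no internal proof to compare against. Your framework is the right one and is the one used in those references: the slice theorem plus the Eschenburg--Wang smoothness criterion \cite{EW00}, with the two slice representations correctly identified ($K=\SU(2)\times\U(1)$ acting on $V\cong\C^2$ with collapsing sphere $S^3$ for case (i), and $K=\triangle\SU(2)$ acting on $V\cong\R^3$ with collapsing sphere $S^2$ for case (ii)), and the parity conditions (a) obtained from a geodesic-reversing element of $K$ is the standard and essentially formal step, as you say.

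The genuine gap is the one you yourself flag as the ``main obstacle'': everything that is specific to the statement, namely the conditions (b) with their exact coefficients ($G_2(0)=G_3(0)=0$ and $G_0-G_1=-\partial_tF(0)\,t^2+O(t^4)$ in case (i); $G_0+G_2=O(t^3)$, $G_3=O(t^2)$, $G_1=2F(0)\,t+O(t^3)$ in case (ii)), is asserted rather than derived, and the ``if'' direction is only gestured at. To obtain these one must actually compute, in each case, a generating set for the $K$-invariant $2$-forms on the slice $V$ with smooth coefficients and re-express those generators in the degenerating coframe $(dt,\eta^{se},\omega^{se}_0,\dots,\omega^{se}_3)$; this is simultaneously what produces the numerical coefficients $-\partial_tF(0)$ and $2F(0)$ and what shows that no conditions beyond the stated low-order ones (together with parity) arise, which is needed for sufficiency. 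Your proposed mechanism for pinning down the coefficients --- the structure equations $d\eta^{se}=-2\omega^{se}_1$, $d\omega^{se}_2=3\eta^{se}\wedge\omega^{se}_3$, $d\omega^{se}_3=-3\eta^{se}\wedge\omega^{se}_2$ --- is not by itself valid: smooth extension of a tensor across the singular orbit is a pointwise condition on the components in a smooth frame and does not involve exterior derivatives. At best, applying $d$ to smooth invariant $1$-forms such as $t^2\eta^{se}$ produces \emph{some} smooth $2$-forms coupling the $F$-term to the $\omega^{se}_i$, but one must still determine which multiples $f(t)\,\eta^{se}$ are themselves smooth and exhibit a complete list of generators; without that slice computation (carried out in \cite{FH17} and \cite{PS10}) the proposal is a correct plan rather than a proof.
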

Under the conditions of the lemma, the ODE system \eqref{NKsystem} gives rise to a singular ODE initial value problem. Foscolo and Haskins argue the existence and uniqueness of the solution to the ODE by formally solving it in terms of a power series and then applying a contraction mapping fixed point argument.

\begin{theorem}[{\cite[Thm. 4.4 \& 4.5]{FH17}}] $~$
 For each $a>0$, there exists a unique solution to \eqref{NKsystem} that extends smoothly over the singular orbit $\SU(2)^2/ \SU(2)\times \U(1)$, denoted by $\Psi_a(t)$. Similarly, for each $b>0$, there exists a unique solution to \eqref{NKsystem} that extends smoothly over the singular orbit $\SU(2)^2/ \triangle \SU(2)$, denoted by $\Psi_b(t)$.
\end{theorem}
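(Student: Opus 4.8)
The plan is to follow the strategy indicated above: convert the smooth-extension problem into a singular initial value problem for the ODE system \eqref{NKsystem}, solve it formally as a power series in $t$, and then upgrade the formal solution to a genuine smooth one by a fixed-point argument on a weighted function space near $t=0$.

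First I would translate the smooth-extension conditions. The data $(\lambda,\underline{u},\underline{v})$ are built from the coefficient functions $F,G_0,\dots,G_3$ of $\omega$ (and of $\eta\wedge\omega_3$) appearing in Lemma \ref{smoothextconditions}; substituting the parity and vanishing requirements of part (i) (resp. part (ii)) of that lemma into the change of variables pins down the parity of each of $\lambda,u_0,u_1,u_2,v_0,v_1,v_2$ and the orders of vanishing of the combinations that must degenerate at the collapsing orbit. Crucially, the radius of the singular orbit is encoded in one undetermined constant, which becomes the parameter $a$ (resp. $b$). Note that \eqref{NKsystem} is genuinely singular at $t=0$ because of the $u_2/\lambda$ term in \eqref{5f} and the $\lambda^4u_1$, $\partial_t u_2^2$ terms in the constraint \eqref{5g}, so the Picard--Lindel\"of theorem does not apply directly.

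Next I would carry out the formal analysis: insert the ansatz $\lambda(t)=\sum_k\lambda_k t^k$, $u_i(t)=\sum_k u_{i,k}t^k$, $v_i(t)=\sum_k v_{i,k}t^k$ with the parities forced above, and solve the resulting recursion order by order. The key point is an indicial/resonance computation showing that at every order the coefficients are uniquely determined by the lower-order ones, except at one resonant order where a single free parameter ($a$, resp.\ $b$) enters, and that no incompatible equation ever arises. This establishes existence and uniqueness of the formal solution given $a$; along the way one checks that the quantities $I_1,\dots,I_4$ of \eqref{conservedquantities} vanish to all orders at $t=0$, hence—being conserved by \eqref{5g}—vanish identically, and that the open conditions $u_2<0$, $\lambda,\mu^2>0$, $u_1v_2-u_2v_1>0$ hold for small $t>0$, so the half really defines an invariant nearly K\"ahler structure.

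Finally I would prove convergence and smoothness. Subtracting a high-order truncation of the formal series and writing the remainder as $x=(\lambda,\underline{u},\underline{v})$-deviation, the system becomes a fixed-point equation $x=\mathcal{F}(x)$; choosing a Banach space of functions on $(0,\epsilon)$ with a weighted norm adapted to the leading $t$-asymptotics (so that the singular coefficients act boundedly), one shows $\mathcal{F}$ is a contraction for $\epsilon$ small, giving a unique smooth solution on $(0,\epsilon)$ with the prescribed expansion. Uniqueness of $\Psi_a$ among all smooth solutions with singular-orbit radius $a$ follows because any such solution matches the formal series to all orders, hence coincides with the fixed point; standard ODE continuation then extends it to a maximal interval. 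I expect the main obstacle to be precisely the indicial analysis—locating the single resonant order responsible for the free parameter and verifying that the recursion is otherwise unobstructed—together with the choice of weighted norm that keeps the genuinely singular terms of \eqref{5f}--\eqref{5g} under control in the contraction estimate.
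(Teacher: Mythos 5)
Your proposal matches the approach the paper attributes to Foscolo--Haskins for this cited result: translate the smooth-extension conditions of Lemma \ref{smoothextconditions} into a singular initial value problem for \eqref{NKsystem}, solve it formally as a power series (with the singular-orbit size entering as the single free parameter $a$, resp.\ $b$), and then upgrade to a genuine solution by a contraction mapping argument, checking that the conserved quantities $I_1,\dots,I_4$ vanish. Since the paper itself only cites and summarises this proof rather than reproducing it, there is nothing further to compare; your outline is consistent with that strategy.
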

The first terms of each of the Taylor expansions were worked out by Foscolo and Haskins and are collected in  Appendix \ref{appendix}.
\subsection{Complete nearly Kähler solutions} \label{cohomsolNK}
Once we have nearly Kähler halves, we need to match two such halves to construct a complete solution. In that direction, we have 
\begin{proposition}[{\cite[Prop. 5.15]{FH17}}] Let $\Psi(t)$ be a solution to \eqref{NKsystem} extends smoothly over the singular orbit. Then, $\Psi(t)$ has a unique maximal volume orbit: A unique $T_*$ exists for which the nearly hypo structure on $N_{1,1}$ has mean curvature zero.    
\end{proposition}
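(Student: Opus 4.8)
The strategy is to reduce the statement to a monotonicity property of a single scalar. On the principal part $(0,T)\times N_{1,1}$ the cohomogeneity one metric takes the form $g = dt^2 + g_t$, where $g_t$ is the metric on the orbit $\Sigma_t \cong N_{1,1}$ induced by $(\eta,\omega_i) = \psi_{\lambda,\mu,A}(\eta^{se},\omega^{se}_i)$. Using $\eta \wedge \omega_1 \wedge \omega_1 = 2\dvol_{\Sigma_t}$ together with $\eta = \lambda\eta^{se}$, $\omega_1 = \mu A\omega^{se}_1$, and the fact that $A \in \SO_0(1,3)$ acts on the $\omega^{se}_i$ by an isometry of the pairing $\alpha \mapsto \eta^{se}\wedge\alpha\wedge\alpha$ (so that $\eta^{se}\wedge(A\omega^{se}_1)^2 = \eta^{se}\wedge(\omega^{se}_1)^2$), the orbit volume is $V(t) := \vol(\Sigma_t, g_t) = c\,\lambda(t)\,\mu(t)^2$ for a fixed constant $c > 0$. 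Pointwise $H(t) := \partial_t \log\sqrt{\det g_t} = \frac{1}{2}\operatorname{tr}(g_t^{-1}\partial_t g_t) = \operatorname{tr}(S_t)$ is the trace of the shape operator $S_t$ of $\Sigma_t \subset M$, so $H = (\log V)'$ and the orbit at $T_*$ has vanishing mean curvature precisely when $V'(T_*) = 0$; by Lemma~\ref{cohomdiff}(viii)--(ix) this reads $V' = c\,\mu\,(3\mu y_2 + 2\lambda^2 x_1) = 0$. It therefore suffices to show that $H$ has a unique zero at which it changes sign, for then that orbit is the unique (global) maximum of $V$.

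The first step is that $H$ is strictly --- indeed uniformly --- decreasing. This is the radial Riccati equation for $g = dt^2 + g_t$, namely $\partial_t S_t + S_t^2 = -R_g(\,\cdot\,,\partial_t)\partial_t$; taking traces gives $H'(t) = -\norm{S_t}^2 - \operatorname{Ric}_g(\partial_t,\partial_t)$, and since a nearly K\"ahler six-manifold is Einstein with $\operatorname{Ric}_g = 5g$ we get $H'(t) = -\norm{S_t}^2 - 5 \leq -5 < 0$. In particular $H$ is injective, so $V$ has at most one critical orbit.

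The second step is the boundary behaviour. As $t \to 0^+$ the principal orbits collapse onto the singular orbit, and from the Taylor data for $\Psi_a,\Psi_b$ in Appendix~\ref{appendix} (equivalently, from Lemma~\ref{smoothextconditions}) $V = c\lambda\mu^2$ vanishes to a finite positive order in $t$, so $H = (\log V)' \to +\infty$. Combined with $H' \leq -5$ this is decisive: were $H$ positive throughout the maximal interval of definition $(0,T)$, then fixing $t_0 \in (0,T)$ the bound $H(t) \leq H(t_0) - 5(t-t_0)$ would force $T \leq t_0 + H(t_0)/5 < \infty$; on $[t_0,T)$ one would have $0 < H \leq H(t_0)$, hence $V$ trapped between two positive constants and $\int_{t_0}^{T}\norm{S_t}^2\,dt = H(t_0) - \lim_{t\to T}H(t) - 5(T - t_0) < \infty$; this a priori control forces $g_t$ --- and hence $\Psi(t)$ --- to converge to an interior point of the region $\{\,u_2 < 0,\ \mu^2 > 0,\ u_1v_2 - u_2v_1 > 0\,\}$, so the solution would extend past $T$, contradicting maximality. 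Hence $H$ vanishes at a unique $T_* \in (0,T)$; since $\operatorname{sign}V' = \operatorname{sign}H$, $V$ strictly increases on $(0,T_*)$ and strictly decreases on $(T_*,T)$, so $T_*$ is the unique maximal volume orbit.

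The genuinely delicate point is this last step --- that the domain of $\Psi_a$ (resp.\ $\Psi_b$) reaches the zero of $H$ --- since \eqref{NKsystem} is non-explicit and the a priori control on $g_t$ has to be obtained by hand; the monotonicity of $H$, by contrast, is essentially free from the general structure of cohomogeneity one Einstein metrics. (Alternatively one may simply quote the Foscolo--Haskins existence theory, which already produces halves defined beyond $T_*$, so that this step is unnecessary.) A purely ODE-theoretic variant --- differentiate $V' = c\mu(3\mu y_2 + 2\lambda^2 x_1)$ once more using \eqref{NKsystem} and the vanishing of $I_1,\dots,I_4$, and check that $V'' < 0$ at every critical point --- also works, but is computationally heavier and relies on the sign information for $w_1,w_2$ (cf.\ Prop.~\ref{signw1w2}).
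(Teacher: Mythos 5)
The paper does not actually prove this statement: it is imported verbatim from \cite[Prop.\ 5.15]{FH17}, so there is no internal proof to compare against. Your plan follows what is essentially the Foscolo--Haskins route, and its first two steps are correct and cleanly argued: $V=c\,\lambda\mu^2$ (using that $A\in\SO_0(1,3)$ preserves the wedge pairing), $H=(\log V)'$ is the mean curvature, the traced Riccati equation together with $\operatorname{Ric}=5g$ gives $H'=-\norm{S_t}^2-5\leq -5$, and the Taylor expansions at the singular orbit give $H\to+\infty$ as $t\to 0^+$. This yields at most one zero of $H$, and a sign change there, hence uniqueness of the critical (and maximal) volume orbit.

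The genuine gap is exactly where you flag "the delicate point", and as written it is not closed. From $0<H\leq H(t_0)$ on $[t_0,T)$ you get $V=c\lambda\mu^2$ pinched between positive constants and $\int_{t_0}^T\norm{S_t}^2\,dt<\infty$, but the assertion that "this a priori control forces $g_t$ --- and hence $\Psi(t)$ --- to converge to an interior point of the region" is precisely the content that needs proof. Pinching of $\lambda\mu^2$ does not by itself prevent $\lambda\to 0$ with $\mu\to\infty$ (or vice versa), and boundedness of $(\lambda,\mu)$ does not bound $(\underline{u},\underline{v})$, since these involve columns of $A$ and $\SO_0(1,3)$ is non-compact; one must argue (e.g.\ via $\int\norm{\partial_t g_t}_{g_t}\,dt<\infty$ giving uniform equivalence and $C^0$-convergence of $g_t$ to a non-degenerate invariant metric, then convergence of the positive $3$-plane in $\SO_0(1,3)/\SO(3)$ and compactness of the stabiliser) that $(\lambda,\underline{u},\underline{v})$ stays in a compact subset of the locus where the ODE \eqref{NKsystem} is regular, so that the constraint \eqref{5g} can still be solved for $\partial_t\lambda$ and the solution continues past $T$. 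None of this is in the write-up. Moreover, the proposed shortcut in the final parenthesis is circular: the Foscolo--Haskins existence theorem only produces a maximal half, and whether that maximal interval contains a zero of $H$ is exactly what \cite[Prop.\ 5.15]{FH17} asserts, so it cannot be "quoted" to dispense with this step. Either carry out the compactness/extension argument sketched above, or make the alternative $V''<0$ computation rigorous; as it stands the proof establishes uniqueness but not existence of $T_*$.
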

Thus, it is reasonable to match two nearly Kähler halves along their maximum volume orbits. Let $\Psi^1$ and $\Psi^2= \widetilde{\Psi}$ be two solutions for the system \eqref{NKsystem} with maximum volume orbit at time $T_*^1$ and $T_*^2$, and assume that the two maximal volume orbits coincide, so $\left(\lambda(T^1_*), \mu(T^1_*)\right)=\left(\tilde{\lambda}(T^2_*), \tilde{\mu}(T^2_*)\right)$. In particular, the two solutions must coincide on the maximum volume orbit up to the action of the involutions \eqref{involutions}. Acting by a time translation $\tau= T_1 + T_2 -t$ and $\tau_1$ or $\tau_2$, we consider
$$ \Psi_2^\pm (t) =  \left(\tilde{\lambda}(\tau), \mp \tilde{u}_0(\tau), - \tilde{u}_1(\tau), \tilde{u}_2(\tau),  \pm \tilde{v}_0(\tau), \tilde{v}_1(\tau), -\tilde{v}_2(\tau)\right) \;.$$
We define the two solutions
\begin{subequations}
    \begin{align}
        \Psi(t) &=  \begin{cases} \Psi_1(t) &~~ 0 \leq t\leq T_1\\ \Psi_2^+(t) &~~ T_1 \leq t\leq T_1 + T_2        \end{cases} \label{Psi_+} \;,\\ 
        \Psi(t) &=  \begin{cases} \Psi_1(t) &~~ 0 \leq t\leq T_1\\ \Psi_2^-(t) &~~ T_1 \leq t\leq T_1 + T_2        \end{cases} \label{Psi_-} \;. 
    \end{align}
\end{subequations}
If either solution is smooth, we will have a complete nearly Kähler manifold. The following lemma details the conditions for this to happen when both halves are isometric:
\begin{lemma}[{Doubling lemma, {\cite[Lemmas 5.19 \& 8.4]{FH17}}}] \label{doublingNK}
Let $a\in (0, \infty)$  and consider $\Psi_a(t)$ the corresponding nearly Kähler half with singular orbit $S^2$. Denote by $T_a$ the time of maximum volume orbit.
\begin{enumerate}
    \item If $w_1(T_a)=0$, then \eqref{Psi_-} with $\Psi_1=\Psi_2= \Psi_a$ defines a smooth nearly Kähler structure on $\C P^3$.
    \item If $w_2(T_a)=0$, then \eqref{Psi_+} with $\Psi_1=\Psi_2= \Psi_a$ defines a smooth nearly Kähler structure on $S^2 \times S^4 $.
\end{enumerate}
Similarly, let $b\in (0, \infty)$  and consider $\Psi_b(t)$ the corresponding nearly Kähler half with singular orbit $S^3$. Denote by $T_b$ the time of maximum volume orbit. If $w_1(T_b)=0$ (resp. $w_2(T_b)=0$), then  \eqref{Psi_-} (resp. \eqref{Psi_+}) with $\Psi_1=\Psi_2= \Psi_b$ defines a smooth cohomogeneity one nearly Kähler structure on $S^3\times S^3$.
\end{lemma}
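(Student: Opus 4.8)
The plan is to realise the complete manifold as the \emph{reflection double} of the half $\Psi_a$ (resp.\ $\Psi_b$) across its maximal volume orbit, reducing smoothness of the doubled structure to one algebraic condition on the half at that orbit. Write $T$ for $T_a$ (resp.\ $T_b$), and let $\tau_1,\tau_2$ also denote the linear involutions of $(\lambda,\underline u,\underline v)$ underlying the symmetries \eqref{involutions} (i.e.\ forgetting the time reversal $t\mapsto -t$), with fixed-point sets $\operatorname{Fix}(\tau_1)=\{u_0=u_1=v_2=0\}$ and $\operatorname{Fix}(\tau_2)=\{u_1=v_0=v_2=0\}$. Taking $\Psi_1=\Psi_2=\Psi_a$ the two maximal volume orbits automatically coincide, so in \eqref{Psi_+}, \eqref{Psi_-} we take $T_1=T_2=T$, $\tau=2T-t$, and then $\Psi_2^{+}(t)=\tau_1(\Psi_a(2T-t))$ and $\Psi_2^{-}(t)=\tau_2(\Psi_a(2T-t))$. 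Each of \eqref{Psi_+}, \eqref{Psi_-} is by construction an invariant nearly Kähler $\SU(3)$-structure on $(0,2T)\times N_{1,1}$, smooth away from $t=T$ and, by the existence theorem for $\Psi_a$, extending smoothly over the singular orbit at $t=0$. It therefore remains to check (a) smoothness across $t=T$; (b) smoothness over a second singular orbit at $t=2T$; and (c) the diffeomorphism type of the resulting closed manifold.

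For (a), note that $t=T$ is a \emph{regular} point of \eqref{NKsystem}, since the open conditions $\lambda,\mu>0$, $u_2<0$, $u_1v_2-u_2v_1>0$ all hold on the interior of the half; solutions are therefore real-analytic and unique, and in particular $\Psi_a$ prolongs as a solution to a two-sided neighbourhood of $T$. Suppose that $\Psi_a(T)\in\operatorname{Fix}(\tau_i)$ for the $\tau_i$ attached to the chosen gluing. Then $t\mapsto\tau_i(\Psi_a(2T-t))$ is a solution of \eqref{NKsystem} equal to $\tau_i(\Psi_a(T))=\Psi_a(T)$ at $t=T$, hence by uniqueness equal to $\Psi_a$ near $T$; so the half is itself symmetric about $T$, the glued curve \eqref{Psi_+} (resp.\ \eqref{Psi_-}) is just $\Psi_a$ prolonged to $[0,2T]$, and it is real-analytic at $t=T$. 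Since $\mu=|\underline u|>0$ and $\lambda\mu^2>0$ on the interior, the $\SU(2)$-frame $(\eta,\omega_i)$ and hence $(\omega,\rho)$ depend real-analytically on $t$ there, so the structure is smooth across $t=T$. Thus (a) reduces to the $C^0$-matching statement: $w_1(T)=0\Rightarrow\Psi_a(T)\in\operatorname{Fix}(\tau_2)$ and $w_2(T)=0\Rightarrow\Psi_a(T)\in\operatorname{Fix}(\tau_1)$.

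To prove this, combine three inputs at $t=T$. First, the maximal volume condition: the orbit volume is $\lambda\mu^2\,\vol^{se}$, so by Lemma \ref{cohomdiff}(viii)--(ix) one has $\partial_t(\lambda\mu^2)=\mu(3\mu y_2+2\lambda^2 x_1)$, and $T$ being the maximal volume orbit is equivalent to $3\mu v_2+2\lambda^3u_1=0$ (recall $x_1=u_1/\mu$, $y_2=v_2/(\lambda\mu)$). Second, $w_1(T)=0$, i.e.\ $u_0v_2=u_2v_0$ with $u_2=-\lambda\mu$. Third, vanishing of the conserved quantities, giving $u_2=-\lambda\mu$, $v_1=|\underline u|^2=\mu^2$, $|\underline v|^2=\lambda^2\mu^2$ and $\langle\underline u,\underline v\rangle=0$. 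Substituting the maximal-volume relation and $w_1(T)=0$ into $\langle\underline u,\underline v\rangle=0$ yields an identity $u_1\cdot Q=0$ with $Q$ rational in $\lambda,\mu,u_0$; if $u_1(T)\neq 0$ one solves for $u_0^2$ from $Q=0$, feeds this together with $|\underline v|^2=\lambda^2\mu^2$ back into $|\underline u|^2=\mu^2$, and after simplification obtains $5\lambda^2=0$, a contradiction. Hence $u_1(T)=0$; the maximal-volume relation then forces $v_2(T)=0$, and $w_1(T)=0$ forces $v_0(T)=0$, so $\Psi_a(T)\in\operatorname{Fix}(\tau_2)$. The case $w_2(T)=0$ is parallel: there $w_2(T)=0$ reads $u_1v_0=\mu^2u_0$, the analogous manipulation gives the contradiction $3=-2$ if $u_1(T)\neq 0$, and one concludes $u_0(T)=u_1(T)=v_2(T)=0$, i.e.\ $\Psi_a(T)\in\operatorname{Fix}(\tau_1)$. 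It is essential to use all the conserved quantities simultaneously, and to pair $w_1$ with $\tau_2$ (that is, with \eqref{Psi_-}) and $w_2$ with $\tau_1$ (i.e.\ \eqref{Psi_+}); the same computation applies verbatim to $\Psi_b$.

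For (b), near $t=2T$ the glued solution equals $\tau_i(\Psi_a(2T-t))$, and since $\Psi_a$ satisfies the Eschenburg--Wang conditions of Lemma \ref{smoothextconditions} at $t=0$, so does $\tau_i(\Psi_a(2T-\cdot\,))$ at $t=2T$: the linear map $\tau_i$ and the reflection $t\mapsto 2T-t$ preserve the parities of the coefficient functions of $\omega$ (and of $\rho$) and their leading-order behaviour, at the cost of replacing the singular isotropy group by its $\tau_i$-conjugate. One thus obtains a smooth invariant $\SU(3)$-structure on the closed $6$-manifold $M$ got by gluing two tubular neighbourhoods of singular orbits along a principal orbit $N_{1,1}$; the nearly Kähler equations \eqref{NHeq}--\eqref{NHevo} hold on the dense open set of principal orbits and hence on all of $M$ by continuity. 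For (c), $M$ is determined by the two singular isotropy groups and the gluing exactly as in \cite{FH17}: for $\Psi_a$ the singular orbit is $S^2=\SU(2)^2/\SU(2)\times\U(1)$, and the double is $\C P^3$ for the $\tau_2$-gluing --- consistently with the homogeneous $\C P^3$, for which $\lambda\equiv\mu$ and correspondingly our argument forces $\mu(T)=\lambda(T)$ --- and $S^2\times S^4$ for the $\tau_1$-gluing (which forces $\lambda(T)=1$); for $\Psi_b$ the singular orbit is $S^3=\SU(2)^2/\triangle\SU(2)$ and the double is $S^3\times S^3$ in both cases. I expect the computation in (a) to be the main obstacle: it is not evident a priori that the single scalar condition $w_1(T)=0$ (resp.\ $w_2(T)=0$) together with maximality of the volume should pin down three of the ODE variables, and making this transparent needs the full set of conserved quantities rather than $I_1$ alone; the identification (c) is routine but rests on the Eschenburg--Wang bookkeeping of \cite{FH17}.
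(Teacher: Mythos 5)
The paper does not actually prove this lemma --- it is imported from \cite{FH17} (Lemmas 5.19 \& 8.4) --- and your reconstruction follows essentially that same route: show that criticality of the orbit volume $\lambda\mu^2$, the vanishing of $w_1$ (resp.\ $w_2$) and the four conserved quantities force $\Psi(T)$ into the fixed locus of $\tau_2$ (resp.\ $\tau_1$), then double by ODE uniqueness at the regular point $T$ and check smooth extension over the reflected singular orbit via the Eschenburg--Wang conditions. I verified your key algebraic step --- the dichotomy $u_1\cdot Q=0$ with the $5\lambda^2=0$ contradiction in both cases, the correct pairing of $w_1$ with \eqref{Psi_-}/$\tau_2$ and $w_2$ with \eqref{Psi_+}/$\tau_1$, and the byproducts $\mu(T)=\lambda(T)$, resp.\ $\lambda(T)=1$ --- and it is correct.
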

\begin{comment}
\begin{lemma}[{Matching lemma, {\cite[Lemma 5.20, 8.4]{FH17}}}] $~$
\begin{enumerate}
    \item Suppose that there exist  $a < a' \in (0, \infty)$ such that $\left(w_1(T_a), w_2(T_a) \right)= \pm \left(w_1(T_{a'}), -  w_2(T_{a'}) \right)$. Set $\Psi_1= \Psi_a$  and $ \Psi_2= \Psi_{a'}$. Then either \eqref{Psi_+}  defines a smooth nearly Kähler on $S^2\times \S^4$ or \eqref{Psi_-} defines one on $\C P^3$.
    
    \item Suppose that there exist  $b < b' \in (0, \infty)$ such that $\left(w_1(T_b), w_2(T_b) \right)= \pm \left(w_1(T_{b'}), -  w_2(T_{b'}) \right)$. Then either \eqref{Psi_+} or \eqref{Psi_-} defines a smooth nearly Kähler structure  on $S^3\times S^3 $ for $\Psi_1= \Psi_b$  and $ \Psi_2= \Psi_{b'}$.
    
    \item Suppose that there exist  $a, b  \in (0, \infty)$ such that $\left(w_1(T_a), w_2(T_a) \right)= \pm \left(w_1(T_b), -  w_2(T_b) \right)$. Then either \eqref{Psi_+} or \eqref{Psi_-} defines a smooth nearly Kähler structure  on $S^6 $ for $\Psi_1= \Psi_a$  and $ \Psi_2= \Psi_{b}$.
\end{enumerate}
\end{lemma}
\end{comment}
We now outline the proof of Foscolo and Haskins on the structure of an inhomogeneous nearly Kähler structure on $S^3\times S^3$ using the result in Lemma \ref{doublingNK}. Consider the curve
\begin{align*}
    \beta:  (0, \infty) &\rightarrow \R^2 \\
     b &\mapsto \left( w^b_1(T_b) ,  w^b_2(T_b)\right) \;,
\end{align*}
For small $b$, the nearly Kähler half converges to the sine-cone, and so $\lim_{b \rightarrow 0} \beta = (0,0)$. Moreover, the homogeneous nearly Kähler structure on $S^3\times S^3$ corresponds to $b=1$ and $\beta(1)=\left(\frac{\sqrt{3}}{3} ,0 \right)$. Foscolo and Haskins prove
\begin{theorem}[{\cite[Thm. 7.12]{FH17}}] \label{completeS3S3}
    There exists $b_* \in (0,1)$ such that $\beta(b_*)=\left(0, w_2(b_*)\right)$. By Lemma \ref{doublingNK}, the nearly Kähler solution \eqref{Psi_-} with $\Psi_1=\Psi_2= \Psi_{b_*}$ defines a smooth nearly Kähler structure on $S^3\times S^3$.
\end{theorem}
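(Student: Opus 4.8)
The plan is to apply the intermediate value theorem to the first coordinate $f(b):=w_1^b(T_b)$ of $\beta$ and then invoke Lemma \ref{doublingNK}. Two of the three ingredients are already in hand from the discussion preceding the statement: $f(b)\to 0$ as $b\to 0^+$, since the half degenerates to the sine cone of Example \ref{sinecone-param} on which $w_1\equiv w_2\equiv 0$ (Cor. \ref{246}); and $f(1)=\tfrac{\sqrt3}{3}>0$, by locating $T_1$ from the vanishing of the mean curvature and substituting into the explicit homogeneous solution \eqref{homogeneousS3S3}. Continuity of $f$ (and of $b\mapsto w_2^b(T_b)$) on $(0,\infty)$ is routine: $\Psi_b$ is produced in \cite{FH17} by a power series and a contraction as a family depending continuously on $b$, so $\Psi_b$ varies continuously in $b$ on compact $t$-intervals by continuous dependence; $T_b$ is the unique simple zero of the mean curvature of the induced nearly hypo structure (\cite{FH17}), hence continuous in $b$; and $w_1,w_2$ are smooth functions of $(\lambda,\underline u,\underline v)$ away from the collapse locus via the formula for $A$ and Lemma \ref{cohomdiff2}.

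The content of the theorem is therefore the single point that $f$ takes a \emph{negative} value somewhere on $(0,1)$ — knowing only $f(0^+)=0$ and $f(1)>0$ is not enough. The natural way to see this is to determine the sign with which the sine-cone limit is approached as $b\to 0$: linearise the system \eqref{NKsystem} about the sine-cone solution; the leading deviation of $\Psi_b$ from the sine cone then solves this linear system, with initial data at the singular $S^3$ orbit fixed by the smooth-extension Taylor coefficients recorded in Appendix \ref{appendix}. Propagating this linear solution forward to $t=T_b$ and extracting the $w_1$-component via the evolution equations \eqref{10b}--\eqref{10c} together with the conserved quantities \eqref{conservedquantities} should show $f(b)$ is a strictly negative multiple of a positive power of $b$ for small $b$. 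This is where essentially all the difficulty lies, and it is the main obstacle: because the Foscolo--Haskins solutions are not explicit, both the linearisation and — especially — pinning down the sign of the leading coefficient demand genuine quantitative control of $\Psi_b$ across the whole interval $(0,T_b)$ (monotonicity of $w_1,w_2$, the signs and evolution of the $I_j$), not a soft argument; an alternative is a winding/degree count for the planar curve $\beta$ on $(0,\infty)$, forcing it to cross the $w_2$-axis, but this too needs global control.

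Granting this, $f$ is continuous on $(0,1]$ with $f(b_0)<0$ for some small $b_0$ and $f(1)>0$, so the intermediate value theorem yields $b_*\in(b_0,1)\subset(0,1)$ with $w_1^{b_*}(T_{b_*})=0$, i.e.\ $\beta(b_*)=\bigl(0,w_2(b_*)\bigr)$; the $w_1(T_b)=0$ case of Lemma \ref{doublingNK} then says that \eqref{Psi_-} with $\Psi_1=\Psi_2=\Psi_{b_*}$ is a smooth cohomogeneity one nearly Kähler structure on $S^3\times S^3$. Moreover $w_2^{b_*}(T_{b_*})\neq 0$: the pair $(w_1,w_2)$ solves the linear system \eqref{10b}--\eqref{10c}, so if it vanished at $t=T_{b_*}$ it would vanish identically, and then Cor. \ref{246} would make $\Psi_{b_*}$ a piece of the sine cone, which does not extend smoothly over a singular orbit — contradicting the construction of $\Psi_{b_*}$. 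Hence the structure has $w_1(T_{b_*})=0\neq w_2(T_{b_*})$, unlike the homogeneous one (which arises as the $b=1$ member with $w_2(T_1)=0\neq w_1(T_1)$), and is therefore inhomogeneous.
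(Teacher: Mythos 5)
There is a genuine gap, and you have located it yourself: the entire content of the theorem is the assertion that the curve $\beta$ actually crosses the vertical axis for some $b\in(0,1)$, and your argument for this — linearise \eqref{NKsystem} about the sine cone, propagate the leading deviation of $\Psi_b$ from the singular orbit to $T_b$, and read off a strictly negative sign for $w_1^b(T_b)$ at small $b$ — is only announced (``should show'', ``granting this''), not carried out. The sign of that leading coefficient is exactly the quantitative heart of the matter; without it the intermediate value argument has nothing to bite on, since $f(0^+)=0$ and $f(1)>0$ are compatible with $f>0$ on all of $(0,1)$. So as written the proposal reduces the theorem to its hardest step and then defers it. (The peripheral parts — continuity of $b\mapsto(w_1^b(T_b),w_2^b(T_b))$, the IVT bookkeeping, the application of Lemma \ref{doublingNK}, and the observation that $w_2^{b_*}(T_{b_*})\neq 0$ via the linear system \eqref{10b}--\eqref{10c} and Corollary \ref{246} — are fine and consistent with the paper's discussion.)

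It is also worth noting that this statement is cited from \cite{FH17} rather than reproved here, and the mechanism Foscolo--Haskins use (summarised just below the theorem) is different from the one you propose: they do not linearise the full nonlinear system about the sine cone. Instead they exploit that $(u_0,v_0)$ satisfies the closed \emph{linear} subsystem \eqref{5a}, \eqref{5d} along any solution $\Psi_b$, relate the zeros of $w_1$ (resp.\ $w_2$) to those of $v_0$ (resp.\ $u_0$), and then run a Sturm comparison of this subsystem against the explicit Legendre problem $\sin(t)\,\partial_t\hat u=-3\hat v$, $\partial_t\hat v=4\sin(t)\hat u$ coming from the sine cone. That route gets the required sign/zero-counting information without any perturbative expansion in $b$, which is precisely what your sketch lacks; even so, it only yields existence of a crossing in $(0,1)$, not uniqueness. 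If you want to complete your argument along your own lines, you would need to actually control the small-$b$ expansion of $\Psi_b$ (including the location of $T_b$) well enough to pin down the sign of $w_1^b(T_b)$, which is a substantial piece of analysis rather than a routine step.
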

Their proof strategy first involves relating the zeros of $w_1$ and $w_2$ with those of $v_0$ and $u_0$, respectively. The functions $u_0$ and $v_0$ satisfy the system \eqref{5a}-\eqref{5d}
$$ \lambda \partial_t u_0 = -3v_0  ~~~~~~~~~~~~~~~ \partial_t v_0 = 4 \lambda u_0\;.$$
In particular, they are amenable to a Sturm comparison argument with the Legendre Sturm-Liouville problem
$$ \sin(t) \partial_t \hat{u} = -3\hat{v} ~~~~~~~~~~~~~~~ \partial_t \hat{v} = 4 \sin(t) \hat{u}\;,$$
which is the linearisation of the system \eqref{NKsystem} on the sine-cone. In their proof, Foscolo and Haskins can only prove that the curve $\beta$ must cross the vertical axis in the range $b \in (0,1)$ but can not establish whether such crossing is unique, although they numerically conjecture this to be the case. 

In any case, there exists $b_* \in (0,1)$ for which the curve $\beta$ crosses the vertical axis for the last time before arriving at the homogeneous structure. For the remainder of the notes, we will refer to the corresponding $\Psi_{b_*}$ (and its complete double) as \textbf{the} inhomogeneous nearly Kähler structure on $S^3 \times S^3$.

We conclude this section by characterising this inhomogeneous nearly Kähler structure, which will be useful when studying its index. Although we do not have an explicit expression for $w_1(t)$ or $w_2(t)$, we can characterise their qualitative behaviour.
\begin{proposition}\label{signw1w2}
    Let $\Psi_{b_*}(t)$ be the nearly Kähler half corresponding to the inhomogeneous nearly Kähler structure on $S^3\times S^3$ described in \cite{FH17}. Then 
 \begin{enumerate}[label=(\roman*)]
    \item $w_1(t)>0$ for $t\in (0, T_*)$, and 
    \item $w_2(T_*)>0$.
 \end{enumerate}
\end{proposition}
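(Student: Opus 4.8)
The plan is to derive sign information for $w_1$ and $w_2$ on the nearly Kähler half $\Psi_{b_\ast}$ by combining three ingredients: the evolution equations of Lemma~\ref{cohomdiff2}, the smooth-extension Taylor data at the singular orbit $S^3$ (the $\triangle\SU(2)$ case of Lemma~\ref{smoothextconditions}, collected in the appendix), and the fact that $b_\ast$ is the \emph{last} crossing of the vertical axis before $b=1$, i.e. $w_1(T_\ast)=0$ but $w_1(T_b)\neq 0$ for $b\in(b_\ast,1]$ with a definite sign. I would track $(w_1,w_2)$ together with the boundary data and the conserved quantities of \eqref{conservedquantities} along the flow.

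For part (i), I would first read off from the $S^3$ smooth-extension conditions that near $t=0$ one has $w_1(t)=c\,t + O(t^3)$ for some constant, and determine the sign of $c$ from the explicit leading Taylor coefficients of $\Psi_b$ in the appendix; this should give $w_1(t)>0$ for small $t>0$. Next I would argue $w_1$ cannot vanish on $(0,T_\ast)$: suppose $w_1(t_1)=0$ for some first such $t_1\le T_\ast$. From \eqref{10b}, $\partial_t w_1 = -2\tfrac{\lambda x_1}{\mu}w_1 - 3\tfrac{\mu}{\lambda^2}w_2$, so at $t_1$ we get $\partial_t w_1(t_1) = -3\tfrac{\mu}{\lambda^2}w_2(t_1)$, forcing $w_2(t_1)\ge 0$ (else $w_1$ would have been increasing through zero, contradicting $w_1>0$ just before). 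Combined with \eqref{10c}, $\partial_t w_2 = 2\tfrac{\lambda^2}{\mu}w_1 - 3\tfrac{y_2}{\lambda}w_2$, at $t_1$ this reads $\partial_t w_2(t_1) = -3\tfrac{y_2}{\lambda}w_2(t_1)$, and one uses the sign of $y_2$ (from $A\in\SO_0(1,3)$ and the structure, or from the relation $u_2=-\lambda\mu<0$ together with $u_1v_2-u_2v_1>0$) to see that such a $t_1$ would force $w_1$ to re-enter positivity only after returning to the maximal volume orbit, i.e. $t_1=T_\ast$ necessarily. The cleaner route is: the relation between the zeros of $w_1$ and those of $v_0$ described after Theorem~\ref{completeS3S3} identifies $w_1(T_\ast)=0$ with $v_0(T_\ast)=0$; then the Sturm comparison of the $(u_0,v_0)$ system \eqref{5a},\eqref{5d} against the Legendre system shows $v_0$ has exactly one zero in the relevant interval, which must be the one at $T_\ast$ since $b_\ast$ is chosen to be the last crossing. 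Hence $w_1>0$ on $(0,T_\ast)$.

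For part (ii), once $w_1(T_\ast)=0$ is established, I would evaluate a conserved quantity or the defining relations of the matrix $A$ at $T_\ast$. Since $\Psi_{b_\ast}$ is \emph{not} the sine-cone (Corollary~\ref{246} would force $w_1\equiv w_2\equiv 0$, contradicting inhomogeneity), and since by Lemma~\ref{doublingNK} the case $w_1(T_\ast)=0$ is exactly the one yielding the $S^3\times S^3$ double, we must have $w_2(T_\ast)\neq 0$. To pin the sign, I would use \eqref{10c} at $T_\ast$: there $\partial_t w_2(T_\ast) = -3\tfrac{y_2}{\lambda}w_2(T_\ast)$, and since $w_1>0$ on $(0,T_\ast)$ with $w_1(T_\ast)=0$ we have $\partial_t w_1(T_\ast)\le 0$, so by \eqref{10b} $w_2(T_\ast)\ge 0$; ruling out equality as above gives $w_2(T_\ast)>0$. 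Alternatively, the orthonormality $w_0^2 = 1 + w_1^2 + w_2^2$ (from $\langle w,w\rangle=-1$ in $\R^{1,2}$) together with continuity in $b$ from the $b\to 0$ limit (where $\beta\to(0,0)$) and the known value $\beta(1)=(\tfrac{\sqrt3}{3},0)$ pins down which component survives.

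**Main obstacle.** The delicate point is part (i): ruling out a zero of $w_1$ strictly inside $(0,T_\ast)$ without an explicit formula. The sign argument via \eqref{10b}--\eqref{10c} only works if one controls the sign of $w_2$ at a putative first zero of $w_1$, and a priori $w_2$ could be negative there; closing this requires either the Sturm-type dictionary between $\{w_1=0\}$ and $\{v_0=0\}$ combined with the ``last crossing'' choice of $b_\ast$, or a monotonicity/convexity argument using the conserved quantities $I_1,\dots,I_4$ to constrain the joint trajectory of $(w_1,w_2)$. I expect the cleanest writeup routes entirely through the $(u_0,v_0)$ subsystem and Sturm comparison, invoking the definition of $b_\ast$ as the last vertical-axis crossing, rather than through the nonlinear $(w_1,w_2)$ equations directly.
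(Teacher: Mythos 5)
Part (ii) of your proposal coincides with the paper's argument: once $w_1>0$ on $(0,T_*)$ with $w_1(T_*)=0$, the zero has non-positive slope, \eqref{10b} gives $w_2(T_*)\ge 0$, and $w_2(T_*)=0$ is excluded because $w_1(T_*)=w_2(T_*)=0$ would place the solution on the sine cone by Corollary \ref{246}. The genuine gap is in part (i), i.e.\ excluding a zero of $w_1$ in the open interval $(0,T_*)$ --- which you yourself flag as the main obstacle --- and neither of your two routes closes it. The first-zero argument via \eqref{10b}--\eqref{10c} only yields $w_2(t_1)\ge 0$ at a putative first zero $t_1$, and no contradiction follows from the sign of $y_2$; ``$w_1$ re-enters positivity only after the maximal volume orbit'' is an assertion, not an argument. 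The ``cleaner route'' is not available either: the correspondence between zeros of $w_1$ and of $v_0$ used by Foscolo--Haskins concerns values at the maximal volume orbit as the parameter $b$ varies, not a time-wise dictionary on $(0,T_*)$; and, more seriously, the Sturm comparison with the Legendre problem cannot deliver ``exactly one zero'' --- as recalled after Theorem \ref{completeS3S3}, Foscolo--Haskins obtain only the existence of a crossing from that comparison, not uniqueness. So the claim that $v_0$ (hence $w_1$) has no interior zero does not follow from the ingredients you invoke.

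The paper proves (i) by a different and much softer mechanism: continuity in the family parameter $b$ rather than ODE analysis in $t$. The homogeneous solution $b=1$ satisfies $w_1(t)>0$ for all $t\in\bigl(0,\tfrac{\pi\sqrt{3}}{6}\bigr]$, and $b_*$ is by definition the last parameter before $1$ at which the endpoint value $w_1^b(T_b)$ vanishes; since $w_1^b>0$ near $t=0$ by the smooth-extension asymptotics, a zero of $t\mapsto w_1^b(t)$ could only be created, as $b$ decreases from $1$ to $b_*$, either through the endpoint $T_b$ (excluded by the choice of $b_*$) or as an interior tangential zero, which by \eqref{10b} would force $w_2$ to vanish there and hence put the solution on the sine cone by Corollary \ref{246}, a contradiction. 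This gives $w_1^{b_*}>0$ on $(0,T_*)$ directly, after which your step (ii) applies verbatim. If you revise, replace your part (i) with this deformation-in-$b$ argument anchored at the homogeneous solution; the pointwise sign analysis of the $(w_1,w_2)$ system is only needed at the single time $T_*$.
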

\begin{proof}
The solution $\Psi_{b_*}$ corresponds to the last time the family $\beta(b)$ crosses the axis $w_1=0$ before the homogeneous solution \eqref{homogeneousS3S3}. Since the homogeneous solution satisfies $w_1(t)> 0$ for $t\in (0,\frac{\pi\sqrt{3}}{6}]$, it follows that $w^{b_*}_1(t)>0$ for $t\in (0,T_*)$. 

Now, this implies that $w_1(T_*)$ is a zero with a non-positive slope. Thus, Eq. \eqref{10b} reduces to 
$$\partial_t w_1 \eval{{T_*}} = -3 \frac{\mu}{\lambda^2} w_2 \leq 0 \;,$$
which implies $w_2(T_*)\geq 0$. If it were zero, we would have $w_1(T_*)=w_2(T_*)=0$, and we would be on the sine cone by Lemma \ref{246}, so $w_2(T_*)>0$, as needed.
 \end{proof}
\section{The Hitchin functional in the cohomogeneity one setting}
Recall the Hitchin functional introduced in \cite{ESF24h}:
\begin{align*}
    \mathcal{Q}: \mathcal{U} & \rightarrow \R \\
     \omega & \mapsto \frac{1}{3}\int_M \vol_{d\omega} - 4 \int_M\vol_\omega \;, 
\end{align*}
with $\mathcal{U}= \{ \omega\in \Omega^2 \st d\omega \mathrm{~stable,~}\omega \mathrm{~stable~and~ positive,~} \omega^2 \mathrm{~exact}\}$. It is instructive to investigate how the set $\mathcal{U}$ and the Hitchin functional $\mathcal{Q}$ restrict to the cohomogeneity one case. Consider $\omega= \lambda \eta^{se}\wedge dt + \underline{u} \omega^{se}$ a stable cohomogeneity one 2-form. The stability of $\omega$ corresponds to $\lambda \abs{u}^2 \neq 0 $, and one obtains similar open conditions for the stability of $d\omega$ and the positivity of $\omega$ with respect to the induced almost complex structure. Finally, the condition for $\omega^2$ to be exact can be shown to be equivalent to $d\omega^2 = 0$ in this instance.  The closedness condition corresponds to the evolution equation 
\begin{equation}
        \partial_t \abs{\underline{u}}^2 =  4 \lambda u_1\;, \label{edomega20}
\end{equation} 
 since $\omega^2 = 2 \abs{\underline{u}}^2 \vol^{se}_h + 2 \lambda \underline{u} \eta^{se} \wedge dt \wedge \omega^{se}$, and the claim follows by differentiation.
\begin{proposition}
Let $\omega = \lambda(t) \eta^{se}\wedge dt  +  \underline{u}(t) \omega^{se}$ be cohomogeneity one 2-form satisfying the evolution equation \eqref{edomega20}. The functional $\mathcal{Q}$ restricted to cohomogeneity one forms becomes 
$$\mathcal{Q}^{(1)}(\lambda, \underline{u})= C\int_I 4 \lambda^3 + \lambda \abs{\partial_t \underline{u}}^2 - 4 \lambda^2 \partial_t u_1 + \frac{9}{\lambda} (u_2^2 +u_3^2) - 12 \lambda \abs{\underline{u}}^2 dt\;,$$
for $C\in \R$ a constant.
\end{proposition}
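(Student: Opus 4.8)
The plan is to evaluate the two summands of $\mathcal Q$ on the cohomogeneity one ansatz $\omega=\lambda\,\eta^{se}\wedge dt+\underline u\cdot\omega^{se}$, to reduce each to an integral over $I$ by invariance, and then to bring the result into the stated shape using the closedness condition \eqref{edomega20}. Since $\omega$, and hence $d\omega$, $\vol_\omega$ and $\vol_{d\omega}$, are $\SU(2)\times\SU(2)$-invariant, both volume forms equal a function of $t$ times the fixed invariant $6$-form $\eta^{se}\wedge dt\wedge\vol^{se}_h$; writing $\vol_\omega=f_\omega(t)\,\eta^{se}\wedge dt\wedge\vol^{se}_h$ and $\vol_{d\omega}=f_{d\omega}(t)\,\eta^{se}\wedge dt\wedge\vol^{se}_h$ and integrating out the $N_{1,1}$-factor turns $\mathcal Q$ into $C\int_I\bigl(f_{d\omega}-12 f_\omega\bigr)\,dt$ after absorbing numerical constants into $C$. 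It then remains to identify the two densities.

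The density $f_\omega$ is elementary. In the expansion of $\omega^3$ the summands involving $(\lambda\,\eta^{se}\wedge dt)^2$ or $(\underline u\cdot\omega^{se})^3$ vanish for degree reasons, leaving $\omega^3=3\,(\lambda\,\eta^{se}\wedge dt)\wedge(\underline u\cdot\omega^{se})^2$, and the $\Lambda^2$-relations $\omega^{se}_i\wedge\omega^{se}_j=\pm 2\delta_{ij}\vol^{se}_h$ of the underlying Sasaki-Einstein $\SU(2)$-structure (the sign on the index $0$ reflecting $\omega^{se}_0\in\Omega^2_-$) give $(\underline u\cdot\omega^{se})^2=2\abs{\underline u}^2\vol^{se}_h$. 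Hence $f_\omega$ is a constant multiple of $\lambda\abs{\underline u}^2$, responsible for the term $-12\lambda\abs{\underline u}^2$.

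The substantial part is $f_{d\omega}$. Using the structure equations $d\eta^{se}=-2\omega^{se}_1$, $d\omega^{se}_0=d\omega^{se}_1=0$, $d\omega^{se}_2=3\,\eta^{se}\wedge\omega^{se}_3$, $d\omega^{se}_3=-3\,\eta^{se}\wedge\omega^{se}_2$, one obtains
\[
d\omega=P\wedge dt+\eta^{se}\wedge\theta,\qquad \theta=3\bigl(u_2\,\omega^{se}_3-u_3\,\omega^{se}_2\bigr),
\]
where $P$ is the horizontal $2$-form with $\omega^{se}_1$-component $\partial_t u_1-2\lambda$ and $\omega^{se}_j$-component $\partial_t u_j$ for $j\in\{0,2,3\}$. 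I would then apply Hitchin's volume functional for stable $3$-forms to $d\omega$. Because of the special shape $P\wedge dt+\eta^{se}\wedge\theta$, the endomorphism $K_{d\omega}(v)=(v\lrcorner d\omega)\wedge d\omega$ preserves the splitting of $TM$ into the $4$-dimensional horizontal distribution and the $2$-plane spanned by $\partial_t$ and the Reeb field; hence the quartic $\lambda(d\omega)=\tfrac16\operatorname{tr}(K_{d\omega}^2)$ decouples into a horizontal and a $2$-dimensional contribution, each expressible through the pairings $P\wedge P$, $\theta\wedge\theta$ and $P\wedge\theta$ as multiples of $\vol^{se}_h$. Equivalently one may determine the companion $3$-form $\widehat{d\omega}$ by decomposing $d\omega+i\,\widehat{d\omega}$ into a decomposable complex $(3,0)$-form, $\vol_{d\omega}$ then being a fixed multiple of $d\omega\wedge\widehat{d\omega}$. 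Either route reduces the matter to the $\Lambda^2$-algebra of the horizontal space, carried out carefully in view of the sign of $\omega^{se}_0\in\Omega^2_-$ against $\omega^{se}_1,\omega^{se}_2,\omega^{se}_3\in\Omega^2_+$, and yields $f_{d\omega}$ explicitly.

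Finally, forming $f_{d\omega}-12 f_\omega$ and simplifying with \eqref{edomega20}, $\partial_t\abs{\underline u}^2=4\lambda u_1$ --- using it to eliminate terms in favour of the ones in the statement, and integrating the resulting $\partial_t u_1$-term by parts where needed (the boundary contributions being harmless at the ends of $I$, which are either singular orbits across which $\omega$ extends smoothly or points where $\lambda$ or $\underline u$ degenerates) --- produces the remaining terms $4\lambda^3$, $\lambda\abs{\partial_t\underline u}^2$, $-4\lambda^2\partial_t u_1$ and $\tfrac{9}{\lambda}(u_2^2+u_3^2)$, hence the stated formula with $C$ the leftover constant. The main obstacle is the evaluation of $\vol_{d\omega}$: unlike $\vol_\omega$ it is not a priori a polynomial in $(\lambda,\underline u,\partial_t\underline u)$, being built from the square root of the quartic invariant of a generic --- not necessarily nearly Kähler --- stable $3$-form, and the real work lies in showing, via the (indefinite) $\Lambda^2$-relations on $N_{1,1}$ together with \eqref{edomega20}, that this square root collapses to the rational density appearing in $\mathcal Q^{(1)}$.
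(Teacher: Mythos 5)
Your setup is sound and agrees with the paper up to the point where the real content begins: the reduction by invariance to densities $f_\omega, f_{d\omega}$, the identification $f_\omega\propto\lambda\abs{\underline{u}}^2$ from $\omega^3$, the formula $d\omega = dt\wedge P+\eta^{se}\wedge\theta$, and the observation that $K_{d\omega}$ preserves the splitting into the horizontal $4$-plane and $\operatorname{span}(\partial_t,\mathrm{Reeb})$ are all correct. But the proposition \emph{is} the explicit formula for the integrand — the coefficients $4\lambda^3$, $\lambda\abs{\partial_t\underline{u}}^2$, $-4\lambda^2\partial_t u_1$, $\tfrac{9}{\lambda}(u_2^2+u_3^2)$ — and this is exactly the step you do not carry out. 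You describe two routes (the quartic invariant $\operatorname{tr}(K_{d\omega}^2)$, or constructing the companion form so that $d\omega+i\,\widehat{d\omega}$ is decomposable) and then assert that either "yields $f_{d\omega}$ explicitly", while your closing paragraph concedes that "the real work lies in showing\ldots that this square root collapses to the rational density". That collapse is not a routine afterthought: along your route one must actually verify that the quartic invariant of $dt\wedge P+\eta^{se}\wedge\theta$ is a perfect square of the claimed rational expression, and nothing in the proposal establishes this. As it stands the argument is a plan, with the decisive computation left open, so it does not prove the stated formula.

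The paper avoids the square-root issue entirely by quoting the companion paper: by [ESF24h, Prop.~3.20] the form $\omega\in\mathcal{U}$ carries an associated $\SU(3)$-structure for which $\widehat{d\omega}=*d\omega$, and $*d\omega$ is then computed directly in the invariant coframe, giving
\begin{equation*}
\widehat{d\omega} = 2 \lambda^2 \eta^{se} \wedge \omega^{se}_1 + \left( \partial_t u_0 \right) \lambda \eta^{se} \wedge \omega_0^{se} -  \sum_{i=1}^3 \left( \partial_t u_i \right) \lambda \eta^{se} \wedge \omega_i^{se} +3 \frac{u_2}{\lambda} dt \wedge \omega^{se}_3 - 3\frac{u_3}{\lambda} dt \wedge \omega^{se}_2\;,
\end{equation*}
after which $\vol_{d\omega}\propto d\omega\wedge\widehat{d\omega}$ is a purely algebraic wedge computation producing the density $2\bigl(4\lambda^3+\lambda\abs{\partial_t\underline{u}}^2-4\lambda^2\partial_t u_1+\tfrac{9}{\lambda}(u_2^2+u_3^2)\bigr)$ with no square roots to tame. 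If you want to complete your version, either carry out the quartic-invariant computation in the $\Lambda^2_\pm$ basis to the end, or use the associated $\SU(3)$-structure as the paper does. A further small point: no appeal to \eqref{edomega20} or integration by parts is needed (or wanted) at the final stage — the term $-4\lambda^2\partial_t u_1$ appears verbatim in $\vol_{d\omega}$, and integrating it by parts would change the integrand away from the stated one; \eqref{edomega20} enters only to guarantee $\omega\in\mathcal{U}$ (exactness of $\omega^2$), not in the algebra.
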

\begin{proof} As above, let $\omega= \lambda \eta^{se} \wedge dt + \underline{u}\omega^{se}$. Then 
$$d\omega =  -2 \lambda dt\wedge \omega^{se}_1 + \left( \partial_t \underline{u}\right) dt \wedge \omega^{se} +3 u_2 \eta^{se}\wedge \omega^{se}_3 - 3u_3 \eta^{se}\wedge \omega^{se}_2\;.$$
By \cite[Prop. 3.20]{ESF24h}, $\omega\in \mathcal{U}$ carries a natural associated $\SU(3)$-structure, and $\widehat{d\omega} = *d\omega$ will be given by 
$$\widehat{d\omega} = 2 \lambda^2 \eta^{se} \wedge \omega^{se}_1 + \left( \partial_t u_0 \right) \lambda \eta^{se} \wedge \omega_0^{se} -  \sum_{i=1}^3 \left( \partial_t u_i \right) \lambda \eta^{se} \wedge \omega_i^{se} +3 \frac{u_2}{\lambda} dt \wedge \omega^{se}_3 - 3\frac{u_3}{\lambda} dt \wedge \omega^{se}_2\;.$$
Thus, we have 
$$
    \vol_{d\omega}= 2\left( 4\lambda^3 + \lambda \abs{\partial_t \underline{u}}^2  - 4\lambda^2 \partial u_1  + \frac{9}{\lambda}\left( u_2^2+u_3^2\right) \right) \eta^{se} \wedge dt\wedge \vol_h^{se}\;.$$
Using that $\vol_{\omega}$ is proportional to $\lambda\abs{\underline{u}}^2$, the claim follows from the definition of $\mathcal{Q}$ and integration along the Sasaki-Einstein fibres.
\end{proof}
Now, it is convenient to introduce a change of basis. Recall that the Reeb field induces a rotation in the span $\langle \omega_2^{se}, \omega_3^{se}\rangle$ (cf. Remark \ref{remarkreeb}). Thus, we find it suitable to introduce the new basis $(\underline{\hat{u}}, \theta)= (\hat{u}_0, \hat{u}_1, \hat{u}_2, \theta)$, related to $\underline{u}$ by
\begin{equation}\label{changeofvariables}
   \hat{u}_0= u_0 ~~~~~~ \hat{u}_1= u_1 ~~~~~~ \hat{u}_2= u_2 \cos(\theta)  ~~~~~~ \hat{u}_3= \hat{u}_2 \sin(\theta)\;.
\end{equation}
Since there is no risk of confusion, we abuse notation and set $\underline{u}=\underline{\hat{u}}$ from now on. Under this change of variables and rescaling, the functional $\mathcal{Q}^{(1)}$ becomes
\begin{equation}\label{funnctionalQ1}
    \mathcal{Q}^{(1)}(\lambda, \underline{u} , \theta)= \int_I 4 \lambda^3 + \lambda \abs{\partial_t \underline{u}}^2  + \lambda (u_2\partial_t \theta)^2- 4 \lambda^2 \partial_t u_1 + \frac{9}{\lambda} u^2_2 - 12 \lambda \abs{u}^2 dt\;,
\end{equation}
with $\underline{u}= (u_0,u_1,u_2) \in \mathcal{C}^\infty(I, \R^{1,2})$.
\begin{proposition}
    The Euler-Lagrange equations for $\mathcal{Q}^{(1)}$ are 
    \begin{subequations} \label{eulerlagrangecohomone}
        \begin{align}
            \frac{\delta \mathcal{Q}^{(1)}}{\delta \lambda} \implies ~~& 12\lambda^2 + \abs{\partial_t \underline{u}}^2-8\lambda \partial_t u<_1 - \frac{9}{\lambda^2} u^2_2 -12 \abs{\underline{u}}^2=0 \label{ellambda}\\
            \frac{\delta \mathcal{Q}^{(1)}}{\delta u_0} \implies ~~ &\partial_t(\lambda \partial_t u_0) +12 \lambda u_0 = 0 \label{elu0}\\
            \frac{\delta \mathcal{Q}^{(1)}}{\delta u_1} \implies ~~&\partial_t(\lambda \partial_t u_1) +12 \lambda u_1 - 4 \lambda \partial_t \lambda = 0 \label{elu1}\\
            \frac{\delta \mathcal{Q}^{(1)}}{\delta u_2} \implies ~~ &\partial_t(\lambda \partial_t u_2) +12 \lambda u_2 - \frac{9}{\lambda}u_2 - \lambda (\partial_t \theta)^2 u_2= 0 \label{elu2}\\
            \frac{\delta \mathcal{Q}^{(1)}}{\delta \theta} \implies ~~ & \partial_t \left( \lambda u^2_2 \partial_t \theta \right)=0 \label{eltheta}
        \end{align}
\end{subequations}
\end{proposition}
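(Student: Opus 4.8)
The statement is a direct exercise in one-dimensional calculus of variations, so the plan is to compute the first variation of $\mathcal{Q}^{(1)}$ separately along each of the five directions $\lambda,u_0,u_1,u_2,\theta$, regarding it as a functional of $(\lambda,\underline{u},\theta)\in C^\infty(I,\R\times\R^{1,2}\times\R)$ (the stability and positivity conditions are open, hence preserved under small perturbations), and then to impose that each variation vanish against all interior compactly supported test functions.

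First I would treat the algebraic variable $\lambda$. Since $\partial_t\lambda$ does not occur in the integrand of \eqref{funnctionalQ1}, no integration by parts is needed, and the Euler-Lagrange equation \eqref{ellambda} in this direction is obtained simply by differentiating the integrand pointwise with respect to $\lambda$.

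Next come the dynamical variables. Varying $u_0\mapsto u_0+s\,\delta u_0$ with $\delta u_0$ compactly supported in the interior of $I$, only the terms $\lambda\abs{\partial_t\underline{u}}^2$ and $-12\lambda\abs{\underline{u}}^2$ depend on $u_0$; integrating the derivative term by parts converts $2\lambda\,\partial_t u_0\,\partial_t\delta u_0$ into $-2\,\partial_t(\lambda\,\partial_t u_0)\,\delta u_0$, and collecting the remaining pointwise term gives \eqref{elu0}. The one bookkeeping point is that, for the Lorentzian inner product on $\R^{1,2}$, the $u_0$-terms of $\abs{\partial_t\underline{u}}^2$ and of $\abs{\underline{u}}^2$ carry the same (negative) sign, so the overall minus factors out of the $u_0$-part of the integrand and drops out of the Euler-Lagrange equation, which is then exactly \eqref{elu0}. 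The variable $u_2$ is the same computation together with the additional contributions of $\lambda(u_2\partial_t\theta)^2$ and $\tfrac{9}{\lambda}u_2^2$, which after differentiation supply the terms $-\lambda(\partial_t\theta)^2 u_2$ and $-\tfrac{9}{\lambda}u_2$ in \eqref{elu2}. For $u_1$ there is, besides the $u_0$-type part, the linear term $-4\lambda^2\partial_t u_1$: one integration by parts replaces it by $\partial_t(4\lambda^2)\,\delta u_1 = 8\lambda\,\partial_t\lambda\,\delta u_1$, which is precisely the origin of the inhomogeneous term $-4\lambda\,\partial_t\lambda$ in \eqref{elu1}. Finally, only $\lambda(u_2\partial_t\theta)^2$ depends on $\theta$, and a single integration by parts yields \eqref{eltheta}; equivalently $\lambda u_2^2\,\partial_t\theta$ is a first integral, which is the ODE counterpart of the $S^1$-symmetry rotating $(\omega^{se}_2,\omega^{se}_3)$ recorded in Remark \ref{remarkreeb}.

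There is no serious obstacle: the computation is elementary, and the only things to watch are the sign conventions forced by the signature $(1,2)$ in the terms built from $\abs{\underline{u}}^2$ and $\abs{\partial_t\underline{u}}^2$, and the asymmetric role of the term linear in $\partial_t u_1$, which feeds into \eqref{ellambda} with no integration by parts but into \eqref{elu1} with one. The boundary terms produced by the integrations by parts may be discarded, since characterising critical points only requires testing against interior compactly supported variations, and the geometrically relevant solutions automatically satisfy the smooth-extension conditions of Lemma \ref{smoothextconditions} at the endpoints of $I$.
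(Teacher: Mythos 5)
Your proposal is correct and is precisely the routine first-variation computation that the paper leaves implicit (the proposition is stated there without proof): no integration by parts for $\lambda$, one integration by parts for each of $u_0,u_1,u_2,\theta$, with the Lorentzian signs in $\abs{\underline{u}}^2$ and $\abs{\partial_t\underline{u}}^2$ handled as you describe and the term linear in $\partial_t u_1$ producing the inhomogeneous $-4\lambda\,\partial_t\lambda$. One small caveat: pointwise differentiation of the integrand in $\lambda$ actually also yields the extra term $(u_2\partial_t\theta)^2$, which is absent from \eqref{ellambda} as printed and only drops out once \eqref{eltheta} together with the boundary conditions forces $\partial_t\theta=0$; it would be more accurate to record this than to assert that the $\lambda$-derivative is literally \eqref{ellambda}.
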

By the Principle of Symmetric Criticality of Palais \cite{Palais79}, solutions to these Euler Lagrange equations correspond to cohomogeneity one nearly Kähler solutions on $S^2\times S^3 \times I$. In particular, together with Equation \eqref{edomega20}, they should be equivalent to the system of Foscolo and Haskins \cite{FH17} above. We show this to be the case. First, we have 
\begin{lemma}
    Equations \eqref{elu0}-\eqref{eltheta} are equivalent to the system \eqref{5a}-\eqref{5f} of Foscolo--Haskins.
\end{lemma}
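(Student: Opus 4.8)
The plan is to show that the second-order Euler--Lagrange system \eqref{elu0}--\eqref{eltheta}, after the change of variables \eqref{changeofvariables}, can be reduced to the first-order Foscolo--Haskins system \eqref{5a}--\eqref{5f} by introducing the auxiliary variables $\underline v$ and reading off the correct evolution for $\lambda$. Concretely, I would first dispose of the $\theta$-equation: equation \eqref{eltheta} says $\lambda u_2^2 \partial_t\theta$ is constant, and since on the geometric locus (e.g.\ at the singular orbit) this constant vanishes --- or by the gauge freedom of the Reeb rotation of Remark \ref{remarkreeb} one normalises $\theta$ to be constant --- the term $\lambda(\partial_t\theta)^2 u_2$ drops out of \eqref{elu2}, which then becomes $\partial_t(\lambda\partial_t u_2)+12\lambda u_2-\tfrac{9}{\lambda}u_2=0$. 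After this the three surviving equations \eqref{elu0}, \eqref{elu1}, \eqref{elu2} are genuine second-order ODEs for $u_0,u_1,u_2$ (with $\lambda$ coupled in through \eqref{elu1}).

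Next I would define $v_i$ by the relations that appear in \eqref{5a}--\eqref{5c}, namely
\begin{equation*}
 v_0 = -\tfrac{\lambda}{3}\partial_t u_0\;,\qquad
 v_1 = \tfrac{\lambda}{3}\bigl(2\lambda^2 - \lambda\partial_t u_1\bigr)\;,\qquad
 v_2 = -\tfrac{\lambda}{3}\partial_t u_2\;,
\end{equation*}
so that \eqref{5a}, \eqref{5b}, \eqref{5c} hold by construction. Then I would differentiate each $v_i$ in $t$ and substitute the Euler--Lagrange equations: differentiating $v_0$ and using \eqref{elu0} should give $\partial_t v_0 = 4\lambda u_0$, i.e.\ \eqref{5d}; differentiating $v_1$ and using \eqref{elu1} should give $\partial_t v_1 = 4\lambda u_1$, i.e.\ \eqref{5e} (here the $4\lambda\partial_t\lambda$ term in \eqref{elu1} is exactly what is needed to cancel the $\partial_t(\lambda^3)$-type contribution from differentiating the $2\lambda^3/3$ piece of $v_1$); differentiating $v_2$ and using the reduced \eqref{elu2} should give $\partial_t v_2 = 4\lambda u_2 - \tfrac{9}{\lambda^2}u_2$. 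Comparing with \eqref{5f}, which reads $\partial_t v_2 - 4\lambda u_2 = -3u_2/\lambda$, I would need the identification $u_2 = -\lambda\mu = -\lambda|\underline u|$ coming from the structure matrix $A$ in Proposition \ref{241}; with $u_2^2 = \lambda^2|\underline u|^2$ one has $\tfrac{9}{\lambda^2}u_2 = \tfrac{9}{\lambda^2}\cdot\lambda^2\mu^2/u_2 \cdot$\,\dots, so I must be careful and instead use directly that $-9u_2/\lambda^2 \cdot(\text{something})$ matches $-3u_2/\lambda$; the cleanest route is to note $\lambda u_2^2\partial_t\theta=\mathrm{const}$ is the $I$-type conserved quantity and that on the nearly Kähler locus $u_2<0$, $\mu^2=|\underline u|^2$, so that $-9u_2/\lambda^2$ and $-3u_2/\lambda$ agree precisely when $\lambda^2$ and the normalisation of $u_2$ are linked as in \eqref{241}.

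The remaining content is the $\lambda$-equation: I would show that the constraint \eqref{ellambda} together with \eqref{elu1} is equivalent to \eqref{5g}. For this I would combine \eqref{ellambda} (which, with $v_i$ as above, expresses $|\underline v|^2$ and $u_2^2$ in terms of $\lambda^2$, $|\underline u|^2$ and $\partial_t u_1$) with the evolution of $|\underline u|^2$ --- here equation \eqref{edomega20} $\partial_t|\underline u|^2 = 4\lambda u_1$ enters --- and differentiate once more, using \eqref{elu1} to replace $\partial_t(\lambda\partial_t u_1)$. After rearranging, the identity $\lambda|\underline u|^2\partial_t\lambda^2 - \partial_t u_2^2 = -\lambda^4 u_1$ of \eqref{5g} should fall out. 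The main obstacle I anticipate is precisely bookkeeping the factors of $\lambda$ and the sign/normalisation in the $v_2$ equation, i.e.\ making the passage from the Lagrangian normalisation (where $u_2$ is a free variable) to the Foscolo--Haskins normalisation (where $u_2=-\lambda\mu$ is forced) rigorous; one should probably phrase the lemma as an equivalence of the two systems \emph{on the open set where $\lambda,|\underline u|>0$ and $u_2<0$}, and check that \eqref{edomega20} is implied by the EL-equations (differentiate $v_1$-relation, or use \eqref{5e}) so that the extra closedness condition is not an independent assumption. Modulo that, every step is a direct substitution, and I would present it as: (i) eliminate $\theta$; (ii) define $\underline v$, verify \eqref{5a}--\eqref{5c} tautologically; (iii) differentiate and feed in \eqref{elu0}--\eqref{elu2} to get \eqref{5d}--\eqref{5f}; (iv) combine \eqref{ellambda} with \eqref{edomega20} to get \eqref{5g}; (v) note the argument is reversible.
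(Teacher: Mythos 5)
Your overall strategy coincides with the paper's: eliminate $\theta$ via \eqref{eltheta} (the constant $\lambda u_2^2\partial_t\theta$ vanishes by the boundary conditions, so one may take $\theta$ constant), then observe that solving \eqref{5a}--\eqref{5c} for $\underline v$ and substituting into \eqref{5d}--\eqref{5f} reproduces, after one differentiation, the second-order equations \eqref{elu0}--\eqref{elu2}, and that this is reversible. That is exactly the intended argument.

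However, your $v_2$ step contains a computational slip that then sends you into a spurious detour. With $v_2=-\tfrac{\lambda}{3}\partial_t u_2$ and the reduced \eqref{elu2},
\begin{equation*}
\partial_t v_2=-\tfrac{1}{3}\,\partial_t\bigl(\lambda\partial_t u_2\bigr)=-\tfrac{1}{3}\Bigl(-12\lambda u_2+\tfrac{9}{\lambda}u_2\Bigr)=4\lambda u_2-\tfrac{3}{\lambda}u_2\;,
\end{equation*}
which is \eqref{5f} on the nose. You instead wrote $\partial_t v_2=4\lambda u_2-\tfrac{9}{\lambda^2}u_2$ (the $\tfrac{9}{\lambda}u_2$ term was not divided by $3$) and then attempted to reconcile $-\tfrac{9}{\lambda^2}u_2$ with $-\tfrac{3}{\lambda}u_2$ by appealing to the normalisation $u_2=-\lambda\mu$ of Proposition \ref{241}. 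No such identification is needed: the equivalence asserted in the lemma is purely formal and holds wherever $\lambda\neq 0$, and the proposed reconciliation is not correct as stated (the two expressions agree only when $\lambda=3$). Once the arithmetic is fixed, the alleged obstacle disappears and your steps (i)--(iii) and (v) constitute the paper's proof. Finally, your step (iv) (recovering \eqref{5g} from \eqref{ellambda} and \eqref{edomega20}) is outside the scope of this lemma; the paper handles that separately in the following proposition, via the conserved quantities $I_i$, so you need not (and should not) fold it into this argument.
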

\begin{proof}
First, Equation \eqref{eltheta} directly implies $\lambda u^2_2 \partial_t \theta=C$ for some $C\in \R$, but boundary conditions force $C=0$, from which it follows that $\partial_t \theta =0$. So, we can choose $\theta(t)=0$ as in \cite{FH17}. The converse is immediate. Differentiation by $t$ of $\lambda \partial_t u_i$ implies that Equations \eqref{5a}-\eqref{5f} are equivalent to Equations \eqref{elu0}-\eqref{elu2}.
\end{proof}
Thus, we are left with showing that Equation \eqref{ellambda} and the condition $\partial_t \abs{\underline{u}}^2= 4\lambda u_1$ are equivalent to \eqref{5g}. Instead, we will show that the Euler-Lagrange equations are equivalent to the conservation of the quantities $I_i(t)$ from \eqref{conservedquantities}. We have the following:
\begin{proposition}
    Under Equations \eqref{elu0}- \eqref{elu2} and $\partial_t \abs{\underline{u}}^2= 4\lambda u_1$, the conditions $I_1(t)=0$ and $I_4(t)=0$ are automatically satisfied. Moreover, Equation \eqref{ellambda} is equivalent to the conditions $I_2(t)=0$. The remaining condition $I_3(t)=0$ follows from $I_2=0=I_4$. 
\end{proposition}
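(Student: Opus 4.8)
The plan is to translate the second-order Euler--Lagrange system into the first-order Foscolo--Haskins variables and then check the assertions as identities. Since \eqref{eltheta} and the boundary behaviour already force $\theta\equiv 0$, I set
$$ v_0 := -\tfrac{\lambda}{3}\partial_t u_0,\qquad v_1 := \tfrac{1}{3}\bigl(2\lambda^2-\lambda\,\partial_t u_1\bigr),\qquad v_2 := -\tfrac{\lambda}{3}\partial_t u_2, $$
so that \eqref{5a}--\eqref{5c} hold by construction and, by the differentiation performed in the preceding lemma, \eqref{elu0}--\eqref{elu2} become exactly \eqref{5d}--\eqref{5f}. Under the hypotheses all of \eqref{5a}--\eqref{5f} are therefore available, together with the closedness constraint $\partial_t|\underline u|^2 = 4\lambda u_1$ (equivalently $\langle\underline u,\partial_t\underline u\rangle = 2\lambda u_1$); every remaining claim is then a computation in $\lambda$, $\underline u$, $\partial_t\underline u$, in which \eqref{edomega20} is invoked whenever a $t$-derivative of $|\underline u|^2$ appears -- it being an extra condition built into $\mathcal U$ rather than a consequence of \eqref{elu0}--\eqref{elu2}.

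First I would dispatch $I_1$ and $I_4$. Substituting the formulas for $v_i$ into $I_1=\langle\underline u,\underline v\rangle$, the terms carrying $\partial_t u_i$ recombine, up to the common factor $-\lambda/3$, into $\tfrac12\partial_t|\underline u|^2$, while the inhomogeneous piece of $v_1$ contributes $\tfrac23\lambda^2 u_1$; thus $I_1=-\tfrac{\lambda}{6}\partial_t|\underline u|^2+\tfrac23\lambda^2 u_1$, which vanishes identically by \eqref{edomega20}. For $I_4=v_1-|\underline u|^2$ one computes $\partial_t I_4=\partial_t v_1-\partial_t|\underline u|^2=4\lambda u_1-4\lambda u_1=0$ using \eqref{5e} and \eqref{edomega20}, so $I_4$ is conserved; to see that it vanishes identically one evaluates at the singular orbit, where the smooth-extension conditions of Lemma \ref{smoothextconditions} give, in both cases, $v_1(0)=0$ and $|\underline u|^2(0)=0$. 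Geometrically this is the statement that $I_4=0$ is the second nearly hypo equation of \eqref{NHeq}; cf. the remark after Prop. \ref{241}.

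For the last two assertions I would expand \eqref{ellambda} through the first-order formulas: writing $\lambda|\partial_t\underline u|^2=9|\underline v|^2/\lambda+4\lambda^3-12\lambda v_1$ and $-4\lambda^2\partial_t u_1=-8\lambda^3+12\lambda v_1$, the cubic and the $v_1$ terms cancel and \eqref{ellambda} collapses to $\tfrac{9}{\lambda^2}\bigl(|\underline v|^2-u_2^2\bigr)+12\bigl(v_1-|\underline u|^2\bigr)=0$, i.e. to $3(I_2-I_3)+4\lambda^2 I_4=0$. Since $I_4\equiv 0$ is already established, this reduces \eqref{ellambda} to the relation between $I_2$ and $I_3$; feeding it into the $t$-derivative of $I_2-I_3$ (which, by $I_1\equiv 0$ and \eqref{5d}--\eqref{5f}, is proportional to $I_1$ and hence vanishes) and using that $I_2$ and $I_3$ already vanish at the singular orbit pins $I_2\equiv 0$. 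The final assertion is then immediate: once \eqref{ellambda} holds and $I_2\equiv I_4\equiv 0$, the identity $I_3=I_2+\tfrac{4}{3}\lambda^2 I_4$ obtained above gives $I_3\equiv 0$. The real difficulty throughout is bookkeeping -- keeping the indefinite $\R^{1,2}$ signs consistent between \eqref{funnctionalQ1} and the conventions of \eqref{NKsystem}--\eqref{conservedquantities}, pinning down precisely which conserved combination of the $I_j$ is proportional to \eqref{ellambda}, and being careful that the \emph{identical} vanishing of $I_1$ and $I_4$ (not merely their conservation) is genuinely forced by the structure of $\mathcal U$ and by smoothness across the singular orbit.
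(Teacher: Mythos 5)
Your handling of $I_1$, $I_4$ and of the last assertion is correct and essentially the paper's own argument: you substitute $3\underline{v}=2\lambda^2e_1-\lambda\,\partial_t\underline{u}$, kill $I_1$ with \eqref{edomega20}, differentiate $I_4$ using \eqref{5e} and \eqref{edomega20}, and then fix the constant at the singular orbit (your evaluation there is a concrete version of the paper's ``boundary conditions''; note that at the $\SU(2)^2/\U(1){\times}\SU(2)$ orbit the vanishing of $\abs{\underline{u}}^2$ uses $u_0(0)=u_1(0)$ together with the indefinite $\R^{1,2}$ signature, which you correctly flag). The closing step, $I_3=I_2+\tfrac43\lambda^2I_4$ once \eqref{ellambda} holds, also matches the paper.

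The gap is in the middle claim, the equivalence of \eqref{ellambda} with $I_2=0$. Your substitution uses only \eqref{5a}--\eqref{5c}, i.e.\ the definitions of the $v_i$, and such a substitution can only ever produce the combination $I_2-I_3=\abs{\underline{v}}^2-u_2^2$: your (correct) identity reads, after multiplying by $\lambda$, that the left-hand side of \eqref{ellambda} equals $\tfrac{9}{\lambda^2}(I_2-I_3)+12I_4$, so with $I_4\equiv0$ you have reduced \eqref{ellambda} to $I_2=I_3$, which is strictly weaker than $I_2=0$. The patch you propose does not close this: $\partial_t(I_2-I_3)=8\lambda I_1=0$ only controls the difference $I_2-I_3$, which you already know vanishes under \eqref{ellambda}, and $I_2(0)=I_3(0)=0$ at the singular orbit gives $I_2$ at a single point -- you never show that $I_2$ itself (or $I_2+I_3$) is constant, so nothing pins $I_2\equiv0$; in fact, taken at face value your conservation-plus-boundary step would give $I_2\equiv I_3$ on a half independently of \eqref{ellambda}, so it cannot be the mechanism behind the asserted pointwise equivalence, which in the paper needs no boundary input at all. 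What is missing is the elimination of $\abs{\partial_t\underline{u}}^2$ using the second-order equations \eqref{elu0}--\eqref{elu2} together with \eqref{edomega20}, i.e.\ Lemma \ref{auxcomputatuinpartialu}: this turns the left-hand side of \eqref{ellambda} into $\tfrac{18}{\lambda^2}I_2+18\,I_4$ (the paper's displayed coefficients contain typos, but this is the structure), or, equivalently, it supplies the relation $3(I_2+I_3)=-2\lambda^2I_4$ which, combined with your identity, separates $I_2$ from $I_3$. With $I_4\equiv0$ this gives \eqref{ellambda} $\Leftrightarrow I_2=0$ pointwise, and only then does your final step deliver $I_3\equiv0$.
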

Before we prove the proposition, we introduce the following auxiliary calculation.
\begin{lemma}\label{auxcomputatuinpartialu}
    Under Equations \eqref{elu0}- \eqref{elu2} and $\partial_t \abs{\underline{u}}^2= 4\lambda u_1$, we have
    $$ \abs{\partial_t \underline{u}}^2 = 2 \lambda \partial_t u_1 +12 \abs{\underline{u}}^2 - \frac{9}{\lambda^2}u^2_2\;.$$
\end{lemma}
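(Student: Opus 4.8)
The plan is to derive the identity by a short ``energy''/first-integral manipulation, contracting the second-order Euler--Lagrange system against $\underline{u}$ itself. The only ingredients are the constraint $\partial_t\abs{\underline{u}}^2 = 4\lambda u_1$, the three equations \eqref{elu0}--\eqref{elu2}, and — to kill the $(\partial_t\theta)^2$ term in \eqref{elu2} — the gauge $\theta\equiv 0$ provided by the preceding lemma, so that \eqref{elu2} reads $\partial_t(\lambda\,\partial_t u_2) + 12\lambda u_2 - \tfrac{9}{\lambda}u_2 = 0$.

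First I would rewrite the constraint as $\langle\underline{u},\partial_t\underline{u}\rangle = 2\lambda u_1$ (using $\partial_t\abs{\underline{u}}^2 = 2\langle\underline{u},\partial_t\underline{u}\rangle$ for the constant $\R^{1,2}$ inner product), hence $\langle\underline{u},\lambda\,\partial_t\underline{u}\rangle = 2\lambda^2 u_1$. Differentiating this in $t$ and applying the Leibniz rule on the left-hand side gives
\[ \lambda\abs{\partial_t\underline{u}}^2 + \langle\underline{u},\partial_t(\lambda\,\partial_t\underline{u})\rangle = 2\lambda^2\,\partial_t u_1 + 4\lambda u_1\,\partial_t\lambda\;. \]

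Next I would evaluate $\langle\underline{u},\partial_t(\lambda\,\partial_t\underline{u})\rangle$ by substituting the Euler--Lagrange equations componentwise: \eqref{elu0}--\eqref{elu2} give $\partial_t(\lambda\,\partial_t u_0) = -12\lambda u_0$, $\partial_t(\lambda\,\partial_t u_1) = -12\lambda u_1 + 4\lambda\,\partial_t\lambda$ and $\partial_t(\lambda\,\partial_t u_2) = -12\lambda u_2 + \tfrac{9}{\lambda}u_2$. Pairing each with the corresponding component of $\underline{u}$ — respecting that only the $u_0$-component enters with a sign opposite to the other two — the three $-12\lambda u_i$ contributions recombine into $-12\lambda\abs{\underline{u}}^2$, while the inhomogeneous terms of \eqref{elu1} and \eqref{elu2} contribute $4\lambda u_1\,\partial_t\lambda$ and $\tfrac{9}{\lambda}u_2^2$ respectively; that is, $\langle\underline{u},\partial_t(\lambda\,\partial_t\underline{u})\rangle = -12\lambda\abs{\underline{u}}^2 + 4\lambda u_1\,\partial_t\lambda + \tfrac{9}{\lambda}u_2^2$.

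Substituting this into the displayed identity, the $4\lambda u_1\,\partial_t\lambda$ terms cancel and we are left with $\lambda\abs{\partial_t\underline{u}}^2 = 2\lambda^2\,\partial_t u_1 + 12\lambda\abs{\underline{u}}^2 - \tfrac{9}{\lambda}u_2^2$; dividing by $\lambda>0$ gives the claimed formula. I do not expect a real obstacle here: the argument is a two-line computation, and the only points requiring care are keeping the $\R^{1,2}$ signature bookkeeping consistent when contracting \eqref{elu0}--\eqref{elu2} against $\underline{u}$, and confirming that the $\theta$-gauge has been fixed so that \eqref{elu2} carries no $(\partial_t\theta)^2$ term.
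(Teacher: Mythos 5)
Your proposal is correct and follows essentially the same route as the paper's proof: differentiate the constraint $\langle\underline{u},\partial_t\underline{u}\rangle=2\lambda u_1$ (you simply multiply by $\lambda$ first, avoiding the $1/\lambda$ bookkeeping) and substitute the second-order Euler--Lagrange equations for $\partial_t(\lambda\,\partial_t\underline{u})$, with the correct $\R^{1,2}$ signs and the gauge $\partial_t\theta=0$. The cancellation of the $4\lambda u_1\,\partial_t\lambda$ terms and the resulting identity match the paper's computation.
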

\begin{proof}
The proof is just a smart combination of the Leibniz rule and the aforementioned equations.
\begin{align*}
    \abs{\partial_t \underline{u}}^2 &= \partial_t \langle \underline{u}, \partial_t \underline{u} \rangle - \langle \underline{u}, \partial_t \frac{1}{\lambda} \left(\lambda \partial_t \underline{u}\right) \rangle =\partial_t( 2\lambda u_1) + (2 \lambda u_1) \frac{\partial_t \lambda}{\lambda} - \frac{1}{\lambda}\langle \underline{u}, \partial_t(\lambda \partial_t \underline{u})\rangle\\
    &= 2\lambda \partial_t u_1 + 4 u_1 \partial_t \lambda - \frac{1}{\lambda}\Big[ - 12 \lambda\abs{\underline{u}}^2 + 4\lambda u_1 \partial_t \lambda + \frac{9}{\lambda} u^2_2\Big] =2 \lambda \partial_t u_1 +12 \abs{\underline{u}}^2 - \frac{9}{\lambda^2}u^2_2\;. \qedhere
\end{align*}
\end{proof}
\begin{proof}[Proof of Proposition]
We start by showing that the conditions $I_1(t)=0=I_4(t)$ are automatically satisfied. First, we have
$$ -3I_1(t) = -3 \langle \underline{u}, \underline{v}\rangle = - \lambda u_0 \partial_t u_0 + \lambda u_1 \partial_t u_1 + 2 \lambda^2 u_1 +  \lambda u_2 \partial_t u_2 = \frac{\lambda}{2} \partial_t \abs{\underline{u}}^2 + 2 \lambda^2 u_1 = 0\;,  $$
by Equation \eqref{edomega20}. Similarly, for $I_4$, we have
$$3 I_4(t)= 3(v_1- \abs{\underline{u}}^2)=  2\lambda^2 - \lambda \partial_t u_1 -3\abs{u}^2\;.$$
Differentiating, and using Equations \eqref{elu1} and \eqref{edomega20}, we have 
$$ 3 \partial_t I_4  = 4 \lambda \partial_t \lambda - \partial_t(\lambda \partial_t u_1) -3 \partial_t \abs{\underline{u}}^2 = 12 \lambda f_1 - 3(4\lambda f_1)=0 \;. $$
Thus, $I_4(t)$ is a constant, which must again be 0 by the boundary conditions. Let us now study the relation between the Euler-Lagrange equation for $\lambda$ and $I_2$. Using the Lemma \ref{auxcomputatuinpartialu}, we have
\begin{align*}
    \frac{\delta \mathcal{Q}^{(1)}}{\delta \lambda} &= 12\lambda^2 + \abs{\partial_t \underline{u}}^2-8\lambda \partial_t u_1 - \frac{9}{\lambda^2} u^2_2 -12 \abs{u}^2= 12\lambda^2 - 6\lambda \partial_t u_1 - \frac{18}{\lambda^2} u^2_2 \\& = 6 \Big[ 2 \lambda^2 - \lambda \partial_t u_1 - \frac{3}{\lambda^2} u_2^2 \Big]= 6I_4(t) +18\lambda^2 I_2(t)\;.
\end{align*} 
Similarly,  for $I_3$ we have
$$I_3(t)= \lambda^2\abs{\underline{u}}^2 - \abs{\underline{v}}^2 = \lambda^2\abs{\underline{u}}^2 - \frac{\lambda^2}{9}\Big[ \abs{\partial_t \underline{u}}^2 -4 \lambda \partial_t u_1 +4 \lambda^2 \Big]\;,$$
and so, by a direct substitution in Equation \eqref{ellambda}, we get
\begin{align*}
    \frac{\delta \mathcal{Q}^{(1)}}{\delta \lambda} +\frac{9}{\lambda^2}I_3(t)&=\left(8\lambda^2-4\lambda \partial_t u_1 -12\abs{\underline{u}}^2\right)+\left(9\abs{\underline{u}}^2- \frac{9}{\lambda^2}u^2_2\right)\\ &=8I_4(t) + \frac{9}{\lambda^2}I_2(t)\;.
\end{align*}
Therefore, $I_3(t)$ will vanish if and only if $I_2$ (and $I_4$) vanishes, as needed.
\end{proof}
\section{The Hitchin index in the cohomogeneity one setting}
We now focus on the eigenvalue problem \eqref{1} associated with the Hitchin index. Given a nearly Kähler manifold $(M^6,\omega,\rho)$, we are interested in 2-forms $\beta \in \Omega^2_8$ satisfying 
\begin{equation}\label{PDE1}
    \Delta \beta = \nu \beta ~~~~~~~~~~~~~~~~~ d^*\beta=0\;,
\end{equation}
for $0< \nu <12$, with the limiting case $\nu=12$ corresponding to infinitesimal deformations of the nearly Kähler structure. There is a one-to-one correspondence with solutions to the 1st-order PDE system
\begin{equation}\label{PDE2}
    d\beta= \frac{\Lambda}{4}\gamma ~~~~~~~~~~~~~~~~~ d^* \gamma= \frac{\Lambda}{3}\beta \;,
\end{equation}
with $\gamma\in \Omega^3_{12}= \{\gamma \in \Omega^3 \st \omega \wedge \gamma=0= \rho  \wedge \gamma \}$ and $\Lambda= \pm \sqrt{12\nu}$.  In what follows, we will restrict ourselves to the positive branch of the square root and assume that $\Lambda>0$. Indeed, notice that if $(\beta, \gamma)$ is a solution to \eqref{PDE2} with $\Lambda$, then $(\beta, -\gamma)$ is a solution for $-\Lambda$, giving rise to the same solution of \eqref{PDE1}.

We will focus on the first order PDE system \eqref{PDE2} for $\Lambda\in (0,12)$. While finding the complete set of solutions to this system seems currently out of hand, even in the cohomogeneity one case, we can restrict ourselves to finding solutions to the PDE system with the same cohomogeneity one symmetry as the underlying nearly Kähler structure. In other words, we are computing the Hitchin index of the functional $\mathcal{Q}^{(1)}$ introduced above. 

For the remainder of the section, $(M^6, \omega, \rho)$ will denote a cohomogeneity one nearly Kähler. Let us start by characterising cohomogeneity one forms of type $8$ and $12$.
\begin{lemma}\label{cohomtype}
    Let $(M, \omega, \rho)$ be a $\SU(2)^2$-cohomogeneity one nearly Kähler structure and let $\eta(t), \omega_i(t)$ be the associated moving frame for the  underlying $SU(2)$ structure on $N_{1,1}$. Cohomogeneity one forms of type $\beta\in \Omega^2_8$ are parameterised by
    $$\beta = h_0 \omega_0 + h_1(2 \eta \wedge dt -\omega_1)\;.$$
    Similarly, cohomogeneity one forms of type $\gamma\in \Omega^3_{12}$  are parameterised by
    $$\gamma=f_0\eta\wedge \omega_0 + g_0 dt\wedge \omega_0 + f_2(\eta\wedge \omega_2 + dt\wedge \omega_3) + f_3(\eta\wedge \omega_3 - dt\wedge \omega_2)\;.$$
\end{lemma}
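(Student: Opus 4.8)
The plan is to identify the two irreducible subbundles explicitly, using the $\SU(2)$-structure decomposition of $\Lambda^\bullet$ on the principal orbit $N_{1,1}$, and then impose $\SU(2)^2$-invariance to cut down to the cohomogeneity-one sections.

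First I would recall that on a nearly Kähler six-manifold $\Omega^2 = \Omega^2_1 \oplus \Omega^2_6 \oplus \Omega^2_8$, where $\Omega^2_1 = \langle \omega \rangle$, $\Omega^2_6 \cong T^*M$ via $X \mapsto X \lrcorner \rho$, and $\Omega^2_8 = \{\beta : \omega\wedge\beta = -{*}\beta,\ \omega\wedge\omega\wedge\beta=0\}$ is the primitive $(1,1)$-part. Using the splitting \eqref{2to3}, with $\omega = \eta\wedge dt + \omega_1$ and $\rho = \omega_2\wedge\eta - \omega_3\wedge dt$, I would write a general invariant 2-form on $(a,b)\times N_{1,1}$ as a $t$-dependent combination of the invariant forms $dt\wedge\eta$, $\omega_0$, $\omega_1$, $\omega_2$, $\omega_3$, $dt\wedge(\nu\lrcorner\omega_i)$ and $\eta\wedge(\nu\lrcorner\omega_i)$ — but since the space of $\SU(2)^2$-invariant forms on $N_{1,1}$ of each degree is finite-dimensional and spanned by the $\eta^{se},\omega^{se}_i$ monomials, in practice the invariant 2-forms are spanned by $dt\wedge\eta$, $\omega_0,\omega_1,\omega_2,\omega_3$, $\eta\wedge(\text{1-forms})$, $dt\wedge(\text{1-forms})$; only finitely many survive invariance. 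Then I impose the two type-8 conditions. The condition $\omega^2\wedge\beta = 0$ kills the $\omega_1$-direction relative to $dt\wedge\eta$ in a fixed ratio and kills $\omega_2,\omega_3$ and the mixed $\eta\wedge(\cdot)$, $dt\wedge(\cdot)$ terms by the $\SU(2)$-rep structure (these lie in $\Omega^2_6$ or fail invariance); and $\omega\wedge\beta = -{*}\beta$ pins the remaining coefficients. Concretely, $\omega_0 \in \Lambda^2_-$ is anti-self-dual inside $\ker\eta$ and orthogonal to $\omega$, so $\omega_0$ is automatically primitive $(1,1)$; and $2\,\eta\wedge dt - \omega_1$ is the combination orthogonal to $\omega = \eta\wedge dt + \omega_1$ that is primitive. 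Checking $\omega\wedge\omega_0 = 0$, $\omega\wedge(2\eta\wedge dt-\omega_1)=0$ and the self-duality signs is a direct computation using relation (i) of Lemma \ref{cohomdiff} is not needed — only the algebraic relations $\eta\wedge\omega_i\wedge\omega_j = 2\delta_{ij}\vol_\Sigma$ and the Hodge-star action on $\Lambda^2_\pm$.

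For $\Omega^3_{12}$, the analogous decomposition is $\Omega^3 = \Omega^3_{1\oplus1} \oplus \Omega^3_6 \oplus \Omega^3_{12}$ with $\Omega^3_{1\oplus1} = \langle \rho, {*}\rho\rangle$, $\Omega^3_6 \cong T^*M$ (via $X\mapsto X\lrcorner(\tfrac12\omega^2)$ or equivalently $X^\flat\wedge\omega$), and $\Omega^3_{12} = \{\gamma : \omega\wedge\gamma = 0,\ \rho\wedge\gamma=0\}$. I would list the invariant 3-forms on $(a,b)\times N_{1,1}$: these are $t$-combinations of $\eta\wedge\omega_i$, $dt\wedge\omega_i$ for $i=0,1,2,3$, plus $\eta\wedge dt\wedge(\text{invariant 1-forms})$ — but the only invariant 1-form is (a multiple of) $\eta^{se}$, so $\eta\wedge dt\wedge\eta = 0$ contributes nothing. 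Thus the invariant 3-forms are spanned by the eight forms $\eta\wedge\omega_i$, $dt\wedge\omega_i$. Imposing $\omega\wedge\gamma = 0$ and $\rho\wedge\gamma = 0$ gives linear constraints: $\omega\wedge(\eta\wedge\omega_i) = \eta\wedge dt\wedge\omega_i \wedge$(nothing, it's degree 5… wait degree works out), more carefully $\omega\wedge\gamma$ lands in $\Lambda^5$ and $\rho\wedge\gamma$ in $\Lambda^6$, and using $\eta\wedge\omega_i\wedge\omega_j = 2\delta_{ij}\vol_\Sigma$ plus $\omega_1 = \omega|_\Sigma$, $\omega_2 + i\omega_3 = \nu\lrcorner({*}\rho - i\rho)$ one reads off that $\eta\wedge\omega_1 + dt\wedge\omega_0$-type and the paired combinations $\eta\wedge\omega_2 + dt\wedge\omega_3$, $\eta\wedge\omega_3 - dt\wedge\omega_2$ are exactly the ones annihilated, while $\eta\wedge\omega_0$ and $dt\wedge\omega_0$ are independently allowed (they are the "$\Lambda^2_-$" directions, automatically killed by wedging with $\omega$ and $\rho$ since $\omega_0 \perp \omega_1, \omega_2, \omega_3$). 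This yields precisely the claimed four-parameter family $f_0\,\eta\wedge\omega_0 + g_0\,dt\wedge\omega_0 + f_2(\eta\wedge\omega_2 + dt\wedge\omega_3) + f_3(\eta\wedge\omega_3 - dt\wedge\omega_2)$.

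The main obstacle is bookkeeping: correctly enumerating the $\SU(2)^2$-invariant forms on $(a,b)\times N_{1,1}$ in each degree and then correctly identifying which linear combinations satisfy the type conditions, given that the moving frame $\omega_i = \mu A\,\omega^{se}_i$ mixes the $se$-forms. The cleanest route is to work entirely intrinsically with the $\SU(2)$-structure $(\eta,\omega_1,\omega_2,\omega_3)$ on the slice (so that the relations $\eta\wedge\omega_i\wedge\omega_j = 2\delta_{ij}\vol_\Sigma$ and $\Lambda^2_\pm$ self-duality hold verbatim), use \eqref{2to3} to express $\omega,\rho,{*}\rho$, and note that $\Omega^2_6$ and $\Omega^3_6$ contribute nothing invariant because $T^*M$ restricted to $N_{1,1}$ has no invariant sections beyond multiples of $\eta^{se}$ and $dt$ — and $dt$ together with $\eta$ are already accounted for in the $\Omega^2_1$, $\Omega^3_{1\oplus1}$ pieces. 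Once that is set up, both identities drop out from pure linear algebra on the finite-dimensional space of invariant forms, with no dependence on the ODE system.
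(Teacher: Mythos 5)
Your overall strategy is the same as the paper's: write the most general invariant form in the moving coframe ($\beta = h_i\,\omega_i + V\,\eta\wedge dt$ and $\gamma = f_i\,\eta\wedge\omega_i + g_i\,dt\wedge\omega_i$, since the only invariant $1$-forms are multiples of $\eta$ and $dt$), impose the algebraic type conditions, and solve the resulting linear system using only $\eta\wedge\omega_i\wedge\omega_j = 2\delta_{ij}\vol_\Sigma$ and the action of the Hodge star on $\Lambda^2_\pm$; no ODE input is needed. Your treatment of $\Omega^2_8$ via $\omega\wedge\beta = -*\beta$ is correct and matches the paper's computation, which yields $h_2=h_3=0$ and $V=-2h_1$.

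For $\Omega^3_{12}$ there is a genuine gap as written, even though you state the correct final answer. Imposing only $\omega\wedge\gamma = 0$ and $\rho\wedge\gamma = 0$ does not cut out the type-$12$ part: since $\rho\wedge\rho = 0$ and $\omega\wedge\rho = 0$, the form $\rho = \eta\wedge\omega_2 - dt\wedge\omega_3$ itself survives both conditions, so they only force $f_1 = g_1 = 0$ and $f_3 = -g_2$, leaving a five-parameter family. The missing relation $f_2 = g_3$, which couples $\eta\wedge\omega_2$ with $dt\wedge\omega_3$ in your parameterisation, is precisely the content of the third condition $\hat{\rho}\wedge\gamma = 0$; the paper's proof imposes it, computing $\rho\wedge\gamma = *(f_3+g_2)$ and $\hat{\rho}\wedge\gamma = *(g_3-f_2)$. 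Relatedly, your intermediate claim that the ``$\eta\wedge\omega_1 + dt\wedge\omega_0$-type'' combinations are ``annihilated'' is incorrect: $\omega\wedge(\eta\wedge\omega_1) = \eta\wedge\omega_1^2 \neq 0$ and $\omega\wedge(dt\wedge\omega_1) = dt\wedge\omega_1^2 \neq 0$, so these are exactly the directions (the invariant part of $\Omega^3_6$) whose coefficients $\omega\wedge\gamma = 0$ forces to vanish, whereas $\eta\wedge\omega_0$ and $dt\wedge\omega_0$ are the ones genuinely killed by wedging with $\omega$, $\rho$ and $\hat{\rho}$. With the third wedge condition added and this bookkeeping corrected, your argument coincides with the paper's proof.
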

\begin{proof}
Let $\beta= h_i(t) \omega_i + V(t)\eta\wedge dt  $ be an arbitrary cohomogeneity one 2-form. The condition that $\beta$ is of type 8 is equivalent to $\omega\wedge \beta = -*\beta$. By direct computation, we have
\begin{subequations}
    \begin{align*}
        *\beta  &= - h_0 \eta \wedge dt \wedge \omega_0 + \frac{V}{2} \omega^2_1+ \sum_{i=1}^3 h_i \eta \wedge dt \wedge \omega_i\\
        \omega \wedge \beta &= h_0  \eta\wedge dt \wedge \omega_0 + \left( h_1 + V\right) \eta\wedge dt\wedge \omega_1 + h_2 \eta\wedge dt \wedge \omega_2 + h_3 \eta\wedge dt \wedge \omega_3 + h_1 \omega^2_1\;, 
    \end{align*}
\end{subequations}
which implies $h_2=h_3=0$ and $V=-2h_1$, as needed. Similarly, let  $\gamma= f_i(t) \eta\wedge \omega_i + g_i dt \wedge \omega_i$ be an arbitrary cohomogeneity one 3-form. The condition that $\gamma$ is of type 12 is equivalent to $\gamma\wedge \omega=0$ and $\gamma \wedge \rho= 0 =\gamma \wedge \hat{\rho}$. Again, by direct computation, we have
$$ \omega \wedge \gamma = f_1 \eta \wedge \omega^2_1 + g_1 dt \wedge \omega^2_1 ~~~~~~~~~~~~ \rho \wedge \gamma =*(f_3+g_2) ~~~~~~~~~~~~  \hat{\rho} \wedge \gamma = *(g_3-f_2)\;, $$
so, $f_1=g_1=0$, $f_2=g_3$ and $f_3=-g_2$, as needed.
\end{proof}
Let us now compute $d\beta$ and $d^*\gamma = -*d* \gamma$.  Using Lemma \ref{cohomdiff}, the defining equations \eqref{NHeq} and the evolution equations \eqref{NHevo}, we can compute the exterior derivative of $\beta$:
\begin{align*}
    d\beta= &~ \partial_t h_0 dt\wedge \omega_0 + h_0(d\omega_0 + dt \wedge \partial_t \omega_0) -\partial_t h_1 dt\wedge \omega_1+ h_1(2 d\eta \wedge dt -d\omega_1 - dt \wedge \partial_t \omega_1)\\
    =&\left( \frac{\partial_t (\mu h_0)}{\mu} + 6 \frac{\lambda w_1}{\mu}h_1\right)dt\wedge \omega_0 - \left( \partial_t h_1 +2h_0 \frac{\lambda w_1}{\mu} + 6 h_1 \frac{\lambda x_1}{\mu}\right)dt\wedge \omega_1 \\&- \left(\frac{3 w_2}{\lambda} h_0 +3 h_1 \right)\left(dt\wedge \omega_3 +\eta \wedge \omega_2\right)\;.
\end{align*}
Similarly, for $\gamma\in\Omega^3_{12}$ of cohomogeneity one, one computes 
$$*\gamma= -f_0 dt\wedge \omega_0 + g_0\eta \wedge \omega_0 + f_2(dt \wedge \omega_2 - \eta \wedge \omega_3) + f_3(dt \wedge \omega_3 + \eta \wedge \omega_2) \;.$$
Again, using Lemma \ref{cohomdiff} and equations \eqref{NHeq} and \eqref{NHevo}, we compute the exterior derivative
\begin{align*}
    d*\gamma =& f_0 dt\wedge d\omega_0 + \partial_t g_0 dt\wedge \eta \wedge \omega_0+ g_0\left(dt \wedge \partial_t(\eta \wedge \omega_0) + d(\eta \wedge \omega_0)\right)\\
    &+\partial_t f_2 \eta\wedge dt\wedge \omega_3 - f_2\left(dt\wedge d\omega_2+ d(\eta \wedge \omega_3) + dt\wedge \partial_t(\eta \wedge \omega_3)\right)\\
    &-\partial_t f_3 \eta\wedge dt\wedge \omega_2 - f_3\left(d(\eta \wedge \omega_2) + dt\wedge \partial_t(\eta \wedge \omega_2)-dt\wedge d\omega_3\right)\\
    =& - \left(\partial_t g_0 + g_0 \partial_t \log(\lambda \mu) +6f_2 \frac{w_2}{\lambda}+3f_3\frac{w_2}{\lambda} \right)\eta\wedge dt \wedge \omega_0\\
    &+\left( 2g_0 \frac{\lambda w_1}{\mu} -2f_2\right)\eta\wedge dt \wedge \omega_1 +\left( 3f_0\frac{w_2}{\lambda} - \partial_t f_3 -6f_3 \frac{y_2}{\lambda}\right)\eta\wedge dt \wedge \omega_2\\
    &+\left( 3g_0 \frac{w_2}{\lambda} + \partial_t f_2 + 6f_2 \frac{y_2}{\lambda}- 3f_3 \frac{y_2}{\lambda}\right)\eta\wedge dt \wedge \omega_3 +\left( 2f_2 -2g_0 \frac{\lambda w_1}{\mu}\right) \omega^2_1 
\end{align*}
Since $\Lambda\neq 0$, we immediately get $f_0=0=f_3$. Thus, the PDE system \eqref{PDE2} reduces to
\begin{subequations} \label{12}
    \begin{align}
        \partial_t (\mu h_0) + 6 h_1 \lambda w_1 &= \frac{\Lambda}{4} \mu g_0 \label{12a}\;,\\
        \partial_t h_1 +2h_0 \frac{\lambda w_1}{\mu} + 6 \frac{\lambda x_1}{\mu}  h_1&=0 \label{12b}\;,\\
        \partial_t (\lambda \mu g_0) +6 \mu w_2 f_2&= -\frac{\Lambda}{3}\lambda \mu h_0 \label{12c}\;,\\
        3\frac{w_2}{\lambda}g_0 +  \partial_t f_2 + 6\frac{y_2}{\lambda} f_2&=0 \label{12d}\;,\\
        3 \frac{w_2}{\lambda} h_0 +3 h_1 + \frac{\Lambda}{4} f_2&=0 \label{12e}\;,\\
        2\frac{\lambda w_1}{\mu}g_0 -2 f_2- \frac{\Lambda}{3}h_1&= 0\;. \label{12f}
    \end{align}
\end{subequations}
We distinguish two separate systems. Equations \eqref{12a}-\eqref{12d} form a first order ODE system for $h_0, h_1, g_0, f_2$ whilst Equations \eqref{12e}-\eqref{12f} are of order zero and linear in $h_1$ and $f_2$. In particular, for $\Lambda \neq \sqrt{72}$, we can rewrite them as
 \begin{subequations} \label{conquant12}
    \begin{align}
        h_1(t) &= \frac{1}{\Lambda^2-72}\left( 72 \frac{w_2}{\lambda} h_0 + 6\Lambda  \frac{\lambda w_1}{\mu} g_0 \right)\;,\\
        f_2(t)&=\frac{-1}{\Lambda^2-72}\left( 12\Lambda \frac{w_2}{\lambda} h_0 + 72  \frac{\lambda w_1}{\mu} g_0 \right)\;. \label{conquant2}
    \end{align}
\end{subequations}
These last two equations are conserved quantities of the ODE system:
\begin{proposition} \label{compatibilityODE-0}
Let $(h_0,h_1, g_0, f_2)$ be a solution to \eqref{12a}-\eqref{12d} such that they satisfy \eqref{12e}-\eqref{12f} for some time $t_0$. Then \eqref{12e}-\eqref{12f} are satisfied for all time that $(h_0,h_1,g_0, f_2)$ is defined.
\end{proposition}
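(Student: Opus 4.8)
The strategy is the standard one for showing an algebraic relation among solutions of an ODE system is a first integral: differentiate the two proposed conserved quantities \eqref{12e} and \eqref{12f} (equivalently, the combinations $E(t) := 3\tfrac{w_2}{\lambda}h_0 + 3h_1 + \tfrac{\Lambda}{4}f_2$ and $F(t) := 2\tfrac{\lambda w_1}{\mu}g_0 - 2f_2 - \tfrac{\Lambda}{3}h_1$) along the flow, substitute the evolution equations \eqref{12a}--\eqref{12d} for $\partial_t h_0, \partial_t h_1, \partial_t g_0, \partial_t f_2$ and the evolution equations for the geometric coefficients $\partial_t\lambda$, $\partial_t\mu$, $\partial_t w_1$, $\partial_t w_2$ supplied by Lemma \ref{cohomdiff} (items (viii), (ix)) and Lemma \ref{cohomdiff2} (equations \eqref{10b}, \eqref{10c}), and check that the resulting expression is a linear combination of $E(t)$ and $F(t)$ themselves, with coefficients that are smooth functions of $t$. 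That is, I would exhibit a $2\times 2$ matrix $P(t)$ of smooth functions with $\partial_t(E,F)^{\mathsf T} = P(t)\,(E,F)^{\mathsf T}$. Once this linear homogeneous ODE for $(E,F)$ is established, the hypothesis $E(t_0)=F(t_0)=0$ forces $(E,F)\equiv 0$ on the whole interval of definition by uniqueness for linear ODEs, which is exactly the claim.

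In more detail: differentiating $E$ gives $\partial_t E = 3\partial_t\!\big(\tfrac{w_2}{\lambda}\big)h_0 + 3\tfrac{w_2}{\lambda}\partial_t h_0 + 3\partial_t h_1 + \tfrac{\Lambda}{4}\partial_t f_2$. The term $\partial_t\!\big(\tfrac{w_2}{\lambda}\big)$ is computed from \eqref{10c} and item (viii) of Lemma \ref{cohomdiff}; $\partial_t h_0$ comes from \eqref{12a} (solving for $\partial_t(\mu h_0)$ and then for $\partial_t h_0$ using $\partial_t\mu = 2\lambda x_1$); $\partial_t h_1$ is \eqref{12b}; $\partial_t f_2$ is \eqref{12d}. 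After substitution, the terms proportional to $g_0$ and to $\tfrac{w_1 w_2}{\mu}h_0$ etc. should reorganise: the genuinely "new" pieces are forced by the structure equations and the conserved quantities $I_i$ of the Foscolo--Haskins system (the vanishing of $I_1,\dots,I_4$) to recombine into multiples of $E$ and $F$. The same computation for $F$ uses \eqref{10b}, item (ix), and \eqref{12c}. It is worth noting that the $\Lambda = \sqrt{72}$ case is not excluded by the proposition — the algebraic manipulation leading to \eqref{conquant12} fails there, but the differentiation argument for $(E,F)$ does not require inverting $\Lambda^2 - 72$, so the proof goes through uniformly.

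The main obstacle is purely computational bookkeeping: there are several auxiliary relations among $w_0,w_1,w_2,x_0,x_1,y_0,y_2,\lambda,\mu$ (orthonormality of the columns of $A\in\SO_0(1,3)$, i.e. $w_1^2 + w_2^2 - w_0^2 = -1$ and the analogues, together with $x_1 = \partial_t\mu/(2\lambda)$, $u_2 = -\lambda\mu$, and the relations encoded in the matrix $A$ of Proposition \ref{241}) that must be invoked at exactly the right moments to collapse the derivative into the span of $\{E,F\}$; getting a spurious leftover term would indicate a sign or transcription slip rather than a genuine failure. One clean way to organise this and avoid the worst of the algebra is to observe that the full system \eqref{12a}--\eqref{12f} is itself equivalent to the first-order system \eqref{PDE2} restricted to cohomogeneity-one forms, and that \eqref{12e}--\eqref{12f} were derived as the \emph{pointwise} (order-zero) part of $d\beta = \tfrac{\Lambda}{4}\gamma$ and $d^*\gamma = \tfrac{\Lambda}{3}\beta$ — namely the $\omega_1^2$-components and one of the $\eta\wedge dt\wedge\omega_1$-components. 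So an alternative, more conceptual route: take $\beta,\gamma$ built from $(h_0,h_1,g_0,f_2)$ solving only the order-one equations \eqref{12a}--\eqref{12d}; then $d\beta - \tfrac{\Lambda}{4}\gamma$ and $d^*\gamma - \tfrac{\Lambda}{3}\beta$ are cohomogeneity-one forms whose \emph{only} possibly-nonzero components are the $\omega_1^2$ (resp.\ $\eta\wedge dt\wedge\omega_1$) ones, i.e. multiples of $E$ and $F$; applying $d$ (resp.\ $d^*$) to these error forms and using $d^2=0$ (resp.\ $(d^*)^2=0$) together with \eqref{12a}--\eqref{12d} produces a closed linear ODE for $E,F$ with no extra input, which is the cleanest way to see why the leftover must vanish identically once it vanishes at one point. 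I would present the direct computation but flag this conceptual reason as the reason one should expect it to work.
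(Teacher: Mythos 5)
Your proposal takes essentially the same route as the paper: differentiate the two order-zero constraints along the flow, substitute \eqref{12a}--\eqref{12d} together with the coefficient evolution equations of Lemmas \ref{cohomdiff} and \ref{cohomdiff2}, show the result is a homogeneous linear ODE for the pair, and conclude by uniqueness from the vanishing at $t_0$. The paper carries out exactly this computation (via Lemma \ref{auxcomputation}) and finds the system even decouples, $\partial_t\eqref{12e}=-2\frac{y_2}{\lambda}\eqref{12e}$ and $\partial_t\eqref{12f}=-3\frac{\lambda x_1}{\mu}\eqref{12f}$, the key cancellation being the row-orthogonality relation $w_1w_2+\lambda x_1-\frac{\mu y_2}{\lambda}=0$ of $A\in\SO_0(1,3)$, which is precisely the auxiliary input you anticipated.
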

First, we state the following technical computation:
\begin{lemma}\label{auxcomputation}
We have
\begin{subequations}
    \begin{align}
        \partial_t \left( \frac{w_2}{\lambda} h_0 \right)= \frac{\Lambda}{4}\frac{w_2}{\lambda}g_0 - 6\frac{w_1 w_2}{\mu} h_1 +2\frac{\lambda w_1}{\mu}h_0 - 6 \frac{y_2 w_2}{\lambda ^2}h_0\;,\\
        \partial_t \left( \frac{\lambda w_1}{\mu} g_0 \right)=-\frac{\Lambda}{3}\frac{\lambda w_1}{\mu} h_0 - 6 \frac{w_1w_2}{\mu}f_2 - 6\frac{\lambda^2 x_1w_1}{\mu^2}g_0 - 3 \frac{w_2}{\lambda}g_0\;.
    \end{align}
\end{subequations}
\end{lemma}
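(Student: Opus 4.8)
The plan is to prove Lemma \ref{auxcomputation} by direct computation, differentiating the two products using the ODE system \eqref{12a}--\eqref{12d} together with the structural evolution equations from Lemma \ref{cohomdiff2}.

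For the first identity, I would start with the Leibniz rule $\partial_t(\tfrac{w_2}{\lambda}h_0) = \partial_t(\tfrac{w_2}{\lambda})\,h_0 + \tfrac{w_2}{\lambda}\,\partial_t h_0$. For the factor $\partial_t(\tfrac{w_2}{\lambda})$, I expand it as $\tfrac{\partial_t w_2}{\lambda} - \tfrac{w_2\,\partial_t\lambda}{\lambda^2}$ and substitute $\partial_t w_2$ from \eqref{10c} and $\partial_t \lambda = 3y_2 - 2\tfrac{\lambda^2}{\mu}x_1$ from Lemma \ref{cohomdiff} (viii). This gives $\partial_t(\tfrac{w_2}{\lambda}) = 2\tfrac{\lambda w_1}{\mu} - 3\tfrac{y_2 w_2}{\lambda^2} - \tfrac{w_2}{\lambda^2}(3y_2 - 2\tfrac{\lambda^2}{\mu}x_1) = 2\tfrac{\lambda w_1}{\mu} - 6\tfrac{y_2 w_2}{\lambda^2} + 2\tfrac{x_1 w_2}{\mu}$. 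For the factor $\partial_t h_0$, I isolate it from \eqref{12a}, writing $\partial_t(\mu h_0) = \mu\,\partial_t h_0 + h_0\,\partial_t\mu$ and using $\partial_t\mu = 2\lambda x_1$ (Lemma \ref{cohomdiff} (ix)), so that $\partial_t h_0 = \tfrac{\Lambda}{4}g_0 - 6\tfrac{\lambda w_1}{\mu}h_1 - 2\tfrac{\lambda x_1}{\mu}h_0$. Multiplying out and collecting terms, the $x_1$-contributions should cancel: the term $\tfrac{w_2}{\lambda}\cdot(-2\tfrac{\lambda x_1}{\mu}h_0) = -2\tfrac{x_1 w_2}{\mu}h_0$ cancels against $2\tfrac{x_1 w_2}{\mu}h_0$ coming from $\partial_t(\tfrac{w_2}{\lambda})\,h_0$, leaving exactly $\tfrac{\Lambda}{4}\tfrac{w_2}{\lambda}g_0 - 6\tfrac{w_1 w_2}{\mu}h_1 + 2\tfrac{\lambda w_1}{\mu}h_0 - 6\tfrac{y_2 w_2}{\lambda^2}h_0$.

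The second identity proceeds analogously: I apply Leibniz to $\partial_t(\tfrac{\lambda w_1}{\mu}g_0)$, compute $\partial_t(\tfrac{\lambda w_1}{\mu})$ using $\partial_t w_1$ from \eqref{10b}, $\partial_t\lambda$ from Lemma \ref{cohomdiff} (viii) and $\partial_t\mu = 2\lambda x_1$, and isolate $\partial_t g_0$ from \eqref{12c} via $\partial_t(\lambda\mu g_0) = \lambda\mu\,\partial_t g_0 + g_0\,\partial_t(\lambda\mu)$ with $\partial_t(\lambda\mu) = \mu\,\partial_t\lambda + \lambda\,\partial_t\mu$. Again I expect the $x_1$ and $y_2$ terms arising from the product rule to partly cancel or recombine, with the $-3\tfrac{\mu}{\lambda^2}w_2$ term in $\partial_t w_1$ producing the cross-term $-6\tfrac{w_1 w_2}{\mu}f_2$ after using \eqref{12f} to rewrite, and the residual terms collapsing to the stated right-hand side $-\tfrac{\Lambda}{3}\tfrac{\lambda w_1}{\mu}h_0 - 6\tfrac{w_1 w_2}{\mu}f_2 - 6\tfrac{\lambda^2 x_1 w_1}{\mu^2}g_0 - 3\tfrac{w_2}{\lambda}g_0$.

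The main obstacle is purely bookkeeping: there are several competing $x_1$- and $y_2$-dependent terms, and one must track the precise coefficients through the substitutions so that the cancellations land exactly as claimed; in particular one has to be careful whether $f_2$ or $h_1$ appears in a given cross-term, which is dictated by which structural equation (\eqref{10b} versus \eqref{10c}) supplies the off-diagonal coefficient. No conceptual difficulty arises beyond organising the algebra, and the identities are precisely designed so that the awkward terms involving $\partial_t\lambda$ and $\partial_t\mu$ disappear; this is what makes them useful for the subsequent proof of Proposition \ref{compatibilityODE-0}, where \eqref{conquant12} will be differentiated and shown to be preserved.
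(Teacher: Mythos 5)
Your proposal is correct and follows essentially the same route as the paper: a direct Leibniz computation substituting \eqref{12a}, \eqref{12c} for the $\partial_t(\mu h_0)$, $\partial_t(\lambda\mu g_0)$ terms and \eqref{10b}, \eqref{10c} together with Lemma \ref{cohomdiff} (viii), (ix) for the frame coefficients, after which the $x_1$- and $y_2$-cross terms cancel exactly as you describe. One small bookkeeping correction: in the second identity the term $-6\frac{w_1 w_2}{\mu}f_2$ comes from the $-6\mu w_2 f_2$ term in \eqref{12c} (no appeal to \eqref{12f} is needed), while the $-3\frac{\mu}{\lambda^2}w_2$ piece of $\partial_t w_1$ produces the $-3\frac{w_2}{\lambda}g_0$ term; with that attribution fixed the computation closes as claimed.
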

\begin{proof}
By direct calculation,
\begin{align*}
    \partial_t \left( \frac{w_2}{\lambda} h_0 \right)&= \frac{w_2}{\lambda \mu} \partial_t(\mu h_0) + h_0 \left( \frac{1}{\lambda}\partial_t w_2 -\frac{w_2}{\lambda^2}\partial_t \lambda -  \frac{w_2}{\lambda \mu} \partial_t \mu \right)\\
    &=\frac{\Lambda}{4}\frac{w_2}{\lambda}g_0 - 6\frac{w_1 w_2}{\mu} h_1 +h_0 \left[\left( 2 \frac{\lambda}{\mu}w_1 - 3\frac{y_2w_2}{\lambda^2}\right)- \frac{w_2}{\lambda^2}\left( 3y_2 -2 \frac{\lambda^2}{\mu}x_1\right)- 2\frac{w_2}{\mu}x_1\right]\\
    &=\frac{\Lambda}{4}\frac{w_2}{\lambda}g_0 - 6\frac{w_1 w_2}{\mu} h_1 +2\frac{\lambda w_1}{\mu}h_0 - 6 \frac{y_2 w_2}{\lambda ^2}h_0\;,    
\end{align*}
where we used Lemmas \ref{cohomdiff} and \ref{cohomdiff2} in the second line. Similarly,
\begin{align*}
    \partial_t \left( \frac{\lambda w_1}{\mu} g_0 \right)&= \frac{w_1}{\mu^2} \partial_t(\lambda \mu g_0) + g_0 \left( \frac{\lambda}{\mu}\partial_t w_1 -2\frac{w_1 \lambda}{\mu^2}\partial_t \mu \right)\\
    &=-\frac{\Lambda}{3}\frac{\lambda w_1}{\mu}h_0 - 6\frac{w_1 w_2}{\mu} f_2 +g_0 \left[\left( -2\frac{\lambda^2 x_1}{\mu^2}w_1 - 3\frac{w_2}{\lambda}\right)- 4\frac{\lambda^2 x_1}{\mu^2}w_1\right]\\
    &=-\frac{\Lambda}{3}\frac{\lambda w_1}{\mu}h_0 - 6\frac{w_1 w_2}{\mu} f_2 - 6\frac{\lambda^2 x_1w_1}{\mu^2}g_0 -3 \frac{w_2}{\lambda}g_0\;. ~ \qedhere    
\end{align*}
\end{proof}
\begin{proof}[Proof of Proposition]
We start with \eqref{12e}. Using the previous lemma and Equations \eqref{12b} and \eqref{12d}, we have
\begin{align*}
    \frac{1}{3}\partial_t \eqref{12e}&= \partial_t\left( \frac{w_2}{\lambda}h_0\right) + \partial_t h_1 + \frac{\Lambda}{12}\partial_t f_2 = -\frac{6}{\mu} (w_1w_2 +\lambda x_1) h_1 - 6\frac{y_2w_2}{\lambda^2}h_0 - \frac{\Lambda}{2}\frac{y_2}{\lambda}f_2\\
    &= -\frac{6}{\mu} \left(w_1w_2 +\lambda x_1 - \frac{\mu y_2}{\lambda}\right) h_1 - 2 \frac{y_2}{\lambda}\eqref{12e} =  - 2  \frac{y_2}{\lambda} \eqref{12e} \;,
\end{align*}
where in the last line, we used that $w_1w_2 +\lambda x_1 - \frac{\mu y_2}{\lambda}=0$ is the inner product of the second and third rows of the matrix $A \in \SO(1,3)$. Similarly, for \eqref{12f}, we have
\begin{align*}
    \frac{1}{2}\partial_t \eqref{12f}&= \partial_t\left( \frac{\lambda w_1}{\mu}g_0\right) - \partial_t f_2 - \frac{\Lambda}{6}\partial_t h_1 = \frac{6}{\mu} (\frac{\mu y_2}{\lambda} - w_1w_2) f_2 - 6\frac{\lambda^2 x_1 w_1}{\lambda^2}h_0 + \Lambda \frac{\lambda x_1}{\mu}h_1\\
    &= \frac{6}{\mu} \left(\frac{\mu y_2}{\lambda} - w_1w_2 -\lambda x_1\right) f_2 -3 \frac{\lambda x_1}{\mu} \eqref{12f} =  - 3 \frac{\lambda x_1}{\mu}\eqref{12f} \;. \qedhere
\end{align*}
\end{proof}
Thus, we can reduce ourselves to study the ODE system for $H=(\mu h_0, \lambda \mu g_0, h_1, f_2)$:
\begin{equation} \label{fundamentalODE}
    \partial_t H = \begin{pmatrix}
        0 & \frac{\Lambda}{4 \lambda} & - 6\lambda w_1 &0\\
        - \frac{\Lambda \lambda}{3} & 0&0 & - 6 \mu w_2 \\
        - 2 \frac{\lambda w_1}{\mu^2} &0 & -6 \frac{\lambda x_1}{\mu}& 0\\
        0& -3 \frac{w_2}{\lambda^2 \mu}&0 & -6 \frac{y_2}{\lambda}
    \end{pmatrix} H
\end{equation}
with suitable initial conditions $H(t_0)$ satisfying \eqref{12e}-\eqref{12f}. To lighten the notation and given the shape of the ODE system \eqref{fundamentalODE}, we make the following change of variables for the remainder of the discussion:
$$ \xi =  \mu h_0 ~~~~~~~ \chi =  \lambda \mu g_0 \;.$$
Our problem is quite similar to the local nearly Kähler problem \eqref{NKsystem}, where now the equations \eqref{12e}-\eqref{12f} play the role of the conserved quantities $I_i(t)=0$. Thus, it raises the question of whether there is a geometric interpretation of these quantities, as in the nearly Kähler case.

To solve \eqref{fundamentalODE}, we follow the same strategy for nearly Kähler structures: We solve \eqref{fundamentalODE} on a nearly Kähler half $\Psi$ and then find suitable matching conditions along maximum volume orbits. 

First, it is instructive to study the limiting case of the sine-cone, Example \ref{sinecone-param}. In this case, the condition $w_1(t)=w_2(t)=0$ and the conserved quantities yield the reduced system 
\begin{equation} \label{Legendre}
    \partial_t \begin{pmatrix}
         \xi \\ \chi 
    \end{pmatrix} =  \begin{pmatrix}
        0 & \frac{\Lambda}{4 \sin(t)}\\ \frac{ \Lambda ~\sin(t)}{3} & 0 
    \end{pmatrix} \begin{pmatrix}
         \xi \\ \chi  
    \end{pmatrix}
\end{equation}
with $h_1=f_2=0$ whenever $\Lambda\neq \sqrt{72}$. When $\Lambda=\sqrt{72}$, the system reduces to the ODE above but now $f_2= -\sqrt{2}h_1(t)= C \sin^6(t)$.

The ODE \eqref{Legendre} is the Legendre Sturm-Liouville problem under the change of variables $u=\sin(t)$. In particular, $\Lambda=2$ and $6$ are eigenvalue solutions to the Sturm-Lioville problem, each with a 2-dimensional eigenspace given by the corresponding Legendre polynomial of the first and second kind. 

Notice that these solutions give rise to $2$-forms $\beta$ solving the PDE problem \eqref{1}, and decaying with rate $-2$, which is precisely the rate we would expect to see if we were trying to construct a solution close to the sine-cone, as it is the rate of harmonic forms on the Stenzel metric on $T^*S^3$ and the small resolution $\mathcal{O}(-1) \oplus \mathcal{O}(-1)$ (cf. \cite[Thm. 2.27]{FH17}).
\begin{remark}
    The value $\Lambda=12$ is also a solution to the Sturm-Liouville problem, corresponding to infinitesimal deformations of the nearly Kähler structure on the sine cone. Foscolo and Haskins used this for their Sturm comparison argument, discussed in Theorem \ref{completeS3S3}.    
\end{remark}
\subsection*{Existence of solutions over Nearly Kähler halves}
We want to solve the ODE system \eqref{fundamentalODE} on the nearly Kähler halves $\Psi_a$ and $\Psi_b$ discussed above. Explicitly, we want the 2-form  
$$\beta =  - h_0 \omega_0 + h_1 \left( \omega_1 -2 \eta \wedge dt \right) = -2\lambda h_1 \eta^{se}\wedge dt + \sum_{i=0}^2 \left( -w_i \xi + u_i h_1\right) \omega_i^{se} $$
to extend smoothly over the singular orbits, where we had set $\xi= \mu h_0$. The smoothness of $\gamma$ will be guaranteed by Equations \eqref{12e} and \eqref{12f}. In virtue of Lemma \ref{smoothextconditions} and the Taylor expansions in Lemmas \ref{taylorpa} and \ref{taylorpb}, we have the following:
 \begin{proposition}\label{smoothextpab}
 The 2-form $\beta$ extends over the singular orbit $\SU(2)^2/U(1)\times \SU(2)$ if and only if $\xi$ is odd and $h_1$ is even. Moreover, equations \eqref{12e} and \eqref{12f} force $f_2$ to be even and $\chi$ to be odd. They have a Taylor expansion in the form 
 $$ \xi= 6At + O(t^3)  ~~~~~~\chi= - A \Lambda t^3+  O(t^5)   ~~~~~~h_1= -\frac{2\sqrt{3}}{3a} A + O(t^2) ~~~~~~f_2= Bt^2+ O(t^4) ~~~~~~ A,B \in \R \;.$$
Similarly, the 2-form $\beta$ extends over the singular orbit $\SU(2)^2/\triangle SU(2)$ if and only if $\xi$ and $h_1$ are even. Equations 
\eqref{12e} and \eqref{12f} imply $\chi$ is odd and $f_2$ is even. Moreover, they have a Taylor expansion of the form 
$$ \xi= A\Lambda t^2 + O(t^4)  ~~~~~~\chi= 8bA t+  O(t^3)   ~~~~~~h_1= Bt^2 + O(t^4) ~~~~~~f_2= \frac{2A}{b}+O(t^2) ~~~~~~ A,B \in \R \;.$$ \end{proposition}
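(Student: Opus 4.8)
The expansion of $\beta$ in the fixed Sasaki--Einstein coframe $\{\eta^{se}\wedge dt,\omega_0^{se},\omega_1^{se},\omega_2^{se},\omega_3^{se}\}$ is already recorded just above the statement, so the five coefficient functions entering Lemma~\ref{smoothextconditions} --- call them $F$ and $G_0,G_1,G_2,G_3$ --- are explicit algebraic expressions in $\xi=\mu h_0$, $h_1$ and the half data $\lambda,\mu,w_i,u_i$, with $G_3\equiv 0$. The plan is to insert these into Lemma~\ref{smoothextconditions}: part (i) for $\Psi_a$, whose singular orbit is $\SU(2)^2/\SU(2)\times\U(1)$, and part (ii) for $\Psi_b$, whose singular orbit is $\SU(2)^2/\triangle\SU(2)$. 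Evaluating the parity clauses of the lemma ($F$ odd and the $G_i$ even in case (i); $G_0,G_1,G_2$ odd and $G_3,F$ even in case (ii)) against the Taylor expansions of $\lambda,\mu,w_i,u_i$ at the singular orbit from Lemmas~\ref{taylorpa} and~\ref{taylorpb} --- in particular against their parities and vanishing orders --- forces $\xi$ and $h_1$ to have exactly the parities claimed, and conversely those parities make $F$ and the $G_i$ have the required parity; this yields the stated ``if and only if''.

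There is nothing extra to check for $\gamma$: by~\eqref{PDE2} we have $\gamma=\tfrac{4}{\Lambda}d\beta$, so $\gamma$ extends smoothly as soon as $\beta$ does. The parities of $f_2$ and $\chi=\lambda\mu g_0$ then follow either by reading off the components of $d\beta$ computed before the statement, or --- equivalently --- by solving the zeroth-order relations~\eqref{12e}--\eqref{12f} for $f_2$ and $g_0$ in terms of $h_0,h_1$ and using the parities of $w_1,w_2,\lambda,\mu$ for the relevant half; either route gives $f_2$ even and $\chi$ odd in both cases.

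It then remains to pin down the leading Taylor coefficients. Having fixed the parities, I would substitute $\xi=c_\xi t^{p}+O(t^{p+2})$, $h_1=c_h t^{q}+O(t^{q+2})$, $f_2=c_f t^{s}+O(t^{s+2})$, $\chi=c_\chi t^{r}+O(t^{r+2})$ into the first-order system~\eqref{12a}--\eqref{12d} (equivalently~\eqref{fundamentalODE}) and into~\eqref{12e}--\eqref{12f}, and match the lowest powers of $t$ using the leading coefficients of $\lambda,\mu,w_1,w_2,x_1,y_2$ from the appendix. Two constants $A$ and $B$ survive this leading-order matching, describing the two-parameter family of smooth germs of $\beta$ at the singular orbit; evaluating~\eqref{12e} (for $\Psi_a$) or~\eqref{12f} (for $\Psi_b$) at $t=0$ then expresses $h_1(0)$, respectively $f_2(0)$, through $A$, and the parameters $a,b$ enter only via the leading coefficients of $\lambda$ and $\mu$, so that one obtains $h_1=-\tfrac{2\sqrt3}{3a}A+O(t^2)$ and $f_2=\tfrac{2A}{b}+O(t^2)$ together with the stated expansions of $\xi$ and $\chi$.

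The step I expect to be the main obstacle is the \emph{second-order} part of Lemma~\ref{smoothextconditions}: the requirement $G_0-G_1=-\partial_tF(0)\,t^2+O(t^4)$ in case (i), and $G_1=2F(0)\,t+O(t^3)$, $G_0+G_2=O(t^3)$, $G_3=O(t^2)$ in case (ii). These are not implied by the parities, so they have to be verified directly. I would do this by differentiating the $G_i$, substituting the first-order equations~\eqref{12a}--\eqref{12b} for $(\xi,h_1)$ together with the nearly Kähler ODE~\eqref{NKsystem} for the half data, and reducing the resulting identities to the $\SO_0(1,3)$ orthogonality relations among the rows and columns of $A$ already used in Lemmas~\ref{cohomdiff} and~\ref{cohomdiff2} --- in effect, reducing them to the analogous second-order conditions for $\omega$ itself, which hold because $\Psi_a$ and $\Psi_b$ are genuine nearly Kähler halves. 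I expect this reduction, rather than any single computation, to be where the care is needed.
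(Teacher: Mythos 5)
Your parity argument (both for $\xi,h_1$ via Lemma~\ref{smoothextconditions} and the appendix expansions, and for $f_2,\chi$ via \eqref{12e}--\eqref{12f}) is exactly the paper's route. The gap is in how you propose to pin down the leading coefficients, and in your last paragraph the logic is backwards. In the paper, the relation $h_1=-\tfrac{2\sqrt3}{3a}A+O(t^2)$ for $\Psi_a$, and the vanishing $\xi,h_1=O(t^2)$ for $\Psi_b$, come precisely from the second-order clauses of Lemma~\ref{smoothextconditions} (namely $G_0-G_1=-\partial_tF(0)t^2+O(t^4)$, resp.\ $G_1=2F(0)t+O(t^3)$ and $G_0+G_2=O(t^3)$) applied to $F=-2\lambda h_1$, $G_i=-w_i\xi+u_ih_1$. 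These clauses are \emph{constraints on the unknowns} $(\xi,h_1)$, not identities: they cannot be ``verified by reducing to the analogous second-order conditions for $\omega$ itself'', because they involve $\xi$ and $h_1$ and are exactly what cuts the admissible germs down to the stated two-parameter form (e.g.\ for $\Psi_a$ they force $h_1(0)=-\tfrac{\sqrt3}{9a}A_\xi$ when $\xi=A_\xi t+O(t^3)$). Treating them as something to be checked after the fact, rather than as the source of the coefficient relations, removes the step that actually produces the claimed expansions; note in particular that for $\Psi_b$ parity alone does not give $\xi,h_1=O(t^2)$.

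Relatedly, evaluating \eqref{12e} at $t=0$ does not by itself express $h_1(0)$ through $A$: at leading order it reads $\tfrac{\sqrt3}{3a}A_\xi+3h_1(0)+\tfrac{\Lambda}{4}f_2(0)=0$, a single linear relation among three unknown leading values. The paper first obtains $h_1(0)$ from the second-order clause and only then uses \eqref{12e}--\eqref{12f} to get $f_2(0)=0$ and the leading coefficient of $\chi$; similarly, for $\Psi_b$ it is \eqref{12e} that yields $f_2(0)=\tfrac{2A}{b}$ once $\xi,h_1=O(t^2)$ is known, and \eqref{12f} that then fixes $\chi=8bAt+O(t^3)$, not the attribution you give. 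Your fallback of matching lowest powers in the full ODE \eqref{12a}--\eqref{12d} could be made to work, but it would force the initial data into the one-dimensional kernel of the singular part $A_{-1}$ (as in the proof of Theorem~\ref{existencesolutionhalf}), thereby also pinning the $t^2$-coefficient you call $B$; so ``two constants survive this leading-order matching'' is not what that computation gives, and it proves a statement about ODE solutions rather than the proposition's statement about arbitrary smooth extensions subject only to \eqref{12e}--\eqref{12f}. The clean proof is the paper's: extract the coefficient relations from Lemma~\ref{smoothextconditions}(i)(b)/(ii)(b) together with the lowest-order terms of \eqref{12e}--\eqref{12f}, without invoking the ODE at all.
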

 \begin{proof}
 First, when the singular orbit is diffeomorphic to $S^2$, the coefficient functions $G_i$ must be even, and $F$ is odd. Since $w_i(t)$ are odd, $\xi$ is odd too. Now, $\lambda$ is odd, so $h_1$ is even, which is compatible with $u_i$ being even. 
 The conditions $G_2(t)=G_3(t)=0$ are immediate from Lemma \ref{taylorpa}. Finally, we have the condition $G_1(t) - G_0(t) = \partial_t F(0) t^2 + O(t^4)$. Let $\xi=  At+ O(t^3)$ and $h_1= B+ O(t^2)$. By the Taylor expansion in Lemma \ref{taylorpa}, this last condition is equivalent to 
 $$ \frac{3}{2} B + \frac{\sqrt{3}}{2a}A = -3B  ~~~~  \implies  ~~~~  B= - \frac{\sqrt{3}}{9a} A\;, $$
 as needed. Now, Equations \eqref{12e} and \eqref{12f} imply the parity of $f_2$ and $\chi$. Let $\chi= Ct^3+ O(t^5) $ and $f_2=D+O(t^2)$, then the first term of the Taylor expansion of \eqref{12e} and \eqref{12f} are, respectively, 
$$  -\frac{\Lambda}{4} D= 3B + \frac{\sqrt{3}}{3a}A = 0 ~~~~~~~~~  
           C= \frac{\Lambda}{6} \frac{\sqrt{3}}{9a}B= - \frac{\Lambda}{6}A\;. $$
 For the $S^3$ case, the coefficient functions $G_i$ must be odd and $F$ even. By the same argument as above, we conclude that $\xi$ and $h_1$ must be even and have Taylor expansions of the form $\xi= At^2 +O(t^4)$ and $h_1=Bt^2+O(t^4)$ for $A, B \in \R$. 

 As before, Equations \eqref{12e} and \eqref{12f} imply the parity and decay of $f_2$ and $\chi$. Let $\chi= Ct+ O(t^3) $ and $f_2=D+O(t^2)$. Then the first term of the Taylor expansion of \eqref{12e} and \eqref{12f} are, respectively, 
$$ -\frac{\Lambda}{4} D= 3 \left( - \frac{1}{6b} \right) A ~~~ \implies ~~~ D =  \frac{2}{\Lambda b} A~~~~~~~~C= 4b^2 D =  \frac{8b}{\Lambda} A\;. \qedhere $$
 \end{proof}
 We solve the ODE \eqref{fundamentalODE} such that these conditions are satisfied. To do so, we use the following technical result.
 \begin{proposition}\label{singODE}
 Consider the singular initial value problem
 \begin{equation}
     \partial_t y(t) =  \frac{1}{t} A_{-1} y(t) +  A(t)y(t) ~~~~~~~ y(0)= y_0\;,
 \end{equation}
 where $y$ takes values in $\R^k$, $A_{-1}$ is a $k\times k$ matrix, and $A(t)$ is a $k\times k$ matrix whose entries depend smoothly on $t$ near $0$.  Then, the problem has a unique smooth solution whenever $y_0$ lies in the cone spanned by eigenvectors of $A_{-1}$ of non-negative eigenvalue. Furthermore, the solution $y(t)$ depends continuously on $A_{-1}$, $A(t)$ and $y_0$. 
 \end{proposition}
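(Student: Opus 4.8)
The plan is to prove this by a contraction-mapping argument at the level of formal power series, following the classical strategy for regular singular points of ODEs (this is the same method used for Lemma \ref{smoothextconditions} and the existence of the nearly Kähler halves). The key structural fact is that if $y(t) = \sum_{n\geq 0} y_n t^n$ is a formal solution, substituting into $\partial_t y = \tfrac{1}{t}A_{-1}y + A(t)y$ and matching the coefficient of $t^{n-1}$ gives the recursion
\begin{equation*}
(nI - A_{-1}) y_n = \sum_{k=0}^{n-1} A_{n-1-k}\, y_k\;,
\end{equation*}
where $A(t) = \sum_{m\geq 0} A_m t^m$. So the formal solution is uniquely determined by $y_0$ \emph{provided} $n$ is never an eigenvalue of $A_{-1}$ for $n \geq 1$; the compatibility condition at $n=0$ is $A_{-1}y_0 = 0$, forcing $y_0 \in \ker A_{-1}$ — wait, more precisely the excerpt asks that $y_0$ lie in the cone spanned by eigenvectors of non-negative eigenvalue, so I first need to reconcile this with the recursion. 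The point is that one decomposes $y_0$ along generalized eigenspaces of $A_{-1}$; on the eigenvalue-$\sigma$ block with $\sigma \geq 0$ an integer, the ansatz must be shifted so the leading term appears at order $t^{\sigma}$ (so that $(\sigma I - A_{-1})$ annihilates it), and for $\sigma$ a non-negative non-integer or $\sigma = 0$ the recursion $(nI - A_{-1})$ is invertible on that block for all $n \geq 1$ (resp. $n\geq 1$). Negative or non-real eigenvalues are excluded precisely because then either the leading order would be negative (non-smooth) or there is a genuine obstruction.

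The steps I would carry out are: (i) reduce to the case where $A_{-1}$ is a single Jordan block with non-negative eigenvalue $\sigma$, by linearity and direct-sum decomposition; (ii) write the shifted ansatz $y(t) = t^{\sigma}\sum_{n\geq 0} y_n t^n$ (with $y_0$ the prescribed data in the relevant eigendirection), derive the recursion $((\sigma+n)I - A_{-1})y_n = \sum_{k<n} A_{n-1-k}y_k$, and observe that $(\sigma+n)I - A_{-1}$ is invertible for all $n \geq 1$ since the only eigenvalue of $A_{-1}$ is $\sigma$; (iii) establish a majorant/geometric-series estimate on $\|y_n\|$ to prove the formal series has positive radius of convergence — this is the standard Cauchy majorant computation, using that the entries of $A(t)$ are bounded by a convergent geometric series near $0$ and that $\|((\sigma+n)I - A_{-1})^{-1}\| = O(1/n)$ for large $n$; (iv) conclude that the convergent series defines a genuine smooth solution on a neighborhood of $0$, and uniqueness follows because any smooth solution has a Taylor expansion which must satisfy the same recursion; (v) for continuous dependence, note that all the $y_n$ are built from $y_0$, $A_{-1}$ and the $A_m$ by rational operations (matrix inversion of $(\sigma+n)I - A_{-1}$, which varies continuously), and the majorant bounds are locally uniform, so the sum depends continuously on the data in, say, the $C^0$ topology on a fixed small interval — a uniform-convergence-of-a-continuous-family argument.

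The main obstacle is step (iii), the convergence estimate, together with getting the bookkeeping in step (ii) exactly right when $A_{-1}$ is non-diagonalizable and $\sigma$ is a positive integer: in that resonant case one must check that the shift by $t^\sigma$ really does kill the obstruction and that no logarithmic terms are forced (they are not, because we only ask for solutions with $y_0$ in the eigenvector cone, not for a full fundamental system). A clean way to handle this uniformly is to treat $A_{-1}$ via its semisimple–nilpotent decomposition and absorb the nilpotent part into $A(t)/t$-type corrections, or simply to invoke the general theory of Fuchsian systems (Coddington–Levinson, Wasow) which gives exactly this statement; but since the paper seems to want a self-contained treatment, I would spell out the majorant argument. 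For continuous dependence I would be slightly careful to state it on a \emph{fixed} interval $[0,\varepsilon]$ with $\varepsilon$ chosen uniformly over a neighborhood of the given data, which the locally uniform majorant bounds provide.
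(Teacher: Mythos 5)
Your proposal takes a genuinely different route from the paper's. The paper exploits linearity directly: it sets $\overline{A}(t)=\tfrac{1}{t}A_{-1}+A(t)$, writes the solution through the exponential of $\int\overline{A}$, and reads off the $t^{\lambda_i}$ behaviour of the components from the (Jordan/diagonal) form, so that smoothness forces the data into the non-negative eigenspaces of $A_{-1}$; continuity is then quoted from standard ODE theory. Your route is the classical Frobenius/Fuchsian one — the recursion $(nI-A_{-1})y_n=\sum_{k<n}A_{n-1-k}y_k$, block decomposition of $A_{-1}$, majorant estimates — which is essentially the general singular-initial-value theory (\cite[Thm.~4.7]{FH17}, Eschenburg--Wang, Coddington--Levinson) that the paper explicitly mentions one could invoke instead of its direct computation. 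Your version is in fact more attentive than the paper's to the resonance issue and to the fact that any smooth solution through $t=0$ forces $A_{-1}y(0)=0$; the paper's ``cone of eigenvectors of non-negative eigenvalue'' is loose on exactly this point, and in the application only $\ker A_{-1}$ matters, the remaining eigenvalues being negative.

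There is, however, one genuine gap relative to the stated hypotheses: $A(t)$ is only assumed to depend \emph{smoothly} on $t$, whereas your convergence step (iii) needs the Taylor coefficients $A_m$ to be dominated by a convergent geometric series, i.e.\ analyticity. For merely smooth coefficients the formal series need not converge, so the majorant argument does not close. The standard fix is the one you name in your opening sentence but never execute: solve the recursion only up to a finite order $N$ larger than every eigenvalue of $A_{-1}$, write $y=p_N(t)+t^N z(t)$, and produce $z$ by a weighted Picard/contraction-mapping argument on a small interval (this is how \cite{FH17} and \cite{EW00} proceed); continuous dependence on $(A_{-1},A,y_0)$ then follows from uniformity of the contraction constant rather than from termwise continuity of the $y_n$. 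Two smaller points: for a positive non-integer eigenvalue $\sigma$ the shifted ansatz $t^{\sigma}(\cdots)$ is not smooth at $0$, so such directions cannot carry smooth solutions at all; and for positive integer eigenvalues the solution you build vanishes at $t=0$ in those directions, so it does not satisfy $y(0)=y_0$, and uniqueness can genuinely fail (e.g.\ $t\,\partial_t y=y$ has the family $y=ct$). These are defects of the proposition's phrasing as much as of your argument, and they are immaterial for the paper's application, where $y_0\in\ker A_{-1}$ and all nonzero eigenvalues of $A_{-1}$ are negative.
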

 \begin{proof}
First, note that one could appeal to the general theory of first-order singular initial value problem (c.f. \cite[Thm. 4.7.]{FH17}). However, we can solve the problem directly since our ODE is linear. 

Let $\overline{A}(t)=\frac{1}{t} A_{-1} + A(t)$, so we can rewrite \eqref{singODE} as $\partial_t y = \overline{A}y$. The solution to this ODE problem is simply $y(t) = \exp{\int^t_{t_0} \overline{A}(t)}y_0$. In the neighbourhood of $0$, $B(t) = \displaystyle \int_{t_0}^t \overline{A}(s) ds$ can be put in Jordan canonical form $B = S^{-1}JS$, so $\exp{\overline{A}(t)}= S^{-1}\exp{J}S$. For simplicity, let us assume that $\overline{A}(t)$ is already diagonalised in a neighbourhood of $0$; the general case follows. Then 
$$y(t) = \exp{\left( \int_{t_0}^t \overline{A}(t) dt \right) } y_0 = \begin{pmatrix}
    t^{\lambda_1} &0 & ...& 0\\
    0 & t^{\lambda_2}&  ...&0 \\
     & & \dots  &  \\
    0 & 0 &0& t^{\lambda_n}    
\end{pmatrix} \exp{\left(\int_{t_0}^t A(t) dt\right)} y_0\;,$$
where $\lambda_i$ are the eigenvalues of ${A_{-1}}$. Since $A(t)$ is smooth near $0$, $y(t)$ will be smooth if and only if $Y(t)= \sum_i t^{\lambda_i} y_0^i$ is; where $y_0^i$ are the coordinates of $y_0$ in the suitable eigenvector basis. But the functions $ t^{\lambda_i}$ are smooth around $0$ only if $\lambda_i \geq 0$. Therefore, picking the initial condition $y_0$ orthogonal to the negative eigenspace of $A_{-1}$ is sufficient for $y(t)$ to be smooth. Continuous dependence on the parameters follows from standard ODE theory. 
\end{proof}
We can now prove the main result of this section.
\begin{theorem} \label{existencesolutionhalf}
Let $a,b>0$ and consider the nearly Kähler halves $\Psi_a$ and $\Psi_b$  with singular orbit $S^2$ and $S^3$ respectively of Foscolo and Haskins \cite{FH17}. Then, for every $\Lambda\in (0,\infty)$, there exists a unique (up to scale) solution to the ODE system \eqref{fundamentalODE} on the nearly Kähler half $\Psi_a$ (resp. $\Psi_b$). Moreover, the solution depends continuously on $a$ (resp. $b$) and $\Lambda$.   
\end{theorem}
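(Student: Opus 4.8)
The plan is to treat \eqref{fundamentalODE} as a linear system whose only singular point is the collapsing orbit at $t=0$, so that the whole statement reduces to a local analysis there. On any compact subinterval $[t_1,t_2]\subset(0,T)$ of the half (with $T$ the time of the maximal-volume orbit) the coefficient matrix of \eqref{fundamentalODE} is smooth, since all the structure functions $\lambda,\mu,w_1,w_2,x_1,y_2$ are smooth and $\lambda,\mu>0$ there; hence by the elementary theory of linear ODEs a solution is determined by its value at any interior point and depends smoothly on that value and on the parameters. So it suffices to show that near $t=0$ the space of solutions of \eqref{fundamentalODE} which give rise to a smooth $2$-form $\beta$ — equivalently, which satisfy the conclusions of Proposition \ref{smoothextpab} — is one-dimensional and depends continuously on $a$ (resp. $b$) and $\Lambda$; the global solution is then obtained by propagating along $(0,T]$, is automatically unique up to scale, and (since the algebraic constraints \eqref{12e}--\eqref{12f} are then in force, see below) yields a smooth $\gamma\in\Omega^3_{12}$ as well.

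For the local analysis I would substitute the Taylor expansions of $\lambda,\mu,w_1,w_2,x_1,y_2$ from Lemma \ref{taylorpa} (for $\Psi_a$), resp. Lemma \ref{taylorpb} (for $\Psi_b$), into \eqref{fundamentalODE}, and rescale each component of $H=(\xi,\chi,h_1,f_2)$ by the power of $t$ to which it must vanish according to Proposition \ref{smoothextpab}: passing to $\widetilde H=(t^{-1}\xi,\,t^{-3}\chi,\,h_1,\,t^{-2}f_2)$ in the $S^2$ case and $\widetilde H=(t^{-2}\xi,\,t^{-1}\chi,\,t^{-2}h_1,\,f_2)$ in the $S^3$ case. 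One then checks that the system takes the Fuchsian form
\[
\partial_t\widetilde H \;=\; \tfrac1t\,A_{-1}\,\widetilde H \;+\; A(t)\,\widetilde H,
\]
with $A(t)$ smooth at $t=0$ and $A_{-1}$ a constant matrix built from the leading Taylor coefficients. Proposition \ref{singODE} then reduces everything to the spectrum of $A_{-1}$: the smooth solutions are precisely those with $\widetilde H(0)$ in the span of the eigenvectors of $A_{-1}$ with non-negative eigenvalue. The content is that this non-negative eigenspace is one-dimensional and that the corresponding eigenvector has exactly the coordinates prescribed by Proposition \ref{smoothextpab} — in particular the forced relation between the free constants $A$ and $B$ there, which is nothing but the constraint \eqref{12e}--\eqref{12f} at $t=0$. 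Hence the resulting solution satisfies \eqref{12e}--\eqref{12f} at one time and therefore, by Proposition \ref{compatibilityODE-0}, for all $t$, so it genuinely solves the full first-order system \eqref{12a}--\eqref{12f} and the eigenvalue problem \eqref{PDE2} on the half.

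Continuous dependence is then essentially formal: the Taylor coefficients entering $A_{-1}$ and $A(t)$ depend continuously on $a$ (resp. $b$) — this is part of the Foscolo--Haskins construction of $\Psi_a,\Psi_b$, itself an instance of the same singular-ODE machinery — and on $\Lambda$, so Proposition \ref{singODE} yields continuous dependence of the local solution, and continuous dependence for the regular linear ODE propagates this along $(0,T]$. I would also remark that the argument is uniform in $\Lambda\in(0,\infty)$: unlike the reduction \eqref{conquant12} of the algebraic constraints, the system \eqref{fundamentalODE} together with Proposition \ref{singODE} makes no reference to the exceptional value $\Lambda=\sqrt{72}$.

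The step I expect to be the main obstacle is the bookkeeping in the local analysis. Because the Foscolo--Haskins halves are not explicit, one must combine the several Taylor expansions together with the structural identities of Lemmas \ref{cohomdiff} and \ref{cohomdiff2} to verify that, after the rescaling dictated by Proposition \ref{smoothextpab}, the system is genuinely Fuchsian (a simple pole, with $A(t)$ smooth rather than merely bounded), to compute the eigenvalues of $A_{-1}$, and to check that exactly one is non-negative with eigenvector matching Proposition \ref{smoothextpab}. The cross-check against Proposition \ref{smoothextpab} is precisely what makes this tractable: its leading coefficients, and the forced relation $B=-\tfrac{\sqrt3}{9a}A$ (resp. its $S^3$ analogue), tell us in advance what the distinguished eigenvector of $A_{-1}$ must be.
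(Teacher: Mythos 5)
Your proposal follows essentially the same route as the paper: the same rescalings $\overline H=(t^{-1}\xi,t^{-3}\chi,h_1,t^{-2}f_2)$ and $(t^{-2}\xi,t^{-1}\chi,t^{-2}h_1,f_2)$ dictated by Proposition \ref{smoothextpab}, reduction to the Fuchsian form handled by Proposition \ref{singODE}, and the observation that everything hinges on the non-negative eigenspace of $A_{-1}$ being one-dimensional, with the paper carrying out the explicit computation you defer (eigenvalues $-6,-4,-3$ plus a one-dimensional kernel in the $S^2$ case, and $-5,-3,-2$ plus a one-dimensional kernel in the $S^3$ case, with kernel vectors matching the expansions of Proposition \ref{smoothextpab}). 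Your outline is correct, and the only missing content is that finite linear-algebra verification.
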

\begin{proof}
We consider the two cases separately.
\begin{itemize}
    \item \textbf{Desingularisation over $S^2$:} In view of Proposition \ref{smoothextpab}, it is useful to consider $\overline{H}= \left(\overline{\xi}, \overline{\chi}, \overline{h_1}, \overline{f_2}\right) = \left(t^{-1}\xi,t^{-3}\chi, h_1, t^{-2}f_2\right)$. Under this reparameterisation, the ODE system \eqref{fundamentalODE} becomes
    \begin{equation}
        \partial_t \overline{H}=
        \begin{pmatrix} -\frac{1}{t} & \frac{\Lambda}{4 \lambda} t^2& - 6\frac{\lambda w_1}{t} &0\\
        - \frac{\Lambda \lambda}{3t^2} & -\frac{3}{t} &0 & - 6\frac{\mu w_2}{t} \\
        - 2 \frac{\lambda w_1}{\mu^2} t &0 & -6 \frac{\lambda x_1}{\mu}& 0\\
        0& -3 \frac{w_2}{\lambda^2 \mu} t&0 & -6 \frac{y_2}{\lambda} -\frac{2}{t}
    \end{pmatrix} H \;.
    \end{equation}   
    Using the Taylor expansions in Lemma \ref{taylorpa}, it is straightforward to check that we are under the hypotheses of Proposition \ref{singODE}, with singular term 
    \begin{equation}
        A_{-1}= 
    \begin{pmatrix} -1 & 0 & -3\sqrt{3}a & 0 \\
                    - \frac{\Lambda}{2} & -3 & 0 & 0\\
                    -\frac{\sqrt{3}}{3a} &0 & -3 & 0\\
                    0 & - \frac{2\sqrt{3}}{9a} & 0 & -6                
    \end{pmatrix}  \;.
    \end{equation}
    The matrix $A_{-1}$ has three distinct negative eigenvalues: $-6,-4$ and $-3$, and a one-dimensional kernel spanned by $\left( 6, -\Lambda, -\frac{2\sqrt{3}}{3a}, \frac{\sqrt{3} \Lambda}{27a}\right)$. 
    \item \textbf{Desingularisation over $S^3$:} As before, we consider $\overline{H}= \left(\overline{\xi}, \overline{\chi}, \overline{h_1}, \overline{f_2}\right) = \left(t^{-2}\xi,t^{-1}\chi, t^{-2}h_1, f_2\right)$. Under this reparameterisation, the ODE system \eqref{fundamentalODE} becomes
    \begin{equation}
        \partial_t \overline{H}=
        \begin{pmatrix} -\frac{2}{t} & \frac{\Lambda}{4 \lambda t}& - 6\lambda w_1&0\\
        - \frac{\Lambda \lambda}{3}t & -\frac{1}{t} &0 & - 6\frac{\mu w_2}{t} \\
        - 2 \frac{\lambda w_1}{\mu^2} &0 & -6 \frac{\lambda x_1}{\mu} - \frac{2}{t}& 0\\
        0& -3 \frac{w_2}{\lambda^2 \mu} t&0 & -6 \frac{y_2}{\lambda}
    \end{pmatrix} H \;.
    \end{equation}   
    Using the Taylor expansions in Lemma \ref{taylorpa}, the singular term is
    \begin{equation}
        A_{-1}= 
    \begin{pmatrix} -2 & \frac{\Lambda}{4b} & 0& 0 \\
                    0 & -1 & 0 & 4b^2\\
                    -\frac{1}{2b} &0 & -5 & 0\\
                    0 & \frac{1}{2b^2} & 0 & -2                
    \end{pmatrix}  \;.
    \end{equation}
    The matrix $A_{-1}$ has three distinct negative eigenvalues: $-5,-3$ and $-2$, and a one dimensional kernel spanned by $\left(\Lambda, 8b, -\frac{ \Lambda}{10 b}, \frac{2}{b}\right)$. \end{itemize}
    By Proposition \ref{singODE}, the statement follows.
\end{proof}
%\begin{remark}
%   Notice that both Proposition \ref{smoothextpab} and Proposition \ref{singODE} impose constraints on the leading order term on the Taylor expansion of $\overline{H}$. It is a priori, and it is not clear whether the. However, in our case, solutions exist because the constraints are consistent.
%\end{remark}
\subsection*{Doubling and matching}
For convenience, we fix the scale parameter to $C=1$ for the remainder of the discussion. We now derive conditions under which our solutions over each nearly Kähler half can be matched along the maximum volume orbit to produce an element for the cohomogeneity one example.  

A pair $(\beta,\gamma)\in \Omega^2_8 \times \Omega^3_{12}$ solving the PDE \eqref{PDE2} is given by 
\begin{align*}
\beta & = -2 \lambda h_1 \eta^{se}\wedge dt + \sum_{i=0}^2  \left( w_i \xi  + u_i h_1 \right)\omega^{se}_i\\
\gamma & = \sum_{i=0}^2 \left( w_i \chi  + v_i f_2 \right) \eta^{se} \wedge \omega^{se}_i - \mu  f_2  dt \wedge\omega^{se}_3 \;.
\end{align*}
Recall from the discussion in Section \ref{cohomsolNK}, the cohomogeneity one complete nearly Kähler structure is constructed by matching a solution $\Psi(t)$ with another solution $\Psi^\pm(t)$, which is defined by a time translation and the appropriate action of the involutions \eqref{involutions}. First, we have
\begin{lemma}\label{coeffinvolutions}
Under the symmetries $\tau_1$ and $\tau_2$ we have 
$$\begin{pmatrix} 
    w_0 & u_0   &  v_0\\
    w_1 & u_1   &  v_1\\
    w_2 & u_2   &  v_2\\
\end{pmatrix}   \xRightarrow{~\tau_1~}  
\begin{pmatrix} 
    w_0 & -u_0   &  v_0\\
    w_1 & -u_1   &  v_1\\
    - w_2 & u_2   & -v_2\\
\end{pmatrix} ~~~~~~~~~~
\begin{pmatrix} 
    w_0 & u_0  &  v_0\\
    w_1 & u_1  &  v_1\\
    w_2 & u_2  &  v_2\\
\end{pmatrix}   \xRightarrow{~\tau_2~}  
\begin{pmatrix} 
    w_0 & u_0   &  -v_0\\
    -w_1 & -u_1 &  v_1\\
    w_2 & u_2   & -v_2\\
\end{pmatrix}\;.$$
\end{lemma}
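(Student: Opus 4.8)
The goal is to track how the coefficient functions of the invariant $\SU(2)$-structure transform under the two involutions $\tau_1$ and $\tau_2$. Recall the matrix $A$ from Proposition~\ref{241},
$$
A= \frac{1}{\lambda\mu^2}
\begin{pmatrix}
u_1v_2-v_1u_2 & \lambda\mu u_0 & 0 & \mu v_0\\
u_0v_2-u_2v_0 & \lambda\mu u_1 & 0 & \mu v_1\\
u_1v_0-v_1u_0 & \lambda\mu u_2 & 0 & \mu v_2\\
0 & 0 & -\lambda\mu^2 & 0
\end{pmatrix}
=
\begin{pmatrix}
w_0 & x_0 & 0 & y_0\\
w_1 & x_1 & 0 & \tfrac{\mu}{\lambda}\\
w_2 & -\lambda & 0 & y_2\\
0 & 0 & -1 & 0
\end{pmatrix},
$$
so that $w_i, x_i, y_i$ are explicit rational expressions in $(\lambda,\underline u,\underline v)$. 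Since the involutions $\tau_1,\tau_2$ of~\eqref{involutions} fix $\lambda$ and $u_2$, and since $\mu=\abs{\underline u}$ and $\mu^2=u_0^2-u_1^2+u_2^2$ (in the $\R^{1,2}$ signature, so $\mu$ depends only on the squares of the $u_i$), both $\mu$ and $\lambda$ are unchanged by $\tau_1$ and $\tau_2$. Hence it suffices to substitute the sign changes dictated by~\eqref{involutions} into the formulas for $w_i$ and to read off the new sign; the $u_i$- and $v_i$-columns transform directly by the definition of $\tau_1,\tau_2$.

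The plan is therefore a direct computation, organised column by column. For $\tau_1$ we have $(u_0,u_1,u_2,v_0,v_1,v_2)\mapsto(-u_0,-u_1,u_2,v_0,v_1,-v_2)$. Then $w_0=\frac{u_1v_2-v_1u_2}{\lambda\mu^2}$: the term $u_1v_2$ gets two sign flips (from $u_1$ and $v_2$) so is unchanged, and $v_1u_2$ is unchanged, giving $w_0\mapsto w_0$. Next $w_1=\frac{u_0v_2-u_2v_0}{\lambda\mu^2}$: $u_0v_2$ has two flips, $u_2v_0$ has none, so $w_1\mapsto w_1$. Then $w_2=\frac{u_1v_0-v_1u_0}{\lambda\mu^2}$: $u_1v_0$ flips once, $v_1u_0$ flips once, so $w_2\mapsto -w_2$. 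The $u$-column $(u_0,u_1,u_2)\mapsto(-u_0,-u_1,u_2)$ and the $v$-column $(v_0,v_1,v_2)\mapsto(v_0,v_1,-v_2)$, matching the first displayed matrix in the statement. For $\tau_2$ we have $(u_0,u_1,u_2,v_0,v_1,v_2)\mapsto(u_0,-u_1,u_2,-v_0,v_1,-v_2)$: then $w_0=\frac{u_1v_2-v_1u_2}{\lambda\mu^2}$ has $u_1v_2$ with two flips and $v_1u_2$ with none, so $w_0\mapsto w_0$; $w_1=\frac{u_0v_2-u_2v_0}{\lambda\mu^2}$ has $u_0v_2$ with one flip and $u_2v_0$ with one flip, so $w_1\mapsto -w_1$; $w_2=\frac{u_1v_0-v_1u_0}{\lambda\mu^2}$ has $u_1v_0$ with two flips and $v_1u_0$ with none, so $w_2\mapsto w_2$. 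The $u$- and $v$-columns transform as $(u_0,-u_1,u_2)$ and $(-v_0,v_1,-v_2)$, matching the second displayed matrix.

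There is essentially no obstacle here; the only point requiring a moment's care is confirming that $\mu$ (and hence the common prefactor $\lambda\mu^2$) is invariant under both involutions, which follows because $\mu$ depends on $u_0,u_1,u_2$ only through $u_0^2, u_1^2, u_2^2$, all of which are manifestly preserved. One should also double-check that the sign conventions in~\eqref{involutions} (which also flip $t\mapsto -t$) do not affect this purely algebraic, pointwise-in-$t$ statement about the coefficient functions — they do not, since the claim concerns the values of $w_i,u_i,v_i$ at a fixed orbit, not their $t$-derivatives. The lemma then follows by assembling the three columns, noting that $\tau_1,\tau_2$ act trivially on $w_0$ in both cases and produce exactly one sign change among $w_1,w_2$, with the pattern complementary between the two involutions.
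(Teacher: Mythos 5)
Your proposal is correct and is exactly the argument the paper intends: the lemma is stated without proof there, being a direct substitution of the sign changes from \eqref{involutions} into the explicit formulas $w_0=\tfrac{u_1v_2-v_1u_2}{\lambda\mu^2}$, $w_1=\tfrac{u_0v_2-u_2v_0}{\lambda\mu^2}$, $w_2=\tfrac{u_1v_0-v_1u_0}{\lambda\mu^2}$, together with the observation that $\lambda$ and $\mu$ (hence the prefactor) are invariant. Your sign bookkeeping matches the stated matrices in all entries, and the remark that the time reversal $t\mapsto -t$ is irrelevant for this pointwise algebraic claim is the right caveat (the precise signature convention for $\abs{\underline{u}}$ is immaterial, as you note, since only the squares $u_i^2$ enter).
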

We want to find conditions for which the pair $(\beta, \gamma)$ can be matched along the maximum volume orbit in the doubling case. We need to understand how a solution $H(t)=(\xi, \chi, h_1, f_2)$ behaves under these symmetries.
\begin{proposition}
    Let $H(t)$ be a solution over a nearly Kähler half $\Psi(t)$ solving \eqref{fundamentalODE} for  $\Lambda\in \R$. Then 
    \begin{enumerate}[label=(\roman*)]
        \item The tuple $H^+(t)=(-\xi, \chi, h_1, f_2)$ is a solution to \eqref{12} over the nearly Kähler half $\Psi^+(t)$.
        \item The tuple $H^-(t)=(\xi, -\chi, h_1, f_2)$ is a solution to \eqref{12} over the nearly Kähler half $\Psi^-(t)$.
    \end{enumerate}
\end{proposition}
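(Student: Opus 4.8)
The plan is to prove both statements by a direct substitution into the system \eqref{12}, using that $\Psi^{+}$ (resp. $\Psi^{-}$) is nothing but $\Psi$ transformed by the involution $\tau_1$ (resp. $\tau_2$) of \eqref{involutions}, up to a time translation. Indeed, comparing the definition of $\Psi^{\pm}_2$ preceding \eqref{Psi_-} with \eqref{involutions} identifies the $+$ case with $\tau_1$ and the $-$ case with $\tau_2$; the accompanying time translation only amounts to re-reading every identity in the reversed variable and is transparent. So the first step is to record how the data entering \eqref{12} changes under these involutions. Using Lemma \ref{coeffinvolutions} together with the identities $x_i=u_i/\mu$ and $y_i=v_i/(\lambda\mu)$ read off from the matrix $A$, one finds: under $\tau_1$ the functions $\lambda,\mu,w_1$ are unchanged while $w_2,x_1,y_2$ change sign; under $\tau_2$ the functions $\lambda,\mu,w_2$ are unchanged while $w_1,x_1,y_2$ change sign; and in both cases $t\mapsto -t$, hence $\partial_t\mapsto -\partial_t$.

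The second step is the substitution. For (i), writing $H^{+}=(\xi^{+},\chi^{+},h_1^{+},f_2^{+})=(-\xi,\chi,h_1,f_2)$ and using that $\mu$ and $\lambda\mu$ are unchanged, this is the replacement $(h_0,g_0,h_1,f_2)\mapsto(-h_0,g_0,h_1,f_2)$; plugging this, the coefficient sign changes above, and $\partial_t\mapsto-\partial_t$ into \eqref{12a}--\eqref{12f}, a term-by-term check shows that \eqref{12a}, \eqref{12c}, \eqref{12e}, \eqref{12f} are reproduced unchanged while \eqref{12b}, \eqref{12d} are reproduced with an overall sign flip, so in each case the new tuple satisfies the equation precisely because $H$ satisfies the corresponding one over $\Psi$. (Alternatively one checks only the evolution equations \eqref{12a}--\eqref{12d} and appeals to Proposition \ref{compatibilityODE-0} for \eqref{12e}--\eqref{12f}, which hold at the matching time by the same cancellation.) Part (ii) is the identical computation with $\tau_2$ in place of $\tau_1$ and $(h_0,g_0,h_1,f_2)\mapsto(h_0,-g_0,h_1,f_2)$: now \eqref{12a}, \eqref{12b}, \eqref{12d} flip sign overall and \eqref{12c}, \eqref{12e}, \eqref{12f} return unchanged.

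There is no substantive obstacle; the content is entirely in the sign bookkeeping, the one point requiring care being that within each equation the sign changes coming from the nearly Kähler data ($w_2$, $x_1$, $y_2$), from the unknown ($\xi$ or $\chi$), and from reversing $t$ must combine into a single overall sign — which they do, equation by equation. A more conceptual and somewhat shorter route, which I would mention but not develop, is to recall from \cite[Prop. 3.11]{FH17} that $\tau_1,\tau_2$ are induced by orientation-reversing symmetries of the cohomogeneity one nearly Kähler structure; the elliptic system \eqref{PDE2} transforms covariantly under such symmetries, so the pullback of $(\beta,\gamma)$ automatically solves \eqref{PDE2} over $\Psi^{\pm}$, and Lemma \ref{coeffinvolutions} together with the induced action on the invariant frame $\eta^{se},\omega^{se}_i$ identifies the coefficient tuple of this pullback with $H^{\pm}$. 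I would present the direct computation, as it is self-contained.
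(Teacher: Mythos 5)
Your proposal is correct and takes essentially the same route as the paper, whose proof is precisely this "straightforward computation": identify $\Psi^{\pm}$ with the action of $\tau_1,\tau_2$ up to time translation, read off the sign changes of $\lambda,\mu,w_1,w_2,x_1,y_2$ from Lemma \ref{coeffinvolutions}, and verify \eqref{12} term by term. The only (immaterial) slip is in the bookkeeping for case (i): equation \eqref{12c} is reproduced with an overall sign flip rather than unchanged, since all three of its terms ($\partial_t\chi$, $6\mu w_2 f_2$, $\tfrac{\Lambda}{3}\lambda\xi$) change sign — which of course still yields the same conclusion.
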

\begin{proof}Straightforward computation. 
\begin{comment}
Claim $(i)$ is obvious by linearity. The rest of the claims study how the system \eqref{fundamentalODE} changes under the corresponding involution. We prove $(iv)$ in detail. By Lemma \ref{coeffinvolutions}, under the symmetry $\tau_1$ and a time translation $t \mapsto T_* - t$ the system \eqref{12} becomes
\begin{subequations}
    \begin{align*}
        -\partial_t \xi + 6 h_1 \lambda w_1 &= \frac{\Lambda}{4\lambda} \chi \;,\\
        -\partial_t h_1 +2 \frac{\lambda w_1}{\mu^2} \xi - 6 \frac{\lambda x_1}{\mu}  h_1&=0\;,\\
        -\partial_t \chi -6 \mu w_2 f_2&= -\frac{\Lambda}{3}\lambda \xi \;,\\
        3\frac{w_2}{\lambda^2 \mu} \chi -  \partial_t f_2 - 6\frac{y_2}{\lambda} f_2&=0 \;,\\
        -3 \frac{w_2}{\lambda \mu} \xi +3 h_1 &=-\frac{\Lambda}{4} f_2\;,\\
        2\frac{w_1}{\mu^2} \chi -2 f_2&= \frac{\Lambda}{3}h_1\;.
    \end{align*}
\end{subequations}
It is straightforward to check that $H^+(t)=(-\xi, \chi, h_1, f_2)$ is a solution to this system.
\end{comment}
\end{proof}
We are now ready to match two solutions in the case of doubling a nearly Kähler half. Let $\beta^+(t)$ (resp. $\gamma^+(t)$) be the image of $\beta$ (resp. $\gamma$) under the symmetry $\tau_1$. Along the maximal volume orbit, i.e., at $t=T_*$,the functions $w_2$, $u_0$, $u_1$ and $v_2$ vanish, and so we have
\begin{align*}
\beta(T_*) & = -2 \lambda h_1 \eta^{se} \wedge dt + (\xi w_0) \omega_0^{se} +(\xi w_1) \omega_1^{se} +(h_1 u_2) \omega_2^{se}\;,\\
\beta^+(T_*)&= -2 \lambda h_1 \eta^{se} \wedge dt - (\xi w_0) \omega_0^{se} -(\xi w_1) \omega_1^{se} +(h_1 u_2) \omega_2^{se}\;,\\
\gamma(T_*) & = (w_0 \chi - v_0 f_2)\eta^{se} \wedge \omega^{se}_0  +(w_1 \chi - v_1 f_2)\eta^{se}  \wedge \omega^{se}_1  -  \mu f_2 dt \wedge \omega^{se}_3  \;, \\
\gamma^+(T_*) &= (w_0 \chi - v_0 f_2) \eta^{se} \wedge \omega^{se}_0 + (w_1 \chi - v_1 f_2) \eta^{se} \wedge \omega^{se}_1  -  \mu f_2 dt \wedge \omega^{se}_3  \;.
\end{align*}
It follows that the equation $(\beta, \gamma)= \alpha(\beta^+, \gamma^+)$ has two non-trivial solutions:
$$ \alpha=1 ~ \implies ~ \xi(T_*)=0 ~~~~~~~~~~~~~~~~~~~~~~ \alpha=-1 ~ \implies ~ \chi(T_*)=h_1(T_*)=f_2(T_*)=0\;.$$
Similarly, let $\beta^-(t)$ and $\gamma^-(t)$ be the images of $\beta$  and $\gamma$ under the involution $\tau_2$. Along the maximal volume orbit, the functions $w_1$, $u_1$, $v_0$ and $v_2$ vanish, and so we have
\begin{align*}
\beta(T_*) & = -2 \lambda h_1 \eta^{se} \wedge dt + (\xi w_0 + u_0 h_1) \omega_0^{se} +(\xi w_2 + u_2 h_1) \omega_1^{se}\;,\\
\beta^-(T_*)&= -2 \lambda h_1 \eta^{se} \wedge dt + (\xi w_0 + u_0 h_1) \omega_0^{se} +(\xi w_2 + u_2 h_1) \omega_1^{se}\;,\\
\gamma(T_*) & = (w_0 \chi) \eta^{se} \wedge \omega^{se}_0  +  (v_1 f_2) \eta^{se} \wedge \omega^{se}_1 +  (w_2 \chi)\eta^{se}  \wedge \omega^{se}_2  -  \mu f_2 dt \wedge \omega^{se}_3  \;, \\
\gamma^-(T_*) &= - (w_0 \chi) \eta^{se} \wedge \omega^{se}_0  +  (v_1 f_2) \eta^{se} \wedge \omega^{se}_1 +  - (w_2 \chi)\eta^{se}  \wedge \omega^{se}_2  -  \mu f_2 dt \wedge \omega^{se}_3 \;.
\end{align*}
As before, the equation $(\beta, \gamma)= \alpha(\beta^-, \gamma^-)$ has two non-trivial solutions:
$$ \alpha=1 ~ \implies ~ \chi(T_*)=0 ~~~~~~~~~~~~~~~~~~~~~~ \alpha=-1 ~ \implies ~ \xi(T_*)=h_1(T_*)=f_2(T_*)=0\;.$$ 
Thus, we have proved
\begin{proposition}\label{doublingS3S3FH}
Let $\Psi_{b^*}$ be the nearly Kähler half corresponding to the inhomogeneous nearly Kähler structure in $S^3\times S^3$ of Foscolo and Haskins. A solution to \eqref{fundamentalODE} extends to the whole $S^3\times S^3$ if and only if $\chi(T_*)=0$ or $\xi(T_*)=h_1(T_*)=f_2(T_*)=0$.
\end{proposition}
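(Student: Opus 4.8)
The plan is to use that near the maximal volume orbit $t=T_*$ the complete manifold is simply the product $N_{1,1}\times(T_*-\epsilon,T_*+\epsilon)$ with no exceptional orbit, so that extending a solution of \eqref{PDE2} across $T_*$ is governed entirely by the behaviour of the regular linear ODE \eqref{fundamentalODE} under the gluing involution. First I would recall from Section \ref{cohomsolNK} that, since $w_1(T_*)=0$ for the inhomogeneous $S^3\times S^3$ structure (Theorem \ref{completeS3S3}, equivalently Proposition \ref{signw1w2}(i) together with the completeness condition), the complete structure is the double \eqref{Psi_-}: it is obtained from $\Psi_{b^*}$ by a time reflection about $T_*$ followed by the involution $\tau_2$.

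From this, the iff reduces to a matching statement at $T_*$. A pair $(\beta,\gamma)$ solving \eqref{PDE2} over $\Psi_{b^*}$ extends to the complete manifold if and only if, on the second half, it glues smoothly at $T_*$ to the solution carried there by the doubling identification. Because $T_*$ is a regular point of \eqref{fundamentalODE}, smooth gluing at $T_*$ is the same as matching of the coefficient functions at $T_*$; and because \eqref{fundamentalODE} is linear and its smooth solution over $\Psi_{b^*}$ is unique up to scale by Theorem \ref{existencesolutionhalf}, the solution on the second half is $\alpha$ times the $\tau_2$-transform $(\beta^-,\gamma^-)$ of $(\beta,\gamma)$ for some scalar $\alpha\in\R$. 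So the task becomes: for which nontrivial $(\beta,\gamma)$ does there exist $\alpha$ with $(\beta,\gamma)(T_*)=\alpha\,(\beta^-,\gamma^-)(T_*)$?

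Next I would evaluate both sides at $T_*$. Using the transformation rule from the proposition above (the solution $H=(\xi,\chi,h_1,f_2)$ is sent by $\tau_2$ to $H^-=(\xi,-\chi,h_1,f_2)$) together with Lemma \ref{coeffinvolutions} for the frame coefficients, and the vanishing of $w_1,u_1,v_0,v_2$ at the maximal volume orbit, one finds $\beta(T_*)=\beta^-(T_*)$ identically, while $\gamma^-(T_*)$ differs from $\gamma(T_*)$ only by a sign flip of the two components containing $\chi$ (those along $\eta^{se}\wedge\omega_0^{se}$ and $\eta^{se}\wedge\omega_2^{se}$), the $f_2$-terms being unchanged. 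Equating $(\beta,\gamma)(T_*)=\alpha(\beta^-,\gamma^-)(T_*)$ component by component then forces $\alpha=\pm1$ for a nontrivial solution: $\alpha=1$ imposes $w_2(T_*)\chi(T_*)=0$, hence $\chi(T_*)=0$ since $w_2(T_*)\neq0$ by Proposition \ref{signw1w2}(ii); and $\alpha=-1$ imposes $h_1(T_*)=0$, $w_2(T_*)\xi(T_*)=0$ and $\mu(T_*)f_2(T_*)=0$, hence $\xi(T_*)=h_1(T_*)=f_2(T_*)=0$ using $w_2(T_*)\neq 0$ and $\mu(T_*)>0$. Conversely, if either set of conditions holds the corresponding $\alpha$ gives a genuine smooth matching, and the glued pair solves \eqref{PDE2} on all of $S^3\times S^3$: smoothness over the two singular orbits is Theorem \ref{existencesolutionhalf} (via Proposition \ref{smoothextpab}), and the algebraic constraints \eqref{12e}--\eqref{12f} persist by Proposition \ref{compatibilityODE-0}.

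The main obstacle is the sign bookkeeping: pinning down the joint action of the time reflection and $\tau_2$ on the frame $\eta^{se},\omega_i^{se}$ and on the solution functions, and confirming that the set of quantities vanishing at $T_*$ is exactly $\{w_1,u_1,v_0,v_2\}$ so that no cross terms survive in $\beta(T_*)$ or $\gamma(T_*)$. Once the correct evaluations at $T_*$ are in hand the determination of $\alpha$ is a short linear computation, with $w_2(T_*)\neq0$ and $\mu(T_*)>0$ the inputs that make the two cases collapse to the stated conditions.
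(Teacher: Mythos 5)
Your proposal is correct and follows essentially the same route as the paper: identify the complete structure as the $\tau_2$-double of $\Psi_{b_*}$, transform the solution $H$ accordingly, evaluate $(\beta,\gamma)$ and $(\beta^-,\gamma^-)$ at $T_*$ using the vanishing of $w_1,u_1,v_0,v_2$ there, and solve $(\beta,\gamma)=\alpha(\beta^-,\gamma^-)$ to get $\alpha=1\Rightarrow\chi(T_*)=0$ and $\alpha=-1\Rightarrow\xi(T_*)=h_1(T_*)=f_2(T_*)=0$, with $w_2(T_*)\neq 0$ and $\mu>0$ as the needed nondegeneracy inputs. Your explicit justification of the "if" direction (gluing across the regular point $T_*$ of the linear ODE, plus uniqueness up to scale from Theorem \ref{existencesolutionhalf}) is left implicit in the paper but is consistent with it.
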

We can simplify the matching conditions by using the conserved quantities \eqref{12e}-\eqref{12f}.
\begin{lemma} \label{rmkchi0-xi0}
Assume that the nearly Kähler half doubles under $\tau_2$, so $w_1(T_*)=0$. Then
    \begin{itemize}
        \item If $h_1 (T_*) =0$, we have $\xi(T_*)=f_2(T_*)=0$.
        \item If $\Lambda \neq 0$ and $f_2(T_*)=0$, we have $\xi(T_*)=h_1(T_*)=0$.
        \item If $\Lambda\neq \sqrt{72}$ and $\xi(T_*)=0$, we have $h_1(T_*)=f_2(T_*)=0$. 
    \end{itemize}
A similar statement holds for the involution $\tau_1$.
\end{lemma}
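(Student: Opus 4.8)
The plan is to read everything off the two zeroth-order relations \eqref{12e}--\eqref{12f} evaluated at the maximal volume orbit. After the change of variables $\xi=\mu h_0$, $\chi=\lambda\mu g_0$ these read
\begin{equation*}
3\,\frac{w_2}{\lambda\mu}\,\xi + 3h_1 + \frac{\Lambda}{4}\,f_2 = 0,
\qquad
2\,\frac{w_1}{\mu^2}\,\chi - 2f_2 - \frac{\Lambda}{3}\,h_1 = 0 .
\end{equation*}
Since the half doubles under $\tau_2$ we have $w_1(T_*)=0$ by Lemma \ref{doublingNK}, so $\chi$ drops out of the second relation at $T_*$ and we are left with two linear equations in the three numbers $\xi(T_*),h_1(T_*),f_2(T_*)$. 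The one input beyond pure algebra is that $w_2(T_*)\neq 0$: if it vanished too, then $w_1(T_*)=w_2(T_*)=0$ and Corollary \ref{246} would force the solution to be the sine cone, which is not smooth over a singular orbit (for $\Psi_{b_*}$ this also follows directly from Proposition \ref{signw1w2}). I expect this to be the only point that needs an argument rather than a computation.

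Then I would carry out the elementary algebra. The second relation at $T_*$ gives $f_2(T_*)=-\tfrac{\Lambda}{6}h_1(T_*)$; substituting into the first and using $\lambda(T_*)\mu(T_*)\neq 0$, $w_2(T_*)\neq 0$ yields
\begin{equation*}
3\,\frac{w_2(T_*)}{\lambda(T_*)\mu(T_*)}\,\xi(T_*) \;=\; \frac{\Lambda^2-72}{24}\,h_1(T_*) .
\end{equation*}
Thus $h_1(T_*)$ determines both $f_2(T_*)$ and $\xi(T_*)$ unconditionally, $f_2(T_*)$ determines $h_1(T_*)$ provided $\Lambda\neq 0$, and $\xi(T_*)$ determines $h_1(T_*)$ provided $\Lambda^2\neq 72$. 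The three bullets are now immediate: $h_1(T_*)=0$ forces the other two to vanish; $f_2(T_*)=0$ with $\Lambda\neq 0$ forces $h_1(T_*)=0$, hence $\xi(T_*)=0$; and $\xi(T_*)=0$ with $\Lambda\neq\sqrt{72}$ forces $h_1(T_*)=0$, hence $f_2(T_*)=0$.

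For the statement under $\tau_1$ one repeats the computation with the roles of the two structure equations exchanged: now $w_2(T_*)=0$ and $w_1(T_*)\neq 0$ (by the same Corollary \ref{246} argument), so the first relation collapses to $3h_1(T_*)+\tfrac{\Lambda}{4}f_2(T_*)=0$ while the second expresses $\chi(T_*)$ linearly in terms of $h_1(T_*)$ and $f_2(T_*)$. This gives $f_2(T_*)=0\Rightarrow h_1(T_*)=\chi(T_*)=0$; $h_1(T_*)=0$ with $\Lambda\neq 0\Rightarrow f_2(T_*)=\chi(T_*)=0$; and $\chi(T_*)=0$ with $\Lambda\neq\sqrt{72}\Rightarrow h_1(T_*)=f_2(T_*)=0$, i.e.\ the matching condition $\chi(T_*)=h_1(T_*)=f_2(T_*)=0$ from the doubling analysis simplifies to $\chi(T_*)=0$. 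No genuine obstacle is expected; the whole argument is a two-by-two linear system once the non-vanishing of the relevant $w_i$ at $T_*$ is secured.
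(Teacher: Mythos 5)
Your argument is correct and is essentially the paper's own proof: evaluate the conserved constraints \eqref{12e}--\eqref{12f} at $T_*$ with $w_1(T_*)=0$, note $w_2(T_*)\neq 0$ via the sine-cone exclusion, and solve the resulting $2\times 2$ linear system, with $\Lambda\neq 0$ and $\Lambda\neq\sqrt{72}$ entering exactly where you place them. The only addition is that you spell out the $\tau_1$ case explicitly, which the paper leaves as ``a similar statement''; your version of it is also correct.
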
 
\begin{proof}
    We only show the details for the case $w_1(T_*)=0$. The constraints \eqref{12e}-\eqref{12f} at $t=T_*$ reduce to 
    \begin{subequations}
      \begin{align}
        3 \frac{w_2}{\lambda \mu} \xi +3 h_1 + \frac{\Lambda}{4} f_2&=0 \;,\\
        -2 f_2- \frac{\Lambda}{3}h_1&= 0\;.
      \end{align}  
    \end{subequations}
If $h_1(T_*)=0$, the claim follows directly, since $w_2(T_*)\neq 0$, as otherwise we would be on the sine-cone. If $\Lambda \neq 0$ and $f_2(T_*)=0$, the second equation implies $h_1(T_*)=0$, and so $\xi(T_*)=0$. Finally, the determinant of the matrix 
$$\begin{pmatrix}
    3 & \frac{\Lambda}{4}\\
    \frac{\Lambda}{3} & 2
\end{pmatrix}$$
is non-zero whenever $\Lambda \neq \sqrt{72}$, and the final claim follows.
\end{proof}
Similarly, one can investigate the matching conditions when the two halves are not isometric, for instance, as is the case for the inhomogeneous $S^6$ case. However, in this case, the conditions will depend on the values of $\Psi(T_*)$ on the maximum volume orbits, which are not explicit in the case of the inhomogeneous nearly Kähler structure on $S^6$. We will not investigate this case further in these notes.

\section{Index of the inhomogeneous $S^3\times S^3$}
We study the Hitchin index of the inhomogeneous nearly Kähler structure on $S^3 \times S^3$ constructed in \cite{FH17} and discussed in Section \ref{cohomsolNK}. We will prove that in the eigenvalue range $\Lambda\in (0, \sqrt{72})$, there exists a unique complete solution to \eqref{fundamentalODE}. In particular, its nearly Kähler Hitchin index is at least one.

We prove this by performing an analysis of the zeros of the functions $\xi(t,\Lambda), \chi(t,\Lambda), h_1(t,\Lambda)$ and $f_2(t,\Lambda)$, in the same spirit to the one used in Proposition \ref{signw1w2}. First, we can exploit the dependence of the conserved quantities on $\Lambda$ when restricted to the maximum volume orbit. We have
\begin{lemma} \label{lemmasigns2}
Let $\Psi_{b_*}$ the nearly Kähler half corresponding to the inhomogeneous nearly Kähler structure, with maximum volume orbit time $T_*$. Then,
\begin{enumerate}[label=(\roman*)]
    \item the functions $h_1(T_*, \Lambda)$ and $f_2(T_*, \Lambda)$ always have opposite signs;
    \item the functions $\xi(T_*, \Lambda)$ and $f_2(T_*, \Lambda)$ have the same signs if $\Lambda< \sqrt{72}$ and have have opposite signs for $\Lambda >\sqrt{72}$.
\end{enumerate}
\end{lemma}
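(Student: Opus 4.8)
The plan is to reduce the statement to evaluating the conserved quantities \eqref{conquant12} on the maximal volume orbit, where the geometry established in Proposition \ref{signw1w2} pins down all signs. Since the inhomogeneous structure is the double of $\Psi_{b_*}$ under the involution $\tau_2$ (Lemma \ref{doublingNK}, Theorem \ref{completeS3S3}), we have $w_1(T_*) = 0$. The relations \eqref{conquant12} are conserved along \eqref{fundamentalODE} by Proposition \ref{compatibilityODE-0}, so I may substitute $w_1(T_*)=0$ into them — legitimate because we stay away from $\Lambda = \sqrt{72}$ — and, writing $\xi = \mu h_0$, the two relations collapse at $t = T_*$ to
\begin{equation*}
    h_1(T_*) = \frac{72}{\Lambda^2 - 72}\, P\, \xi(T_*), \qquad f_2(T_*) = -\frac{12\,\Lambda}{\Lambda^2 - 72}\, P\, \xi(T_*), \qquad P := \frac{w_2(T_*)}{\lambda(T_*)\,\mu(T_*)}.
\end{equation*}

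Everything then follows by inspection. By Proposition \ref{signw1w2}, $w_2(T_*) > 0$, and $\lambda(T_*), \mu(T_*) > 0$ since $T_*$ is an interior orbit, so $P > 0$. Dividing the two identities gives $h_1(T_*)/f_2(T_*) = -6/\Lambda < 0$, which is (i); and $f_2(T_*)/\xi(T_*) = -12\Lambda P/(\Lambda^2 - 72)$ has the sign of $-\Lambda/(\Lambda^2 - 72)$, which is positive for $0 < \Lambda < \sqrt{72}$ and negative for $\Lambda > \sqrt{72}$, which is (ii).

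The point requiring care — and the main obstacle — is the strict non-vanishing of $\xi(T_*,\Lambda)$, equivalently (via the displayed formulas and Lemma \ref{rmkchi0-xi0}) of $h_1(T_*,\Lambda)$ and $f_2(T_*,\Lambda)$; without it the conclusions degenerate to the weak inequalities $h_1(T_*) f_2(T_*) \leq 0$ and so on. If $\xi(T_*,\Lambda) = 0$, the displayed formulas force $h_1(T_*) = f_2(T_*) = 0$, so the half-solution — nontrivial by Theorem \ref{existencesolutionhalf} — satisfies $H(T_*) = (0, \chi(T_*), 0, 0)$ with $\chi(T_*) \neq 0$, and Proposition \ref{doublingS3S3FH} already produces a complete solution on $S^3 \times S^3$. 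To rule this out in the eigenvalue window of interest (or at worst to isolate the finitely many exceptional $\Lambda$), I would normalise the half-solution so that $\xi = A\Lambda t^2 + O(t^4)$ with $A > 0$ near the singular orbit (Proposition \ref{smoothextpab}) and run a zero-counting argument for $\xi$ on $(0, T_*]$, Sturm-comparing with the sine-cone Legendre system \eqref{Legendre}. This comparison is exactly the analysis that underpins the main index theorem, and it is where the genuine work lies; the sign identities above are immediate once it is in hand.
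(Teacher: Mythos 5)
Your core argument is exactly the paper's proof: evaluate the conserved quantities \eqref{12e}--\eqref{12f} (equivalently \eqref{conquant12}) at $t=T_*$ using $w_1(T_*)=0$, obtaining $f_2(T_*)=-\tfrac{\Lambda}{6}h_1(T_*)$ and $\tfrac{w_2}{\lambda\mu}\xi(T_*)=\bigl(\tfrac{\Lambda^2}{72}-1\bigr)h_1(T_*)$, and then read off the signs from $w_2(T_*)>0$ (Proposition \ref{signw1w2}) and $\lambda,\mu>0$. That part is correct and complete.

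Where you go astray is in declaring the strict non-vanishing of $\xi(T_*,\Lambda)$ to be ``the main obstacle'' of this lemma and leaving it as a sketched Sturm-comparison programme. The lemma, both as stated and as it is used downstream, is only the conditional sign relation encoded in the two identities above; simultaneous vanishing is not excluded here and does not need to be. Indeed, the non-vanishing of $\xi(T_*,\Lambda)$ on $(0,\sqrt{72})$ is precisely Proposition \ref{keylemma}(iii) (refined in Proposition \ref{positivexi}), and its proof in the paper \emph{uses} Lemma \ref{lemmasigns2} together with the ODE evaluated on the maximal volume orbit and Lemma \ref{chi0positive} --- not a Legendre comparison. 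So folding that statement into the proof of this lemma is both unnecessary and, if one were to justify it by appeal to the later results, circular. Dropping your final paragraph leaves a correct proof that coincides with the paper's.
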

\begin{proof}
    Since $w_1(T_*)=0$, the conserved quantities on the maximum volume orbit evaluate to 
        \begin{subequations}
            \begin{align}
                \frac{w_2}{\lambda \mu} \xi &= -h_1 -\frac{\Lambda}{12}f_2 = \left( \frac{\Lambda^2}{72} -1 \right) h_1 \label{32a} \;,\\
                f_2 &= -\frac{\Lambda}{6} h_1\;. 
            \end{align}
        \end{subequations}
    Thus, $(i)$ follows. By Proposition \ref{signw1w2} $(ii)$, $w_2(T_*)>0$, and the second claim follows.  
\end{proof}
We can now prove a crucial result that will allow us to prove the main theorem of this section. The key idea is that $\Lambda= \sqrt{72}$ is a degenerate value for the conserved quantities that acts as a barrier.
\begin{proposition}
Let $H(t, \Lambda)=\left(\xi, \chi, h_1, f_2\right)$ be a non-trivial solution to \eqref{fundamentalODE} over $\Psi_{b_*}$.
\label{keylemma}
\begin{enumerate}[label=(\roman*)]
    \item At $\Lambda= \sqrt{72}$, we have $\xi(T_*, \sqrt{72})=0$ and $f_2(T_*, \sqrt{72})\neq 0$.
    \item The zero of $\xi(T_*, \Lambda)$ at $\Lambda= \sqrt{72}$ is transverse. In particular $\chi(T_*, \sqrt{72})\neq 0$.
    \item The function $\xi(T_*, \Lambda)$ has no zeros for $\Lambda\in (0, \sqrt{72})$.
\end{enumerate}
\end{proposition}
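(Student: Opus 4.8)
The plan is to treat the three parts in order, using $\Lambda=\sqrt{72}$ as the degenerate ``barrier'' value alluded to above. For part $(i)$, at $\Lambda=\sqrt{72}$ the algebraic constraints \eqref{12e}--\eqref{12f} no longer determine $h_1,f_2$ in terms of $\xi,\chi$; instead, taking the appropriate linear combination $3\cdot\eqref{12f}+\tfrac{\Lambda}{2}\cdot\eqref{12e}$ (or directly computing $\det\begin{pmatrix}3 & \Lambda/4\\ \Lambda/3 & 2\end{pmatrix}=6-\Lambda^2/12$, which vanishes exactly at $\Lambda=\sqrt{72}$) yields a single constraint relating $\tfrac{w_2}{\lambda\mu}\xi$ to $h_1$ and $f_2$. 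Evaluating on the maximal volume orbit, where $w_1(T_*)=0$, and combining with the relation $f_2=-\tfrac{\Lambda}{6}h_1$ from \eqref{12f}, one finds the coefficient of $h_1$ on the right-hand side of the analogue of \eqref{32a} is $\tfrac{\Lambda^2}{72}-1$, which vanishes at $\Lambda=\sqrt{72}$; hence $\tfrac{w_2}{\lambda\mu}\xi(T_*,\sqrt{72})=0$, and since $w_2(T_*)>0$ by Proposition \ref{signw1w2}$(ii)$, we get $\xi(T_*,\sqrt{72})=0$. If in addition $f_2(T_*,\sqrt{72})=0$ then $h_1(T_*)=0$ too, forcing $\chi(T_*)=0$ from the ODE (or from the vanishing of the whole $4$-tuple on the orbit together with linearity of \eqref{fundamentalODE}), which would make $H\equiv 0$, contradicting non-triviality; so $f_2(T_*,\sqrt{72})\neq 0$.

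\textbf{Transversality.} For part $(ii)$, I would differentiate the eigenvalue problem with respect to $\Lambda$. Write $\partial_\Lambda H=:K$; differentiating \eqref{fundamentalODE} gives an inhomogeneous linear ODE for $K$ whose inhomogeneity involves $H$ and $\partial_\Lambda$ of the coefficient matrix (which only affects the two entries $\tfrac{\Lambda}{4\lambda}$ and $-\tfrac{\Lambda\lambda}{3}$). The goal is to show $\partial_\Lambda\xi(T_*,\Lambda)\neq 0$ at $\Lambda=\sqrt{72}$. The cleanest route is to use the conserved-quantity relation rather than the ODE directly: from $\tfrac{w_2}{\lambda\mu}\xi=\big(\tfrac{\Lambda^2}{72}-1\big)h_1$ valid at $t=T_*$ for all $\Lambda$, differentiate in $\Lambda$ at $\Lambda=\sqrt{72}$; since the bracket vanishes there, we get $\tfrac{w_2}{\lambda\mu}\,\partial_\Lambda\xi(T_*,\sqrt{72})=\tfrac{2\sqrt{72}}{72}h_1(T_*,\sqrt{72})=\tfrac{\sqrt{72}}{36}h_1(T_*,\sqrt{72})$, and by part $(i)$ combined with Lemma \ref{lemmasigns2}$(i)$ we have $h_1(T_*,\sqrt{72})\neq 0$ (it is nonzero precisely because $f_2(T_*,\sqrt{72})\neq 0$). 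Hence $\partial_\Lambda\xi(T_*,\sqrt{72})\neq 0$: the zero is transverse. Finally $\chi(T_*,\sqrt{72})\neq 0$ because if $\chi(T_*)=\xi(T_*)=0$ and (from $\Lambda=\sqrt{72}$, part $(i)$) $h_1,f_2$ are tied by $f_2=-\tfrac{\Lambda}{6}h_1$, one can still have nonzero $(h_1,f_2)$; but the matching/ODE forces $\chi$ to be linked — more carefully, I would argue that $\chi(T_*)=0$ together with $\xi(T_*)=0$ would make $H(T_*)$ lie in the span of $(0,0,h_1,f_2)$ with $f_2=-\tfrac{\Lambda}{6}h_1$, and checking this vector against \eqref{fundamentalODE} shows it is not preserved unless it is zero, contradiction; alternatively and more robustly, transversality of the $\xi$-zero in $\Lambda$ is itself what is usually meant, and I would phrase $(ii)$ so that ``transverse'' refers to $\partial_\Lambda\xi(T_*,\sqrt{72})\neq 0$, deriving $\chi(T_*,\sqrt{72})\neq 0$ as a consequence of the $\Lambda$-derivative of equation \eqref{12a} evaluated at $t=T_*$ (where $w_1=0$): $\partial_\Lambda\xi' = \tfrac{1}{4\lambda}(\chi + \Lambda\,\partial_\Lambda\chi)$, but a shorter path is that the ODE \eqref{fundamentalODE} with $w_1(T_*)=0$ decouples $\xi'$ from $h_1$, giving $\xi'(T_*)=\tfrac{\Lambda}{4\lambda}\chi(T_*)$, so $\xi'(T_*,\sqrt{72})\neq 0$ is equivalent to $\chi(T_*,\sqrt{72})\neq 0$.

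\textbf{No zeros below the barrier.} For part $(iii)$, I would combine a continuity/connectedness argument with the sign information from Lemma \ref{lemmasigns2}. By Theorem \ref{existencesolutionhalf} the solution $H(t,\Lambda)$ depends continuously on $\Lambda$, so $\Lambda\mapsto\xi(T_*,\Lambda)$ is continuous on $(0,\infty)$. Suppose for contradiction it has a zero at some $\Lambda_0\in(0,\sqrt{72})$. At such a zero, equation \eqref{32a} reads $0=\big(\tfrac{\Lambda_0^2}{72}-1\big)h_1(T_*,\Lambda_0)$ with $\tfrac{\Lambda_0^2}{72}-1\neq 0$, so $h_1(T_*,\Lambda_0)=0$ and hence (Lemma \ref{lemmasigns2}$(i)$) $f_2(T_*,\Lambda_0)=0$; then by Lemma \ref{rmkchi0-xi0} all three of $\xi(T_*),h_1(T_*),f_2(T_*)$ vanish, and from \eqref{fundamentalODE} with $w_1(T_*)=0$ the remaining value $\chi(T_*)$ would then have to vanish too (the only surviving equation linking them, $\xi'(T_*)=\tfrac{\Lambda_0}{4\lambda}\chi(T_*)$ together with $\xi(T_*)=0$ being a zero of order forced by $h_1=0$, pins $\chi(T_*)=0$), so $H(T_*,\Lambda_0)=0$ and by uniqueness $H\equiv 0$ — contradicting non-triviality. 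Thus $\xi(T_*,\Lambda)$ has no zero in $(0,\sqrt{72})$. \emph{The main obstacle} I anticipate is the last implication, ruling out $\chi(T_*)=0$: one must be sure that the vanishing of $\xi,h_1,f_2$ on the maximal volume orbit genuinely forces $\chi$ to vanish there rather than merely being consistent with $\chi\neq 0$. This requires looking carefully at the structure of \eqref{fundamentalODE} at $t=T_*$ — where $w_1,w_2$ and the $u_i,v_i$ take their special orbit values — and possibly using that $\xi$ vanishes \emph{to higher order} in $\Lambda$ or $t$ there because $h_1(T_*)=0$ kills the relevant source term; the cleanest formulation is via the conserved quantity \eqref{32a} plus the observation that $(0,0,\xi,\chi)\mapsto$ the four-dimensional solution space is injective by linear independence of solutions, so a solution vanishing at one point vanishes identically.
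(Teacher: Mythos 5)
Your reduction of all three parts to the single implication ``$\xi(T_*)=h_1(T_*)=f_2(T_*)=0$ forces $\chi(T_*)=0$'' is exactly where the argument breaks, and none of the ways you sketch to close it can work. The point value $H(T_*)=(0,\chi,0,0)$ with $\chi\neq 0$ is perfectly consistent with the linear system \eqref{fundamentalODE} and with the constraints \eqref{12e}--\eqref{12f} at $T_*$ (where $w_1(T_*)=0$); in fact this configuration is precisely the second doubling condition of Proposition \ref{doublingS3S3FH}, i.e.\ an a priori admissible matching, not an absurdity. Hence ``linearity plus uniqueness through a point'' proves nothing here, because the solution does not vanish at $T_*$; likewise the remark that the vector $(0,\chi,0,0)$ is ``not preserved'' by the ODE is irrelevant, since a solution is not required to stay on a fixed line. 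What the paper actually uses to exclude this configuration is sign information that your proposal never invokes at this step: at a simultaneous zero of $\xi,h_1,f_2$ at $T_*$, the evaluated system gives $\partial_t\xi(T_*)=\tfrac{\Lambda}{4\lambda}\chi(T_*)$ and $\partial_t f_2(T_*)=-3\tfrac{w_2}{\lambda^2\mu}\chi(T_*)$ (Equations \eqref{SLza} and \eqref{SLzd}), which are \emph{opposite-sign} multiples of $\chi(T_*)$ because $w_2(T_*)>0$ by Proposition \ref{signw1w2}; this is played off against Lemma \ref{lemmasigns2} (for (iii), $\xi(T_*,\Lambda)$ and $f_2(T_*,\Lambda)$ have the same sign throughout $(0,\sqrt{72})$; for (i), the sign of $\xi f_2$ at $T_*$ flips across $\sqrt{72}$, so if both vanished there one of the two zeros would be degenerate) to force $\chi(T_*)=0$, and only then does uniqueness for the linear ODE give $H\equiv 0$. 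You flag this hole yourself as the ``main obstacle''; as written, parts (i) and (iii) are therefore not proved.

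On part (ii): differentiating \eqref{32a} in $\Lambda$ is a nice observation and, granted $h_1(T_*,\sqrt{72})\neq 0$ (which needs part (i)), it does give $\partial_\Lambda\xi(T_*,\sqrt{72})\neq 0$ cleanly, arguably more directly than the paper. But it does not deliver the second assertion of (ii): $\chi(T_*,\sqrt{72})$ is tied by \eqref{SLza} to the $t$-derivative $\partial_t\xi(T_*,\sqrt{72})$, not to the $\Lambda$-derivative you computed, and your write-up slides between the two. In the paper, $\chi(T_*,\sqrt{72})\neq 0$ comes from a second-order computation in $t$ at $T_*$ using \eqref{SLza}--\eqref{SLzb} together with $f_2(T_*,\sqrt{72})\neq 0$; some substitute for that step (or an honest reformulation of ``transverse'') is required before ``in particular $\chi(T_*,\sqrt{72})\neq 0$'' can be claimed, since this nonvanishing is what the existence proof of the complete solution later relies on.
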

\begin{proof}
On the maximum volume orbit, the system \eqref{fundamentalODE} evaluates to 
\begin{subequations}
    \begin{align}
        \partial_t \xi\Bigr|_ {T_*} &= \frac{\Lambda}{4\lambda}\chi \label{SLza}\;,\\
        \partial_t \chi \Bigr|_ {T_*} &= - \frac{\Lambda \lambda}{3}\xi - 6 \mu w_2 f_2\label{SLzb}\;,\\
        \partial_t h_1 \Bigr|_ {T_*} &= 0 \label{SLzc}\;,\\
        \partial_t f_2 \Bigr|_ {T_*} &= -3 \frac{w_2}{\lambda^2 \mu} \chi \label{SLzd}\;,
    \end{align}
\end{subequations}
where all the functions on the right-hand side are evaluated at the maximum volume orbit time. 
    \begin{enumerate}[label=(\roman*)]
        \item By Equation \eqref{32a} we have $\xi(T_*, \sqrt{72})=0$. If $f_2(T_*, \sqrt{72})=0$, one of the two zeros would have to be degenerate by Lemma \ref{keylemma}. If $\xi$ (resp. $f_2$) had a non-transverse zero, Equation \eqref{SLza} (resp. Equation \eqref{SLzd}) would imply $\chi(T_*, \sqrt{72})=0$, so $H(T_*, \sqrt{72})=0$. Since $T_*$ is a smooth point of a linear first-order ODE for $H$, the uniqueness of solutions would force the solution to be trivial, leading to a contradiction.

    \item By the previous item, $\xi(T_*, \Lambda)$ changes sign at $\sqrt{{72}}$. Thus, 
    $\partial_t\xi\Bigr|_ {T_*} =0$  would force $\partial_t^2 \xi\Bigr|_ {T_*} =0$, By Equation \eqref{SLza} and \eqref{SLzb}, we have 
    $$ \partial_t^2 \xi \Bigr|_ {T_*} = \frac{\Lambda}{4\lambda} \partial_t \chi = - \frac{3\Lambda}{2\lambda} \mu w_2 f_2 \neq 0\;. $$

    \item Assume we had $\overline{\Lambda} \in (0, \sqrt{72})$ such that $ \xi(T_*,\overline{\Lambda})=0$. Then, the conserved quantities would force $f_2(T_*,\overline{\Lambda})=h_1(T_*,\overline{\Lambda})=0$. By Lemma \ref{lemmasigns2}, $\xi(T_*, \Lambda)$ and $f_2(T_*, \Lambda)$ have the same sign for $\Lambda \in (0, \sqrt{72})$, the slopes of their zeros must have the same sign. Since $w_2(T_*)>0$ by Proposition \ref{signw1w2} $(ii)$, Equations \eqref{SLza} and \eqref{SLzd} force $\chi(T_*,\overline{\Lambda})=0$. Therefore $H(T_*, \sqrt{72})=0$, which is a contradiction as above.  \qedhere
    \end{enumerate}
\end{proof}
Let us study the behaviour of these functions for small $\Lambda$.  \newpage First, we have 
\begin{lemma} \label{chi0positive}
    The function $\chi(t, \Lambda)$ is strictly positive for $t\in (0, T_*]$ and $\Lambda$ small.
\end{lemma}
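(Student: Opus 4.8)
```latex
\begin{proof}[Proof proposal]
The plan is to use the small-$\Lambda$ limit together with the behaviour at $\Lambda=0$ to propagate positivity across the interval $(0,T_*]$. First I would examine the system \eqref{fundamentalODE} at $\Lambda=0$. Setting $\Lambda=0$ decouples the first two equations: $\partial_t \xi = -6\lambda w_1 h_1$ and $\partial_t \chi = -6\mu w_2 f_2$, while $h_1$ and $f_2$ themselves satisfy the closed homogeneous linear system \eqref{12b},\eqref{12d} (with the zeroth-order constraints \eqref{12e},\eqref{12f} reducing to $3\frac{w_2}{\lambda}h_0+3h_1=0$ and $2\frac{\lambda w_1}{\mu}g_0-2f_2=0$). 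The point is to solve the $\Lambda=0$ problem explicitly enough to see the sign of $\chi$: from the Taylor expansion in Proposition \ref{smoothextpab} for the $S^3$-singular orbit, $\chi = 8bAt + O(t^3)$ with $A\neq 0$, so after fixing the scale $A>0$ we have $\chi>0$ near $t=0$. I would then show $\chi$ cannot vanish on $(0,T_*]$ at $\Lambda=0$: since $f_2 = \frac{2A}{b}+O(t^2)>0$ near $0$ and, using Proposition \ref{signw1w2}, $w_2>0$ on a punctured neighbourhood of $T_*$ (and $w_1>0$ on $(0,T_*)$), the evolution $\partial_t\chi = -6\mu w_2 f_2$ will be controlled once the sign of $f_2$ is pinned down.

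The second step is to establish that $f_2$ (and $h_1$) keep a fixed sign on $(0,T_*]$ at $\Lambda=0$. Since $(h_1,f_2)$ solve the $2\times 2$ linear homogeneous system consisting of \eqref{12b} and \eqref{12d} coupled only through the $w_1,w_2$ coefficients — or, more efficiently, since at $\Lambda=0$ the zeroth-order relations \eqref{12e},\eqref{12f} become $h_1 = -\frac{w_2}{\lambda}h_0$ and $f_2 = \frac{\lambda w_1}{\mu}g_0$, so that the signs of $h_1,f_2$ are governed by those of $h_0, g_0$ together with $w_1,w_2$ — I would track $\xi=\mu h_0$ and $\chi=\lambda\mu g_0$. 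Here $\partial_t\xi = -6\lambda w_1 h_1 = 6\lambda w_1\frac{w_2}{\lambda}h_0 = 6 w_1 w_2 h_0 = 6\frac{w_1 w_2}{\mu}\xi$, a scalar linear ODE, so $\xi$ never changes sign: $\xi>0$ throughout. Similarly $\partial_t\chi = -6\mu w_2 f_2 = -6\mu w_2\frac{\lambda w_1}{\mu}g_0 = -6\lambda w_1 w_2 g_0 = -6\frac{\lambda w_1 w_2}{\mu}\chi$, again scalar linear, so $\chi$ never changes sign at $\Lambda=0$. Combined with $\chi>0$ near $t=0$ this gives $\chi(t,0)>0$ on $(0,T_*]$, with the only subtlety being the behaviour at $t=T_*$ where $w_2(T_*)>0$ by Proposition \ref{signw1w2}(ii), so $w_1 w_2$ has a simple zero there and the scalar ODE still has a well-defined nonvanishing solution up to and including $T_*$.

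The third and final step is a continuity/perturbation argument: by Theorem \ref{existencesolutionhalf} the solution $H(t,\Lambda)$ over $\Psi_{b_*}$ depends continuously on $\Lambda$, uniformly on the compact interval $[0,T_*]$ (one should check the reparameterised $\overline H$ of the proof of Theorem \ref{existencesolutionhalf} depends continuously on $\Lambda$ down to $\Lambda=0$ — the singular matrix $A_{-1}$ and its nonnegative eigenspace vary continuously, and $\Lambda=0$ is not excluded). Since $\chi(\cdot,0)$ is bounded away from zero on any compact subinterval of $(0,T_*]$ — and near $t=0$ the leading Taylor coefficient $8bA$ is continuous and nonzero in $\Lambda$, controlling a neighbourhood of $0$ uniformly — for $\Lambda$ sufficiently small $\chi(t,\Lambda)$ stays strictly positive on all of $(0,T_*]$.

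The main obstacle I anticipate is the endpoint $t=T_*$: there $w_1(T_*)=0$, so the scalar ODEs above degenerate at their right endpoint and one must argue that the sign of $\chi$ is not lost there. The cleanest fix is to note that $T_*$ is an interior smooth point of the (nondegenerate) linear system \eqref{fundamentalODE} for $H$ itself, so $\chi(T_*,\Lambda)$ is just the value of a smooth solution; the degeneracy is only an artifact of the scalar reduction and does not affect the conclusion once one has $\chi>0$ on $(0,T_*)$ and $\chi(T_*,0)\neq 0$, the latter following because $\chi(T_*,0)=0$ together with \eqref{SLzd} at $\Lambda=0$ would force $\partial_t f_2|_{T_*}\neq 0$ contradictions of the type used in Proposition \ref{keylemma} — or simply because at $\Lambda=0$ the scalar ODE $\partial_t\chi = -6\frac{\lambda w_1 w_2}{\mu}\chi$ has solution $\chi(t)=\chi(t_1)\exp\!\big(-6\int_{t_1}^t \frac{\lambda w_1 w_2}{\mu}\big)$ which is manifestly nonzero at $t=T_*$.
\end{proof}
```
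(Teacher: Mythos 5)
Your proposal is correct and follows essentially the same route as the paper: decouple the system at $\Lambda=0$, use the constraints \eqref{12e}--\eqref{12f} to reduce to the scalar linear ODE for $\chi$ (the paper writes it as $\partial_t\chi=-6\tfrac{w_1w_2}{\mu}\chi$; your version carries a spurious extra factor of $\lambda$ from substituting $g_0=\chi/(\lambda\mu)$, which does not affect the sign argument), deduce positivity from the Taylor expansion near $t=0$ and linearity, and conclude for small $\Lambda$ by continuous dependence. Your extra care about uniformity near $t=0$ and at $t=T_*$ is fine but not needed beyond what the paper states.
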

\begin{proof}
    For $\Lambda=0$, the ODE system totally decouples. In particular, $\chi(t, 0)$ is a solution to the singular ODE:        
    \begin{equation}
        \partial_t \chi = -6 \frac{w_1 w_2}{\mu} \chi \label{harmonicS3S3-1}\;.
    \end{equation}
The asymptotics in Proposition \ref{smoothextpab} imply that $\chi(t, 0)> 0$ for any small time. Since Equation \eqref{harmonicS3S3-1} is linear, $\chi(t, 0)>0$ for all time. In particular, since $\chi(t,\Lambda)$ is continuous on $\Lambda$, we have $\chi(t,\Lambda)>0 $ for $\Lambda$ small.     
\end{proof}
We can use this result to refine the last statement of Proposition \ref{keylemma}:
\begin{proposition} \label{positivexi}
    The function $\xi(T_*, \Lambda)$ is strictly positive for $\Lambda \in (0, \sqrt{72})$.
\end{proposition}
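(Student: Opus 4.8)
The plan is to upgrade the "no zeros" conclusion of Proposition \ref{keylemma}(iii) to a strict sign by pinning down the sign for $\Lambda$ near $0$. Since $\xi(T_*,\Lambda)$ is continuous in $\Lambda$ by Theorem \ref{existencesolutionhalf} and, by Proposition \ref{keylemma}(iii), vanishes nowhere on the connected interval $(0,\sqrt{72})$, it has a constant sign there; so it is enough to prove $\xi(T_*,\Lambda)>0$ for all sufficiently small $\Lambda>0$.

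To get the sign near $\Lambda=0$ I would eliminate $h_1$ from the first equation of \eqref{fundamentalODE} using the conserved quantities \eqref{conquant12} (legitimate since $\Lambda\neq\sqrt{72}$). Writing $h_1=\frac{1}{\Lambda^2-72}\big(\frac{72 w_2}{\lambda\mu}\xi+\frac{6\Lambda w_1}{\mu^2}\chi\big)$ and substituting, one obtains the scalar linear ODE
$$\partial_t\xi = \Lambda\Big(\tfrac{1}{4\lambda}+\tfrac{36\,\lambda w_1^2}{\mu^2(72-\Lambda^2)}\Big)\chi \;+\; \tfrac{432\,w_1 w_2}{\mu(72-\Lambda^2)}\,\xi\,,$$
whose key feature is that the coefficient $a(t):=\Lambda\big(\tfrac{1}{4\lambda}+\tfrac{36\lambda w_1^2}{\mu^2(72-\Lambda^2)}\big)$ of $\chi$ is strictly positive for $\Lambda\in(0,\sqrt{72})$ (using $\lambda,\mu>0$), while the sign of the coefficient of $\xi$ plays no role. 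Integrating this ODE from a small $\epsilon>0$ with the obvious integrating factor gives
$$\xi(T_*,\Lambda) = E(\epsilon,T_*)\,\xi(\epsilon,\Lambda) + \int_\epsilon^{T_*} E(s,T_*)\,a(s)\,\chi(s,\Lambda)\,ds\,,\qquad E(s,t)>0\,.$$
By Proposition \ref{smoothextpab}, $\xi(t,\Lambda)=A\Lambda t^2+O(t^4)$ with the scale normalised so that $A>0$ (consistent with $\chi(t,\Lambda)=8bA t+O(t^3)$), hence $\xi(\epsilon,\Lambda)>0$ for $\epsilon$ small; and by Lemma \ref{chi0positive}, for $\Lambda$ small we have $\chi(s,\Lambda)>0$ on all of $(0,T_*]$. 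Both terms on the right are therefore positive, so $\xi(T_*,\Lambda)>0$ for $\Lambda$ small, and the constant-sign argument finishes the proof.

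The one substantive ingredient is the global positivity of $\chi$ on $(0,T_*]$ — without it the integral term could have either sign — but this is precisely Lemma \ref{chi0positive}, which is already available. Everything else is routine: checking $a(t)>0$, the convergence of the integral (one works away from the singular orbit, where $\lambda,\mu>0$), and the positivity of $\xi(\epsilon,\Lambda)$ from the Taylor data; the only care point is fixing the normalisation $C=1$ with the sign that makes the leading Taylor coefficients of $\xi$ and $\chi$ positive, which is forced by compatibility with Lemma \ref{chi0positive}.
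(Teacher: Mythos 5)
Your proof is correct, but it reaches the small-$\Lambda$ sign in a genuinely different way from the paper. You share the same skeleton: reduce to small $\Lambda$ via Proposition \ref{keylemma}(iii) plus continuity in $\Lambda$, and feed in the global positivity of $\chi$ from Lemma \ref{chi0positive}. The paper then argues by contradiction at the level of the full system: assuming $\xi(T_*,\Lambda)<0$, it uses the constraints only at $T_*$ (via Lemma \ref{lemmasigns2}) to get $h_1(T_*,\Lambda)>0$, and tracks the first zeros of $h_1$ and $\xi$ using Equations \eqref{12a}--\eqref{12b}, the positivity $w_1>0$ on $(0,T_*)$ from Proposition \ref{signw1w2}(i), and $\chi>0$, to reach a sign contradiction. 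You instead use the constraint identities \eqref{conquant12} along the whole trajectory to eliminate $h_1$, obtaining the scalar linear ODE $\partial_t\xi=a(t)\chi+b(t)\xi$ with $a(t)=\Lambda\bigl(\tfrac{1}{4\lambda}+\tfrac{36\lambda w_1^2}{\mu^2(72-\Lambda^2)}\bigr)>0$ for $\Lambda\in(0,\sqrt{72})$ (your algebra checks out against \eqref{fundamentalODE}), and conclude by an integrating factor; this is arguably cleaner, since it needs neither the sign of $w_1$ (only $w_1^2\geq 0$) nor Lemma \ref{lemmasigns2}, and in fact yields $\xi(t,\Lambda)>0$ on all of $(0,T_*]$ for small $\Lambda$. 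The trade-off is that you invoke \eqref{12e}--\eqref{12f} at every $t$ rather than only at $T_*$; this is legitimate, since Proposition \ref{compatibilityODE-0} propagates the constraints and the half-solutions of Theorem \ref{existencesolutionhalf} are constructed to satisfy them, but it is worth saying explicitly. Your remaining care points (normalisation with positive leading Taylor coefficient, consistent with Lemma \ref{chi0positive} and with the paper's choice $C=1$; working on $[\epsilon,T_*]$ away from the singular orbit so the integrating factor is well defined) are exactly the right ones and are handled correctly.
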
 
\begin{proof}
In virtue of Proposition \ref{keylemma} $(iii)$, it suffices to prove this for $\Lambda$ small enough. We argue by contradiction. Assume that $\xi(T_*,\Lambda)<0$ for some small $\Lambda$. Lemma \ref{lemmasigns2}  implies $h_1(T_*, \Lambda)>0$. By, the smoothness conditions in Theorem \ref{existencesolutionhalf}, $h_1(t, \Lambda)<0$ for $t$ small enough. In particular, there exists $T< T_*$ for which $h_1(T, \Lambda)=0$. Assume $T$ is the smallest time for which this happens. Thus, $h_1$ has a non-negative slope at $T$. By Equation \eqref{12b}, we have
$$ \partial _t h_1 \Bigr|_ {T} = - 2 \frac{ \lambda w_1}{\mu^2} \xi \geq 0\;,$$
By Proposition \ref{signw1w2} $(i)$, $w_1(t)>0$ for $t\in (0, T_*)$, so $\xi(T) \leq 0$. Again, by the smoothness conditions, $\xi(t, \Lambda)>0$ for $t$ small and so, there exists $\overline{T}\leq T$ such that $\xi(\overline{T}, \Lambda)=0$ and $\partial_t \xi \mid _{\overline{T}} \leq 0$. But Equation \eqref{12a} would imply
$$\partial_t \xi \Bigr|_ {\overline{T}} = \frac{\Lambda}{4\lambda} \chi -  6 \lambda w_1 h_1 >0\;,$$
since $\chi(t, \Lambda)>0$ by the above lemma and $h_1\leq 0$ since $\overline{T}\leq T$, which is a contradiction, and so we must have $\xi(T_*, \Lambda)>0$ for $\Lambda$ small.
\end{proof}
We now have the tools to prove the existence of a complete solution for $\Lambda \in (0, \sqrt{72})$. 
\begin{proposition} On the inhomogeneous nearly Kähler structure in $S^3 \times S^3$ of Foscolo and Haskins, there exists a cohomogeneity one solution to \eqref{PDE2} for $ \Lambda \in (0, \sqrt{72})$.
\end{proposition}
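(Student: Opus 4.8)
The plan is an intermediate value argument for the map $\Lambda\mapsto\chi(T_*,\Lambda)$, where $T_*$ is the maximal volume orbit time of the half $\Psi_{b_*}$ and $H(\cdot,\Lambda)=(\xi,\chi,h_1,f_2)$ denotes the scale-normalised solution of \eqref{fundamentalODE} over $\Psi_{b_*}$ supplied by Theorem \ref{existencesolutionhalf}. By Proposition \ref{doublingS3S3FH} this solution glues to a cohomogeneity one solution of \eqref{PDE2} on all of $S^3\times S^3$ precisely when $\chi(T_*,\Lambda)=0$ or $\xi(T_*,\Lambda)=h_1(T_*,\Lambda)=f_2(T_*,\Lambda)=0$, and by Proposition \ref{positivexi} the second possibility never occurs for $\Lambda\in(0,\sqrt{72})$, since there $\xi(T_*,\Lambda)>0$. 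Hence it suffices to produce $\Lambda_0\in(0,\sqrt{72})$ with $\chi(T_*,\Lambda_0)=0$.

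By Theorem \ref{existencesolutionhalf} the map $\Lambda\mapsto\chi(T_*,\Lambda)$ is continuous, and by Lemma \ref{chi0positive} it is strictly positive for $\Lambda$ near $0$. So it is enough to show that $\chi(T_*,\Lambda)\leq 0$ for some $\Lambda\in(0,\sqrt{72})$; the intermediate value theorem then yields $\Lambda_0$, and the reduction above produces the complete solution. The natural place to look for this sign change is the endpoint $\Lambda=\sqrt{72}$, the degenerate value of the zeroth-order constraints \eqref{12e}-\eqref{12f}. There Proposition \ref{keylemma}(i) and (ii) give $\xi(T_*,\sqrt{72})=0$ and $\chi(T_*,\sqrt{72})\neq 0$; and restricting \eqref{fundamentalODE} to the maximal volume orbit, where $w_1(T_*)=0$, gives $\partial_t\xi|_{T_*}=\frac{\Lambda}{4\lambda}\chi|_{T_*}$, so the sign of $\chi(T_*,\sqrt{72})$ coincides with that of $\partial_t\xi(\cdot,\sqrt{72})|_{T_*}$. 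If $\xi(\cdot,\sqrt{72})>0$ throughout $(0,T_*)$, then $\xi$ decreases to its zero at $T_*$, so $\partial_t\xi|_{T_*}\leq 0$, and since it is nonzero, $\chi(T_*,\sqrt{72})<0$; by continuity $\chi(T_*,\Lambda)<0$ for $\Lambda$ slightly below $\sqrt{72}$, which is the desired sign change.

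The main obstacle is precisely this positivity $\xi(\cdot,\sqrt{72})>0$ on $(0,T_*)$, equivalently the determination of the sign of $\chi(T_*,\sqrt{72})$. I would obtain it by adapting the contradiction scheme of Proposition \ref{positivexi}: a first interior zero of $\xi$, or a first interior sign change of $h_1$, would, through \eqref{12a}-\eqref{12b}, the estimates $w_1>0$ on $(0,T_*)$ and $w_2(T_*)>0$ of Proposition \ref{signw1w2}, and the sign relations of Lemma \ref{lemmasigns2}, be incompatible with the $\Lambda=\sqrt{72}$ specialisation of \eqref{12e}-\eqref{12f}. In that degenerate case the constraints collapse to a single proportionality between $\chi$ and $\xi$ along $(0,T_*)$, from which the sign of $\chi(T_*,\sqrt{72})$ can also be read off directly as $t\to T_*^-$. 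This is the step where the non-explicit nature of the Foscolo--Haskins half makes the problem delicate, so the qualitative input of Proposition \ref{signw1w2} is indispensable; everything else is a soft continuity and intermediate value argument built on results already in hand.
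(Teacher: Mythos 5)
Your scaffolding matches the paper's: reduce via Proposition \ref{doublingS3S3FH} to finding $\Lambda_*\in(0,\sqrt{72})$ with $\chi(T_*,\Lambda_*)=0$, use Lemma \ref{chi0positive} for positivity of $\chi(T_*,\Lambda)$ at small $\Lambda$, establish $\chi(T_*,\sqrt{72})<0$, and conclude by continuity in $\Lambda$ from Theorem \ref{existencesolutionhalf}. Your observation that at $\Lambda=\sqrt{72}$ the constraints \eqref{12e}--\eqref{12f} collapse to a single relation, namely $\lambda w_1\chi+\sqrt{2}\,\mu w_2\,\xi=0$ along the half, is also correct, and your endgame (sign of $\chi(T_*,\sqrt{72})$ via \eqref{SLza} and Proposition \ref{keylemma}(ii)) is valid \emph{given} the positivity of $\xi(\cdot,\sqrt{72})$ on an interval just below $T_*$. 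The genuine gap is exactly the step you flag as the main obstacle: that positivity is nowhere established, and the method you propose for it --- adapting the contradiction scheme of Proposition \ref{positivexi} --- does not transfer to $\Lambda=\sqrt{72}$. That scheme is powered by Lemma \ref{chi0positive}, i.e.\ by $\chi(t,\Lambda)>0$ for all $t$, which is a small-$\Lambda$ statement; at $\Lambda=\sqrt{72}$ it is unavailable and in fact incompatible with what you are trying to prove, since your own collapsed constraint forces $\chi$ and $\xi$ to have opposite signs wherever $w_1,w_2>0$ (the whole point is that $\chi(T_*,\sqrt{72})<0$). Moreover, at a putative first interior zero $T<T_*$ of $\xi(\cdot,\sqrt{72})$ the collapsed constraint only yields $\chi(T)=0$ as well, while $h_1,f_2$ remain unconstrained, so there is no uniqueness-of-solutions contradiction of the type used in Proposition \ref{keylemma}; and reading the sign off "as $t\to T_*^-$" is delicate because both sides of the proportionality vanish at $T_*$ (since $w_1(T_*)=0$ and $\xi(T_*,\sqrt{72})=0$).

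The paper's proof avoids any interior-in-$t$ information at the critical eigenvalue: the sign of $\chi(T_*,\sqrt{72})$ is extracted from the one-parameter family at the fixed time $T_*$, namely from Proposition \ref{positivexi} ($\xi(T_*,\Lambda)>0$ on $(0,\sqrt{72})$), the sign change of $\xi(T_*,\Lambda)$ across $\sqrt{72}$ supplied by Lemma \ref{lemmasigns2} together with $f_2(T_*,\sqrt{72})\neq0$, and the transversality statement of Proposition \ref{keylemma}(ii), converted into the sign of $\partial_t\xi|_{T_*}$, hence of $\chi(T_*,\sqrt{72})$, through \eqref{SLza}. To complete your variant you would either have to prove the interior positivity of $\xi(\cdot,\sqrt{72})$ by a new argument adapted to the reversed sign of $\chi$, or simply replace that step by the $\Lambda$-family argument above, which uses only results you already quote.
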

\begin{proof}
 Since $\xi(T_*, \Lambda)$ is strictly positive for $\Lambda \in (0, \sqrt{72})$, the transverse zero at $\Lambda=\sqrt{72}$ from Proposition \ref{keylemma} $(i)$ must have strictly negative slope, so $\chi(T_*, \sqrt{72})<0$ by Equation \eqref{SLza}. By Lemma \ref{chi0positive}, $\chi(T_*, 0) > 0$, and by continuity on $\Lambda$, there exists $\Lambda_*\in (0, \sqrt{72})$ such that $\chi(T_*, \Lambda_*)=0$.

The doubling conditions in Proposition \ref{doublingS3S3FH} imply that $H(t,\Lambda_*)$ doubles to a solution of the system \eqref{12} on the whole manifold.
 \end{proof}
Our main theorem is now straightforward.
\begin{theorem}\label{thmindS3S3}
    The Hitchin index of the inhomogeneous nearly Kähler structure on $S^3 \times S^3$ is bounded below by 1. The Einstein co-index is bounded below by 4.
\end{theorem}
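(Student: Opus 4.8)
The plan is to obtain the theorem as a short corollary of the analysis carried out above; essentially all of the work has already been done, and the two assertions are pure bookkeeping on top of the preceding proposition.

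\emph{Hitchin index.} By the proposition immediately preceding, there is a value $\Lambda_*\in(0,\sqrt{72})$ for which the cohomogeneity one solution $H(t,\Lambda_*)=(\xi,\chi,h_1,f_2)$ of \eqref{fundamentalODE} over $\Psi_{b_*}$ satisfies $\chi(T_*,\Lambda_*)=0$, and hence (by Proposition \ref{doublingS3S3FH}) doubles to a global solution $(\beta,\gamma)\in\Omega^2_8\times\Omega^3_{12}$ of the first order system \eqref{PDE2} on the complete inhomogeneous nearly Kähler $S^3\times S^3$. First I would record that $\beta\not\equiv 0$: were $\beta$ identically zero, then $\lambda>0$ would force $h_1\equiv 0$, and since the first column $(w_0,w_1,w_2)$ of $A\in\SO_0(1,3)$ never vanishes we would get $\xi\equiv 0$ as well, whence $\chi\equiv f_2\equiv 0$ by \eqref{12a} and \eqref{12e}--\eqref{12f}, contradicting Theorem \ref{existencesolutionhalf}. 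Under the correspondence between \eqref{PDE2} and \eqref{PDE1} (with $\Lambda$ on the positive branch), $\beta$ is then a nonzero element of $\mathcal{E}(\nu_*)$ with $\nu_*=\Lambda_*^2/12\in(0,6)\subseteq(0,12)$. Thus $\mathcal{E}(\nu_*)\neq\{0\}$ contributes (with multiplicity $\dim\mathcal{E}(\nu_*)\geq 1$) to the Hitchin index, which is therefore at least one.

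\emph{Einstein co-index.} Here I would invoke the comparison between the Hitchin index and the Einstein co-index established in \cite{ESF24h}. The relevant fact is that a nonzero eigenspace $\mathcal{E}(\nu)$ with $\nu\in(0,12)$ does not merely contribute $1$ but at least $4$ to the Einstein co-index: a solution of the first order system \eqref{PDE2} packages both the $\Omega^2_8$ datum $\beta$ and the $\Omega^3_{12}$ datum $\gamma$, and producing the associated destabilising transverse--traceless tensors for the Einstein--Hilbert functional---with the nearly Kähler Weitzenböck identities taken into account---yields a negative subspace of dimension at least $4$. Applying this to $\nu_*$ gives that the Einstein co-index is bounded below by $4$.

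\emph{Where the difficulty lies.} The two steps above are essentially mechanical; the substance of the theorem is entirely contained in the results already proved. The point I expect to be---and which has been---the genuine obstacle is establishing that $\chi(T_*,\Lambda)$ actually changes sign on $(0,\sqrt{72})$, so that the doubling condition can be met. This required, simultaneously: the sign $\chi(T_*,0)>0$, from the fully decoupled $\Lambda=0$ equation (Lemma \ref{chi0positive}); the sign $\chi(T_*,\sqrt{72})<0$, from the transversality of the zero of $\xi(T_*,\cdot)$ at $\sqrt{72}$ together with the strict positivity of $\xi(T_*,\Lambda)$ on $(0,\sqrt{72})$ (Propositions \ref{keylemma} and \ref{positivexi}); and continuity in $\Lambda$ (Theorem \ref{existencesolutionhalf}). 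All of these rest in turn on the qualitative information $w_1>0$ on $(0,T_*)$ and $w_2(T_*)>0$ from Proposition \ref{signw1w2}, which is the only place where the non-explicit Foscolo--Haskins structure must be confronted directly.
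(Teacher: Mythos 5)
Your argument for the Hitchin index bound is correct and is exactly the paper's: the preceding proposition gives a non-trivial cohomogeneity one solution of \eqref{PDE2} with $\Lambda_*\in(0,\sqrt{72})$, which under the correspondence with \eqref{PDE1} yields a nonzero coclosed $\beta\in\Omega^2_8$ with $\Delta\beta=\nu_*\beta$, $\nu_*=\Lambda_*^2/12\in(0,6)$, hence index at least one.

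The Einstein co-index part, however, has a genuine gap. You assert that any nonzero eigenspace $\mathcal{E}(\nu)$ with $\nu\in(0,12)$ contributes at least $4$ to the Einstein co-index because the pair $(\beta,\gamma)$ ``yields a negative subspace of dimension at least $4$'' for the Einstein--Hilbert functional; this is not proved, and as a general statement it is false: by Schwahn's formula \cite[Lemma 3.2]{Sch22},
\begin{equation*}
\operatorname{Ind}^{EH} = b^2(M) + b^3(M) + 3 \sum_{\nu \in (0,2)} \dim \mathcal{E}(\nu) + 2 \sum_{\nu \in (2, 6)}\dim \mathcal{E}(\nu) + \sum_{\nu \in (6,12)} \dim \mathcal{E}(\nu)\;,
\end{equation*}
an eigenvalue in $(6,12)$ contributes only with coefficient $1$, and even one in $(0,6)$ contributes with coefficient $2$ or $3$, never $4$. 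Moreover the general comparison from \cite{ESF24h} that you invoke only says the Hitchin index bounds the Einstein co-index from below, which here gives $\geq 1$, not $\geq 4$. The paper's actual route is the displayed formula: for $S^3\times S^3$ one has $b^2=0$ and $b^3=2$, and the eigenvalue $\nu_*\in(0,6)$ contributes at least $2\cdot\dim\mathcal{E}(\nu_*)\geq 2$, so $\operatorname{Ind}^{EH}\geq 2+2=4$. In other words, the factor $4$ is the sum of a topological contribution and the eigenvalue contribution, not a per-eigenspace multiplicity; without citing this (or an equivalent) formula your second claim does not follow from what you have established.
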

\begin{proof}
 Due to the relation between the PDEs \eqref{PDE1} and \eqref{PDE2}, the proposition above implies that there exists a cohomogeneity one 2-form $\beta\in \Omega^2_{8,coclosed}$ solving $\Delta \beta = \nu \beta$ for $\nu \in (0, 6)$. The claim for the bound on the Hitchin index follows.
 
 The Einstein co-index bound follows from the formula of  Schwahn \cite[Lemma 3.2]{Sch22}:
\begin{equation} \label{einstein_index_NK}
     \operatorname{Ind}^{EH} = b^2(M) + b^3(M) + 3 \sum_{\nu \in (0,2)} \dim \; \mathcal{E}(\nu) + 2 \sum_{\nu \in (2, 6)}\dim \; \mathcal{E}(\nu) +  \sum_{\nu \in (6,12)} \dim \; \mathcal{E}(\nu)\;,
 \end{equation}
where $\mathcal{E}(\nu)$ are the corresponding eigenspaces; $\mathcal{E}(\nu) = \left \{ \beta \in \Omega^2_{8} ~ \st  d^* \beta =0 \;, ~ \Delta \beta = \nu \beta \right \}$.
\end{proof}
\section{Outlook}
Let us briefly discuss how one might improve the index estimate above. There are essentially two scenarios that need to be considered:\medskip

\begin{itemize}
    \item \textbf{Other cohomogeneity one solutions:} \medskip
\end{itemize}
First, in the range $\Lambda \in (0, \sqrt{72})$, we know that all cohomogeneity one solutions to \eqref{PDE2} will satisfy $\chi(T_*)=0$, in virtue of Proposition \ref{keylemma} $(iii)$. Moreover, it is not hard to see that $\chi(t, \sqrt{72})$ must have exactly one transverse zero before the maximum volume orbit. Therefore, the only phenomenon that could produce additional solutions to the index problem would be that the zero oscillates around the maximum volume orbit.  
 
Secondly, in our proof of Theorem \ref{thmindS3S3}, it was key that the conserved quantities \eqref{12e}-\eqref{12f} change behaviour at $\Lambda=\sqrt{72}$, effectively allowing us to treat $\sqrt{72}$ as a barrier. The range $(\sqrt{72},\infty)$ has no further changes in behaviour, and there is no clear strategy beyond numerical approximations to study the range $\Lambda \in (\sqrt{72}, 12)$. \medskip
\begin{itemize}
\item \textbf{Non-cohomogeneity one solutions:} \medskip
\end{itemize}
There is no reason one should expect all solutions to be cohomogeneity one. In general, we only know that the isometry group $\SU(2)^2$ acts naturally on the eigenspaces $\mathcal{E}(\nu)$. By Schur's lemma, we can decompose them as a direct sum of irreducible representations $\mathcal{E}(\nu, \rho)$, where $\rho \in \mathbb{N}^2$ is the highest weight of the representation. Cohomogeneity one solutions correspond to $\mathcal{E}(\nu, 0,0)$. 

Since the Laplacian is strongly elliptic, each irreducible representation will have an associated minimum eigenvalue, which we denote by $\nu_\rho$. We ask
\begin{question}
    Can we establish a lower bound for $\nu_\rho$?
\end{question}
One expects $\nu_\rho$ to grow with the norm of $\rho$. We outline a general strategy using the approach of Moroianu and Semmelmann in the homogeneous case \cite{MS10}. By the Weitzenböck formula, we obtain 
$$\Delta \beta =  \nabla^* \nabla  \beta - Q(\omega, \rho) \beta  =  - \partial^2_t \beta - \sum_i \nabla_{e_i} \nabla_{e_i} \beta  - Q(\omega, \rho) \beta =  \nu \beta \;, $$
where $\{e_i\}$ are an orthonormal basis of the tangent space of the fibres $S^2 \times S^3$. 

Each $\SU(2)$-structure on the fibres will be homogeneous under the action of $\SU(2)^2$, though they will not necessarily be naturally reductive in general. Nevertheless, one expects that $\nabla ^* \nabla$ can be related to the Casimir operator of $\SU(2)^2$ and that the remaining terms can be estimated by terms growing at most linearly on $\rho$. Hence, the Casimir operator dominates for large $\rho$. 

Admittedly, this heuristic is somewhat vague. One should get further insight by studying the homogeneous examples, whose spectrums are known by Moroianu and Semmelmann \cite{MS10}, and the sine-cone case, using a separation of variables argument.

\newpage
\appendix

\section{Taylor expansions}
\label{appendix}
\begin{lemma}[\cite{FH17}]\label{taylorpa}
The first few terms of the Taylor expansion of $\Psi_a$ are 
\begin{subequations}
    \begin{align*}
        \lambda(t)=& \frac{3}{2}t - \frac{2a^2+3}{12a^2}t^3+ O(t^5)\;,\\
        \mu(t)=&\sqrt{3}at+ \frac{\sqrt{3}}{9a}(3 - 7a^2)t^3+ O(t^5)\;,\\
        u_0(t)=&a^2-3a^2t^2 +O(t^4)\;,\\
        u_1(t)=&a^2-\frac{3}{2}(2a^2-1)t^2 +O(t^4)\;,\\
        u_2(t)=&-\frac{3 \sqrt{3}}{2}a t^2 + \frac{\sqrt{3}(16a^2-3)}{12a} t^4 +O(t^6)\;,\\
        v_0(t)=&3a^2t^2-\left(\frac{1}{4} +\frac{14}{3}a^2\right)t^4 +O(t^6)\;,\\
        v_1(t)=&3a^2t^2+\left(2-\frac{14}{3}a^2\right)t^4 +O(t^6)\;,\\
        v_2(t)=&\frac{3\sqrt{3}}{2}at^2 - \frac{\sqrt{3}(34a^2-3)}{12a}t^4+O(t^6)\;,\\
        w_0(t)=&\frac{\sqrt{3}}{3} a t^{-1} - \frac{\sqrt{3}}{54 a} \left(64a^2-39\right)t + O(t^3)\;,\\
        w_1(t)=&\frac{\sqrt{3}}{3} a t^{-1} - \frac{2\sqrt{3}}{27 a} \left(16a^2-3\right)t + O(t^3)\;,\\
        w_2(t)=& \frac{1}{2} t + \frac{9-76a^2}{54a^2}t^3 +O(t^5)\;.
    \end{align*}
\end{subequations}
\end{lemma}
\begin{lemma} [\cite{FH17}]\label{taylorpb}
The first few terms of the Taylor expansion of $\Psi_b$ are 
\begin{subequations}
    \begin{align*}
        \lambda(t)=&b- \frac{9}{10} \frac{b^2-1}{b}t^2+ O(t^4)\;,\\
        \mu(t)=&2bt+\frac{1}{10b}t^3 + O(t^5) \;,\\
        u_0(t)=&2b^2t-\frac{1}{5}(17b^2+3)t^3 +O(t^5)\;,\\
        u_1(t)=&2bt -\frac{23b^2-3}{5b} t^3 +O(t^5)\;,\\
        u_2(t)=&-2b^2t+\frac{1}{5}(17b^2-12)t^3 +O(t^5)\;,\\
        v_0(t)=&-\frac{2}{3}b^3+ 4b^3t^2+O(t^4)\;,\\
        v_1(t)=&4b^2t^2+\frac{2}{5}t^4 +O(t^6)\;,\\
        v_2(t)=&\frac{2}{3}b^3- b(4b^2-3)t^2+O(t^4)\;,\\
        w_0(t)=&\frac{b}{3}t^{-1} - \frac{16b^2-29}{15b} t + O(t^3)\;,\\
        w_1(t)=& t + O(t^3)\;,\\
        w_2(t)=&-\frac{b}{3}t^{-1} + \frac{32b^2-13}{30b}t +O(t^3)\;.
    \end{align*}
\end{subequations}
\end{lemma}

\bibliographystyle{alpha}
\bibliography{Bibliography}

\end{document}